\providecommand{\U}[1]{\protect\rule{.1in}{.1in}}
\newtheorem{theorem}{Theorem}
\newtheorem{corollary}[theorem]{Corollary}
\newtheorem{definition}[theorem]{Definition}
\newtheorem{lemma}[theorem]{Lemma}
\newtheorem{proposition}[theorem]{Proposition}
\newtheorem{remark}[theorem]{Remark}
\newenvironment{proof}[1][Proof]{\noindent\textbf{#1.} }{\ \rule{0.5em}{0.5em}}
\begin{document}
	
	\author{V\'{\i}ctor A. Vicente-Ben\'{\i}{}tez\\{\small Instituto de Matemáticas de la U.N.A.M. Campus Juriquilla}\\{\small Boulevard Juriquilla 3001, Juriquilla, Querétaro C.P. 076230 M\'{e}xico }\\{\small  va.vicentebenitez@im.unam.mx, } }
	\title{Bergman spaces for the bicomplex Vekua equation with bounded coefficients}
	\date{}
	\maketitle
\begin{abstract}
We develop the theory for the Bergman spaces of generalized $L_p$-solutions of the bicomplex-Vekua equation $\overline{\boldsymbol{\partial}}W=aW+b\overline{W}$ on bounded domains, where the coefficients $a$ and $b$ are bounded bicomplex-valued functions. We study the completeness of the Bergman space, the regularity of the solutions and the boundedness of the evaluation functional. For the case $p=2$, the existence of a reproducing kernel is established, along with a representation of the orthogonal projection onto the Bergman space in terms of the obtained reproducing kernel, and an explicit expression for the orthogonal complement. Additionally, we analyze the main Vekua equation ($a=0$, $b = \frac{\overline{\boldsymbol{\partial}}f}{f}$ with $f$ being a non-vanishing complex-valued function). Results concerning its relationship with a pair of conductivity equations, the construction of metaharmonic conjugates, and the Runge property are presented.
\end{abstract}

\textbf{Keywords: } Bicomplex Vekua equation, pseudoanalytic functions, Bergman spaces, Bergman kernel, conductivity equation. \newline

\textbf{MSC Classification:} 30G20; 30H20; 46E22.
\section{Introduction}
In this work, we study the bicomplex Vekua equation
\begin{equation}\label{eqVekuaintro}
	\overline{\boldsymbol{\partial}}W(z)-a(z)W(z)-b(z)\overline{W}(z)=0, \quad z\in \Omega,
\end{equation}
where $\Omega\subset \mathbb{C}$ is a bounded domain, $\overline{\boldsymbol{\partial}}:=\frac{1}{2}\left(\frac{\partial}{\partial_x}+\mathbf{j}\frac{\partial}{\partial y}\right)$ is the bicomplex Cauchy-Riemann operator, and $a$ and $b$ are essentially bounded functions in $\Omega$. The conjugation $\overline{W}$ is with respect to the bicomplex unit $\mathbf{j}$. Bicomplex equations of the form \eqref{eqVekuaintro} appear, for example, in the study of the factorization of the Schr\"odinger equation with a complex potential \cite{CamposBicomplex,hugo,kravpseudo1,pseudoanalyticvlad}, and in the study of the Dirac system with fixed energy and with scalar potential in the two-dimensional case \cite{camposmendez,castanieda}. The study of classical solutions of Eq. \eqref{eqVekuaintro} in the complex case (where the considered  coefficients  and solutions are complex-valued) is intrinsically related to the theory of pseudoanalytic functions \cite{bers,Colton1}, the study of the Schr\"odinger and conductivity equations \cite{leblont,hugoraul,delgadoleblond,kravpseudo1}, and Calderon's inverse problem in the plane \cite{aslatalapaivarinta}. Recently, the theory of pseudoanalytic functions has been generalized to the bicomplex case \cite{CamposBicomplex}, and for the case of the {\it main Vekua equation} (when $a=0$ and $b=\frac{\overline{\boldsymbol{\partial}}f}{f}$ with $f$ being a non-vanishing complex-valued function), its applications include the solution of elliptic complex equations \cite{hugo,rochon} and the explicit construction of complete systems of solutions \cite{berglez,camposmendez,hugo,minevekua1}. In \cite{delgadoleblond}, the Bergman space of weak solutions was studied for the complex main Vekua equation, while in \cite{camposbergman}, the space of classical $L_2$-solutions of the complex Vekua equation was analyzed, demonstrating the existence of a reproducing kernel. In both studies, the spaces were treated as {\it real} Hilbert spaces. Regarding the bicomplex main Vekua equation, in \cite{minevekua1}, the case when $f\in C^1(\overline{\Omega})\cap C^{2}(\Omega)\cap W^{2,\infty}(\Omega)$ was studied and it was shown that the Bergman space of classical $L_2$-solutions is a complex Hilbert space with a reproducing kernel \cite[Prop. 14]{minevekua1}.

The aim of this work is to develop the main properties of the Bergman spaces of $L_p$-weak solutions of Eq. \eqref{eqVekuaintro}. Following \cite{vekua0}, we introduce a notion of weak $\boldsymbol{\partial}$ and $\overline{\boldsymbol{\partial}}$ derivatives. The Vekua-Bergman space $\mathcal{A}_{(a,b)}^p(\Omega;\mathbb{B})$, where $\mathbb{B}$ denotes the algebra of bicomplex numbers, is defined as the collection of weak $L_p$-solutions of \eqref{eqVekuaintro} for $1<p\leqslant\infty$. The main properties such as completeness, reflexivity and separability are established. We analyze the regularity of the solutions of the non-homogeneous equation  $\overline{\boldsymbol{\partial}}W-aW-b\overline{W}=V$, with $V\in L_q(\Omega;\mathbb{B})$, $q>2$, and obtain that the solutions belong to the class $C^{0,1-\frac{2}{q}}(D;\mathbb{B})$ in every disk $D\Subset \Omega$. Consequently, the solutions of \eqref{eqVekuaintro} are continuous. We demonstrate that for every $z\in \Omega$,  the evaluation functionals $W\mapsto \operatorname{Sc}W(Z)$ and $W\mapsto \operatorname{Vec}W(z)$ are bounded with respect to the $L_p$-norm, where $\operatorname{Sc}W$ and $\operatorname{Vec}W$ are the scalar and vector part of $W$, respectively. For the case $p=2$, a kernel with a reproducing property is obtained. Unlike the complex case \cite{camposbergman}, where the similarity principle \cite{bers,vekua0} plays a crucial role, in the bicomplex case this property is not available for a general Vekua equation. Therefore, the proof relies on the regularity of the solutions and the use of an inequality of Calderon-Zygmun type. We also present a formula for the orthogonal projection onto $\mathcal{A}_{(a,b)}^2(\Omega;\mathbb{B})$, along with an explicit expression for the orthogonal complement $\left(\mathcal{A}_{(a,b)}^2(\Omega;\mathbb{B})\right)^{\perp}$. Finally, we extend the result that the scalar and vector parts of solutions of the main Vekua equation are weak solutions of the conductivity equations $\operatorname{div}f^2\nabla \left(\frac{u}{f}\right)$ and $\operatorname{div}f^{-2}\nabla (fv)=0$, respectively, where $f\in W^{1,\infty}(\Omega)$ is a non-vanishing complex-valued function such that $f^{-1}\in L_{\infty}(\Omega)$. This result is well-known in the case of classical solutions for $f\in C^2(\Omega)$ \cite{CamposBicomplex,pseudoanalyticvlad}. We show that for $f\in C^1(\Omega)$, the weak solutions of the main Vekua equation are classical solutions. For a star-shaped domain, we establish that for every solution $u$ of $\operatorname{div}f^2\nabla \left(\frac{u}{f}\right)$, a metaharmonic conjugate $v$ (such that $W=u+\mathbf{j}v$ is a solution of the main Vekua equation) can be obtained using a bounded operator in $W^{1,2}(\Omega)$. With the aid of such a relation, we extend the Runge property of elliptic equations \cite{Colton1,minerunge} to the main Vekua equation.

The paper is structured as follows. Section 2 provides an overview of the main results concerning the algebra of bicomplex numbers, analytic functions, and integral operators. In Section 3, the Vekua-Bergman space $\mathcal{A}_{(a,b)}^p(\Omega)$ is defined together with its main properties (completeness, separability, reflexivity). In section 4, the regularity of the solutions of the non-homogeneous Vekua equation and the boundedness of the evaluation functional are studied. Section 5 presents the existence and main properties of a reproducing kernel for $\mathcal{A}_{(a,b)}^2(\Omega;\mathbb{B})$. In Section 6, we derive an explicit expression for the orthogonal complement of $\mathcal{A}_{(a,b)}^2(\Omega;\mathbb{B})$ in $L_2(\Omega;\mathbb{B})$. In Section 7, the main properties of the solutions of the main Vekua equation are studied, as well as the relations of the scalar and vector parts of the solutions with the conductivity equation, the construction of metaharmonic conjugates, the existence of a Hilbert transform, and the Runge property.
\section{Background on bicomplex analytic functions}

This section provides a summary of the main properties of bicomplex numbers and bicomplex-valued analytic functions of a single complex variable, which can be found, for example, in \cite{CamposBicomplex,rochon,pseudoanalyticvlad,ShapiroBicomplex}.  We use the notation $\mathbb{N}_0:=\mathbb{N}\cup \{0\}$. Given Banach spaces $X$ and $Y$, $\mathcal{B}(X,Y)$ denotes the Banach space of bounded linear operators. When $X=Y$, we denote $\mathcal{B}(X)=\mathcal{B}(X,X)$. Additionally, $\mathcal{G}(X)$ denotes the group of bounded invertible operators and  $\mathcal{K}(X)$ denotes the ideal of compact operators.
\subsection{Basic properties}
Consider the set $\mathbb{C}^2$ with the usual addition and multiplication by an scalar. The bicomplex product $WZ$ of two elements $W=(w_1,w_2), Z=(z_1,z_2)\in \mathbb{C}^2$ is defined as $WZ=(w_1z_1-w_2z_2,w_1z_2+w_2z_1)$. With this product, $\mathbb{C}^2$ is a commutative algebra over $\mathbb{C}$ with unit element $(1,0)$ and is called the algebra of the {\it bicomplex  numbers}, denoted by $\mathbb{B}$. Denote $\mathbf{j}=(0,1)$. Identifying $a\in \mathbb{C}$ with $(a,0)$, we can write any $W\in \mathbb{B}$  in a unique form as $W=u+\mathbf{j}v$, with $u,v\in \mathbb{C}$. The complex numbers $\operatorname{Sc}W=u$ and $\operatorname{Vec}W=v$ are called the  {\it scalar} and {\it vector} parts of $W$, respectively. We have that $\mathbf{j}^2=-1$ and $i\mathbf{j}=\mathbf{j}i$.  The {\it bicomplex conjugate} of $W$ is defined by $\overline{W}:= u-\mathbf{j}v$. Note that $W\overline{W}=u^2-v^2\in \mathbb{C}$. The algebra $\mathbb{B}$ contains zero divisors. Indeed, consider 
\[
\mathbf{k}:=i\mathbf{j}, \qquad \mathbf{p}^{\pm} := \frac{1}{2}(1\pm\mathbf{k}),
\]
hence $\mathbf{p}^{+}\mathbf{p}^{-}=0$, $(\mathbf{p}^{\pm})^2=\mathbf{p}^{\pm}$ and $\mathbf{p}^{+}+\mathbf{p}^{-}=1$. The set of zero divisors of $\mathbb{B}$ is denoted by $\sigma(\mathbb{B})$. The following proposition summarizes some properties of the bicomplex numbers shown in \cite{CamposBicomplex, ShapiroBicomplex}.

\begin{proposition}\label{properties1}
	Let $W,V\in \mathbb{B}$.
	\begin{enumerate}
		\item $W\in \sigma(\mathbb{B})$ iff $W\overline{W}=0$.
		\item If $W\overline{W}\neq 0$, then $W$ is invertible and $W^{-1}= \frac{\overline{W}}{W\overline{W}}$.
		\item There exist unique $W^{\pm}\in \mathbb{C}$ such that $W=\mathbf{p}^{+}W^{+}+\mathbf{p}^{-}W^{-}$. Furthermore, $W^{\pm}=\operatorname{Sc}W\mp i\operatorname{Vec}W$.
		\item $W\in\sigma(\mathbb{B})$ iff $W=\mathbf{p}^{+}W^{+}$ or $W=\mathbf{p}^{-}W^{-}$.
		\item Denote $\mathcal{R}(\mathbb{B}):= \mathbb{B}\setminus \sigma(\mathbb{B})$. Then $\mathcal{R}(\mathbb{B})$ is the group of all the invertibles elements of $\mathbb{B}$, and for any $W\in \mathcal{R}(\mathbb{B})$ we have
		\begin{equation}\label{inversebicomplex}
			W^{-1}=\frac{\mathbf{p}^{+}}{W^{+}}+\frac{\mathbf{p}^{-}}{W^{-}}.
		\end{equation}
		\item The product $WV$ can be written as
		\begin{equation}\label{producbicomplex}
			WV= \mathbf{p}^{+}W^{+}V^{+}+\mathbf{p}^{+}W^{-}V^{-}.
		\end{equation}
		In particular
		\begin{equation}\label{powerbicomplex}
			W^n= \mathbf{p}^{+}(W^{+})^n+\mathbf{p}^{-}(W^{-})^n, \quad \forall n\in \mathbb{N}.
		\end{equation}
		When $W\in \mathcal{R}(\mathbb{B})$, formula (\ref{powerbicomplex}) is valid for all $n\in \mathbb{Z}$.
	\end{enumerate}	
\end{proposition}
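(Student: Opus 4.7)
The plan is to prove the six items in a tightly coupled order, exploiting the fact that the idempotents $\mathbf{p}^{\pm}$ yield an algebra isomorphism $\mathbb{B}\cong\mathbb{C}\oplus\mathbb{C}$; once this is in hand the remaining statements reduce to bookkeeping. I would first handle items (1) and (2) simultaneously. The key observation is that $W\overline{W}=(u+\mathbf{j}v)(u-\mathbf{j}v)=u^{2}-\mathbf{j}^{2}v^{2}\in\mathbb{C}$ by direct expansion and commutativity. If $W\overline{W}\neq 0$ this is a nonzero complex scalar, so $\overline{W}/(W\overline{W})\in\mathbb{B}$ is well defined and a one-line check shows it is a two-sided inverse of $W$; in particular $W\notin\sigma(\mathbb{B})$, proving both (2) and one direction of (1). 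Conversely, if $W\overline{W}=0$ with $W\neq 0$, then $\overline{W}\neq 0$ (since $W\mapsto\overline{W}$ is a $\mathbb{C}$-linear involution), so the equation $W\cdot\overline{W}=0$ itself exhibits $W$ as a zero divisor.

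Next I would prove item (3) by a direct linear computation. Substituting $\mathbf{p}^{\pm}=\tfrac12(1\pm i\mathbf{j})$ gives
\[
\mathbf{p}^{+}W^{+}+\mathbf{p}^{-}W^{-}=\tfrac{W^{+}+W^{-}}{2}+\mathbf{j}\cdot\tfrac{i(W^{+}-W^{-})}{2}.
\]
Since $1$ and $\mathbf{j}$ form a basis of $\mathbb{B}$ over $\mathbb{C}$, matching coefficients against $u+\mathbf{j}v$ yields the unique solution $W^{\pm}=u\mp i\,\operatorname{Vec}W$. Item (6) then follows by expanding $WV$ using $(\mathbf{p}^{\pm})^{2}=\mathbf{p}^{\pm}$ and $\mathbf{p}^{+}\mathbf{p}^{-}=0$; the power formula \eqref{powerbicomplex} follows by induction, and for $W\in\mathcal{R}(\mathbb{B})$ the extension to negative integers is obtained from (5), proved below.

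For item (4) I would use that bicomplex conjugation is $\mathbb{C}$-linear and acts on the idempotents as $\overline{\mathbf{p}^{\pm}}=\mathbf{p}^{\mp}$ (because $\overline{i\mathbf{j}}=-i\mathbf{j}$), so $\overline{W}=\mathbf{p}^{-}W^{+}+\mathbf{p}^{+}W^{-}$. Applying the product formula from (6),
\[
W\overline{W}=\mathbf{p}^{+}W^{+}W^{-}+\mathbf{p}^{-}W^{-}W^{+}=(\mathbf{p}^{+}+\mathbf{p}^{-})W^{+}W^{-}=W^{+}W^{-}.
\]
Combined with item (1), $W\in\sigma(\mathbb{B})$ iff $W^{+}W^{-}=0$, i.e.\ iff one of the two idempotent components vanishes, which is exactly the statement of (4). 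Finally, item (5) drops out by substituting the expressions for $\overline{W}$ and $W\overline{W}$ just obtained into $W^{-1}=\overline{W}/(W\overline{W})$ from (2), giving \eqref{inversebicomplex}.

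Essentially every step is a short algebraic manipulation, so there is no serious obstacle; the only place where one needs to be a little careful is the reverse implication of (1), to make sure the argument does not implicitly invoke (4) (which would be circular). Proving it as above, via the nonzero factor $\overline{W}$, keeps the logical order clean: (1)--(2) first, then (3), then (6), then (4), and finally (5).
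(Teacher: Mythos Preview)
The paper does not actually prove this proposition: it is stated as a summary of standard facts and attributed to the references \cite{CamposBicomplex,ShapiroBicomplex}. So there is no ``paper's own proof'' to compare against, and your job is simply to supply one. Your argument does this correctly and efficiently. The logical ordering you chose---(1)/(2) first via the scalar $W\overline{W}$, then the idempotent decomposition (3), then the product formula (6), then (4) via $W\overline{W}=W^{+}W^{-}$, and finally (5)---is clean and avoids circularity; in particular, your handling of the reverse implication in (1) using the nonzero factor $\overline{W}$ is the right way to keep (1) independent of (4).

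Two very small remarks. First, the paper's convention (implicit in item (5)) is that $0\in\sigma(\mathbb{B})$, so in the reverse direction of (1) you should note that the case $W=0$ is trivial; your argument as written only covers $W\neq 0$. Second, item (5) also asserts that $\mathcal{R}(\mathbb{B})$ coincides with the group of invertibles; you use this implicitly but do not state it. It follows at once from (1) and (2): invertible elements are never zero divisors (and are nonzero), while conversely $W\notin\sigma(\mathbb{B})$ forces $W\overline{W}\neq 0$ by (1), hence $W$ is invertible by (2). Neither point is a genuine gap.
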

For $W\in \mathbb{B}$ define 
\begin{equation}\label{involution1}
	W^{\dagger}:= \mathbf{p}^{+}(W^{+})^{*}+\mathbf{p}^{-}(W^{-})^{*}=\left(\operatorname{Sc}W\right)^*-\mathbf{j}\left(\operatorname{Vec}W\right)^*,	
\end{equation}
where $^{*}$ denotes the standard complex conjugation. The operation $W\mapsto W^{\dagger}$ is an involution in $\mathbb{B}$ \cite[Ch. I]{ShapiroBicomplex}. For $W,V\in \mathbb{B}$ define
\[
\langle W,V \rangle_{\mathbb{B}} := \operatorname{Sc} WV^{\dagger} = \frac{W^{+}(V^{+})^*+ W^{-}(V^{-})^*}{2}=(\operatorname{Sc}W)(\operatorname{Sc}V)^*+(\operatorname{Vec}W)(\operatorname{Vec}V)^*.
\]
It is not difficult to see that $\langle W,V \rangle_{\mathbb{B}}$ is a complex inner product in $\mathbb{B}$, and we have the norm
\[
|W|_{\mathbb{B}}:= \sqrt{\langle W,W \rangle_{\mathbb{B}}} = \sqrt{\frac{|W^{+}|^2+|W^{-}|^2}{2}}=\sqrt{|\operatorname{Sc}W|^2+|\operatorname{Vec}W|^2},
\]
where $|W^{\pm}|$ denotes the complex absolute value. The following inequality is immediate
\begin{equation}\label{equivalentnorms}
	\frac{1}{\sqrt{2}}|W^{\pm}|\leqslant |W|_{\mathbb{B}}\leqslant \frac{1}{\sqrt{2}}\left(|W^+|+|W^{-}|\right).
\end{equation}
\begin{remark}\label{remarknorm} 
	A direct computation shows the following relations:
	\begin{eqnarray}
		WV^{\dagger} & = & \langle W,V \rangle_{\mathbb{B}}+\mathbf{j}\langle W,\mathbf{j}V\rangle_{\mathbb{B}}, \label{bicomplexinnerproduct} \\
		\langle W,\mathbf{j}V\rangle_{\mathbb{B}} & =&-\langle \mathbf{j}W,V\rangle_{\mathbb{B}}, \label{bicomplexinnerproductwithj}\\
		|WV|_{\mathbb{B}} & \leqslant & \sqrt{2}|W|_{\mathbb{B}}|V|_{\mathbb{B}}.	\label{norminequality}
	\end{eqnarray}
	Inequality \eqref{norminequality} implies that the bicomplex product is a continuous operation in $\mathbb{B}$ and that $\mathcal{R}(\mathbb{B})$ is open \cite[Prop. 3]{CamposBicomplex}.
\end{remark}	

\subsection{Bicomplex weak Cauchy-Riemann operators and analytic functions}
Let $\Omega\subset \mathbb{C}$ be a bounded domain. Consider a bicomplex-valued function of the complex variable $z=x+iy\in \Omega$, $F:\Omega \rightarrow \mathbb{B}$. Hence $F=f_1+\mathbf{j}f_2$, where $f_1(z)=\operatorname{Sc}F(z)$ and $f_2(z)=\operatorname{Vec}F(z)$. By Proposition \ref{properties1}(3) we can write $F=\mathbf{p}^{+}F^{+}+\mathbf{p}^{-1}F^{-}$ where $F^{\pm}=f_1\mp if_2$. 

Let $\mathscr{F}(\Omega)$ be a linear space of complex-valued functions (e.g., $C^k(\Omega), L_p(\Omega), W^{k,p}(\Omega)$). We say that a bicomplex-valued function $F$ belongs to the bicomplex space $\mathscr{F}(\Omega;\mathbb{B})$ iff $\operatorname{Sc}F, \operatorname{Vec}F\in \mathscr{F}(\Omega)$, or equivalently, if $F^{\pm}\in \mathscr{F}(\Omega)$.

For $1\leqslant p<\infty$, $L_p(\Omega;\mathbb{B})$ is equipped with the norm $\|F\|_{L_p(\Omega; \mathbb{B})}:=\left(\iint_{\Omega}|F(z)|^p_{\mathbb{B}}dA_z\right)^{\frac{1}{p}}$, and for $p=\infty$, $\|F\|_{L_{\infty}(\Omega;\mathbb{B})}:= \operatorname{ess sup}_{z\in \Omega}|W(z)|_{\mathbb{B}}$.  In particular, for the case $p=2$, $L_2(\Omega; \mathbb{B})$ is a complex Hilbert space with the inner product 
\begin{equation}\label{L2bicomplexnorm}
	\langle F,G \rangle_{L_2(\Omega; \mathbb{B})} := \iint_{\Omega}\langle F(z),G(z)\rangle_{\mathbb{B}}dA_z, 
\end{equation}
Since $L_p(\Omega; \mathbb{B})$ can be regarded as the product space $L_p(\Omega)\times L_p(\Omega)$ and for $1\leqslant p<\infty$$, L_p(\Omega)$ is separable \cite[Sec. 4.3]{brezis},  $L_p(\Omega;\mathbb{B})$ is also separable. In particular, $L_2(\Omega;\mathbb{B})$ is a separable complex Hilbert space. By the same argument, since $L_p(\Omega)$ is reflexive for $1<p<\infty$ \cite[Th. 4.10]{brezis}, $L_p(\Omega;\mathbb{B})$ is also reflexive.  The Sobolev spaces $W^{k,p}(\Omega; \mathbb{B})$, with $k\in \mathbb{N}$ and $1\leqslant p\leqslant \infty$, the local spaces $L_{p,loc}(\Omega; \mathbb{B})$, $W^{k,p}_{loc}(\Omega;\mathbb{B})$, and $W_0^{k,p}(\Omega; \mathbb{B})$ are defined in the usual way and equipped with corresponding norms.

Let $X\subset \mathbb{C}$ and $\epsilon\in (0,1]$. We recall that a function $W$ satisfies the H\"older condition with H\"older exponent $\epsilon$ on $X$ if $ \sup\limits_{\overset{z_1,z_2\in X}{ z_1\neq z_2}}\frac{|W(z_2)-W(z_1)|_{\mathbb{B}}}{|z_2-z_1|^{\epsilon}}<\infty$, and we denote $W\in C^{0,\epsilon}(X;\mathbb{B})$. In the case $\epsilon=1$, we say that $W$ is Lipschitz continuous on $X$. For a bounded domain $\Omega$, a function $W:\Omega\rightarrow \mathbb{B}$ is said to be {\it locally H\"older on disks} of $\Omega$ with H\"older exponent $\epsilon$ if $W|_D\in C^{0,\epsilon}(\overline{D};\mathbb{B})$ for every disk $D\Subset \Omega$. In such case, we denote $W\in C_{locdisk}^{0,\epsilon}(\Omega)$.

\begin{remark}\label{remarksobolevembedding}
	By the standard Sobolev embedding theorems (see \cite[Cor. 9.14]{brezis} and \cite{evans}, Th. 5. from Sec. 5.6 and Th. 4 from Sec. 5.8) , when $\Omega$ is of class $C^1$, we have the compact embeddings: $W^{1,p}(\Omega)\hookrightarrow L_q(\Omega)$ for $p<2$ and $2<q<p^*:=\frac{2p}{p-1}$; $W^{1,2}(\Omega)\hookrightarrow L_q(\Omega)$ for  $2\leqslant q<\infty$; and $W^{1,p}(\Omega)\subset C^{0,1-\frac{2}{p}}(\overline{\Omega})$ for $2<p\leqslant \infty$. In consequence, the following embeddings are compact:  $W^{1,p}(\Omega;\mathbb{B})\hookrightarrow L_q(\Omega;\mathbb{B})$ for $p<2$ and $2<q<p^*$; $W^{1,2}(\Omega; \mathbb{B})\hookrightarrow L_q(\Omega; \mathbb{B})$ for $2\leqslant q<\infty$; and $W^{1,p}(\Omega; \mathbb{B})\hookrightarrow C^{0,1-\frac{2}{p}}(\overline{\Omega}; \mathbb{B})$ for $2<p\leqslant \infty$. Consequently, the local Sobolev spaces are embedded as follows: $W^{1,p}_{loc}(\Omega;\mathbb{B})\hookrightarrow L_q(\Omega;\mathbb{B})$ for $p<2$ and $2<q<p^*$; $W^{1,2}_{loc}(\Omega)\hookrightarrow L_{q,loc}(\Omega)$ for $2\leqslant q<\infty$; $W^{1,p}_{loc}(\Omega)\hookrightarrow C_{locdisk´}^{0,1-\frac{2}{p}}(\Omega)$ for $2<p\leqslant \infty$.
\end{remark}

Given $z=x+iy$ with $x,y\in \mathbb{R}$, denote
\begin{equation}\label{bicomplexification}
	\widehat{z}:=x+\mathbf{j}y \in \mathbb{B}.	
\end{equation}
Note that when $z,z_0\in \mathbb{C}$, 
\begin{equation}\label{variablez}
	\widehat{z}-\widehat{z_0}= \mathbf{p}^{+}(z^*-z^*_0)+\mathbf{p}^{-}(z-z_0).
\end{equation}
From Proposition \ref{properties1}(5), when $z\neq z_0$ we have
\begin{equation}\label{powerz}
	\left(\widehat{z}-\widehat{z_0}\right)^n = \mathbf{p}^{+}(z^*-z_0^*)^n+\mathbf{p}^{-}(z-z_0)^n, \quad \forall n\in \mathbb{Z}.
\end{equation}
A direct computation shows that $\mathbb{C}\ni z\mapsto \widehat{z}\in \mathbb{B}$ is a monomorphism of algebras. Consequently, the set $\{x+\mathbf{j}y\,|\, x,y\in \mathbb{R}\}$ is a subfield of $\mathbb{B}$ isomorphic to $\mathbb{C}$,  and $\widehat{z}^{\dagger}= \overline{\widehat{z}}= \widehat{z^*}$.

The Bicomplex {\it Cauchy-Riemann} operators are defined by
\begin{equation}\label{defCR}
	\boldsymbol{\partial} := \frac{1}{2}\left(\frac{\partial}{\partial x}-\mathbf{j}\frac{\partial}{\partial y}\right), \quad \overline{\boldsymbol{\partial}} :=\frac{1}{2}\left(\frac{\partial}{\partial x}+\mathbf{j}\frac{\partial}{\partial y}\right).
\end{equation} The following relations hold:
\begin{equation}\label{CRdecomposition}
	\overline{\boldsymbol{\partial}}= \mathbf{p}^{+}\frac{\partial}{\partial z}+\mathbf{p}^{-}\frac{\partial}{\partial z^*}  , \quad \boldsymbol{\partial} = \mathbf{p}^{+}\frac{\partial}{\partial z^*}+\mathbf{p}^{-}\frac{\partial}{\partial z},
\end{equation}
where $\frac{\partial}{\partial z}:= \frac{1}{2}\left(\frac{\partial}{\partial x}-i\frac{\partial}{\partial y}\right)$ and $\frac{\partial}{\partial z^*}:= \frac{1}{2}\left(\frac{\partial}{\partial x}+i\frac{\partial}{\partial y}\right)$ are the usual complex Cauchy-Riemann operators (see \cite{CamposBicomplex}). A function $W\in C^1(\Omega; \mathbb{B})$ is called $\mathbb{B}$-analytic (anti-analytic) when $\overline{\boldsymbol{\partial}}W=0$ ($\boldsymbol{\partial} W=0$). From Proposition \ref{properties1}(5) and (\ref{CRdecomposition}), a function  $W\in C^1(\Omega;\mathbb{B})$ is $\mathbb{B}$-analytic (anti-analytic) iff $W^{\pm}$ are anti-analytic and analytic, respectively, in the complex sense. By (\ref{CRdecomposition}) and (\ref{powerz}), the powers $\{(\widehat{z}-\widehat{z_0})^n\}_{n=0}^{\infty}$ are $\mathbb{B}$-analytic and $\boldsymbol{\partial}(\widehat{z}-\widehat{z_0})^n=n(\widehat{z}-\widehat{z_0})^{n-1}$. The standard properties of complex analytic functions (Cauchy integral and power series representations, maximum principle, Liouville theorem, etc) are also valid for $\mathbb{B}$-analytic functions (see, e.g., \cite[Remark 3.2]{camposmendez} or \cite[Prop. 6]{minevekua1}).

Let $1\leqslant p\leqslant \infty$ and $W\in L_p(\Omega; \mathbb{B})$. As is usual, a function $V\in L_{1,loc}(\Omega; \mathbb{B})$ is called the weak $\boldsymbol{\partial}$-derivative of $W$ is the following condition is fulfilled
\begin{equation}\label{defweakcomplexpartialderivative}
	\iint_{\Omega}W(z)\boldsymbol{\partial}\phi(z)dA_z=-\iint_{\Omega}V(z)\phi(z)dA_z, \quad \forall \phi\in C_0^{\infty}(\Omega; \mathbb{B}),
\end{equation}
and we denote $V=\boldsymbol{\partial}W$. The weak $\overline{\boldsymbol{\partial}}$-derivative is defined analogously.
\begin{remark}\label{remarkWeyllema}
	From relations (\ref{CRdecomposition}), taking scalar functions in (\ref{defweakcomplexpartialderivative}) we have that $\overline{\boldsymbol{\partial}}W=0$ weakly iff $\frac{\partial W^+}{\partial z}=\frac{\partial W^{-}}{\partial z^*}=0$ weakly. By the Weyl Lemma \cite[Ch. 18, Cor. 4.11]{conway2}, $W^+$ and $W^{-}$ are analytic and anti-analytic and hence $W$ is $\mathbb{B}$-analytic. In the same way, $\boldsymbol{\partial}W=0$ weakly iff $W$ is $\mathbb{B}$-anti-analytic.
\end{remark}
 Following the notations from \cite[Sec. 5]{vekua0}, we introduce the  spaces
\[
\mathfrak{D}_p(\Omega; \mathbb{B}):=\left\{W\in L_p(\Omega; \mathbb{B})\, |\, \boldsymbol{\partial}W\in L_p(\Omega; \mathbb{B}) \right\}, \;  \overline{\mathfrak{D}}_p(\Omega; \mathbb{B}):=\left\{W\in L_p(\Omega; \mathbb{B})\, |\, \overline{\boldsymbol{\partial}}W\in L_p(\Omega; \mathbb{B}) \right\}.
\]
Note that $W\in W^{1,p}(\Omega; \mathbb{B})$ iff $W\in \mathfrak{D}_p(\Omega; \mathbb{B})\cap \overline{\mathfrak{D}}_p(\Omega; \mathbb{B})$, and $W^{1,p}(\Omega; \mathbb{B})$ can be endowed with the norm $\|W\|_{W^{1,p}(\Omega; \mathbb{B})}=\left(\|W\|_{L_p(\Omega; \mathbb{B})}^p+\|\boldsymbol{\partial}W\|_{L_p(\Omega; \mathbb{B})}^p+\|\overline{\boldsymbol{\partial}}W\|_{L_p(\Omega; \mathbb{B})}^p\right)^{\frac{1}{p}}$ for $1\leqslant p<\infty$, and $\|W\|_{W^{1,\infty}(\Omega; \mathbb{B})}=\max\{\|W\|_{L_{\infty}(\Omega; \mathbb{B})},\|\boldsymbol{\partial}W\|_{L_{\infty}(\Omega; \mathbb{B})},\|\overline{\boldsymbol{\partial}}W\|_{L_{\infty}(\Omega; \mathbb{B})} \}$.

\begin{remark}\label{completenessweakderivative}
	\begin{itemize}
		\item[(i)] The spaces $\mathfrak{D}_p(\Omega; \mathbb{B})$ and $\overline{\mathfrak{D}}_p(\Omega; \mathbb{B})$ are complete with their respective norms. The proof is similar to that for the completeness of Sobolev spaces.
		\item[(ii)] A direct computation shows that
		\begin{equation}\label{involutuionofcauchyriemannoperator}
			(\boldsymbol{\partial}W)^{\dagger}= \overline{\boldsymbol{\partial}}W^{\dagger} \quad \mbox{ and  }\quad (\overline{\boldsymbol{\partial}}W)^{\dagger}= \boldsymbol{\partial} W^{\dagger}.
		\end{equation} 
		 Thus, we have the bijection $\mathfrak{D}_p(\Omega; \mathbb{B}) \overset{\dagger}{\leftrightarrow}\overline{\mathfrak{D}}_p(\Omega; \mathbb{B})$. Additionally, $\mathfrak{D}_p(\Omega; \mathbb{B})$ and $\overline{\mathfrak{D}}_p(\Omega; \mathbb{B})$ are $\mathbb{B}$-modules and $\boldsymbol{\partial}$ and $\overline{\boldsymbol{\partial}}$ are $\mathbb{B}$-linear maps.
	\end{itemize}
\end{remark}

\subsection{Bicomplex integral operators}
From now on, we suppose that $\Omega$ is a bounded simply connected domain. 
Given $g\in L_p(\Omega)$ ($1<p\leqslant \infty)$, the complex Theodorescu operator (and its conjugate) are given by
\[
A_{\Omega}g(z):= \frac{1}{\pi}\iint_{\Omega}\frac{g(\zeta)}{z-\zeta}dA_{\zeta}, \quad B_{\Omega}g(z):= \frac{1}{\pi}\iint_{\Omega}\frac{g(\zeta)}{z^*-\zeta^*}dA_{\zeta}.
\]
Note that $B_{\Omega}g(z)=(A_{\Omega}g^*(z))^*$. 
\begin{remark}\label{proptheodorescucomplex}
	The following statements hold.
	\begin{itemize}
		\item[(i)] For $1\leqslant p\leqslant\infty$, $A_{\Omega}, B_{\Omega}\in \mathcal{B}(L_p(\Omega))$ and their norms are bounded by $2\operatorname{diam}(\Omega)$ (see \cite[Sec. 12.13]{karapetyantskrav} and \cite[Th. 6.18]{folland}). Furthermore, $\frac{\partial}{\partial z^*}A_{\Omega}g=g$ and $\frac{\partial}{\partial z}B_{\Omega}g=g$ (in the weak sense) for all $g\in L_p(\Omega)$ \cite[Th. 1.14]{vekua0}.
		\item[(ii)] For $1< p<\infty$, $A_{\Omega}, B_{\Omega}\in \mathcal{B}\left(L_p(\Omega),W^{1,p}(\Omega)\right)$ (see  \cite[Appendix A]{leblont} and the references cited there). For $p=\infty$, $A_{\Omega},B_{\Omega}\in \mathcal{B}(L_{\infty}(\Omega), C^{0,\epsilon}(\Omega))$ for all $0<\epsilon<1$ (see \cite[Th 6.1]{Colton1}). 
		\item[(iii)] The adjoint of $A_{\Omega}$ in $L_2(\Omega)$ is $-B_{\Omega}$. The proof is a straightforward.
	\end{itemize}
\end{remark}

Now, assume that $\Omega$ is a Lipschitz domain, in order that $\Gamma=\partial \Omega$ forms a Jordan curve with a Lipschitz exterior normal $\nu(\zeta)$, and such that Green's theorem  and the existence of the Sobolev spaces $W^{1,1-\frac{1}{p}}(\Gamma)$ hold \cite[Ch. 3]{maclean}. For $\varphi\in L_p(\Gamma;\mathbb{B})$, we define
\[
\int_{\Gamma}\varphi(\zeta)d\widehat{\zeta}:= \mathbf{p}^+\int_{\Gamma}\varphi^+(\zeta)d\zeta^*+\mathbf{p}^{-}\int_{\Gamma}\varphi^{-}(\zeta)d\zeta\; \mbox{ and }\; \int_{\Gamma}\varphi(\zeta)d\widehat{\zeta}^{\dagger}= \left(\int_{\Gamma}\varphi^{\dagger}(\zeta)d\zeta\right)^{\dagger}.
\]
A direct computation using Green's theorem shows the following relations for $W\in W^{1,p}(\Omega; \mathbb{B})$:
\begin{align}\label{bicomplexGreenidentities}
		\iint_{\Omega}\overline{\boldsymbol{\partial}}W(z)dA_z =\frac{1}{2\pi \mathbf{j}}\int_{\Gamma}\operatorname{tr}_{\Gamma}W(\zeta)d\widehat{\zeta}\;\; \mbox{ and }\; \; \iint_{\Omega}\boldsymbol{\partial}W(z)dA_z =-\frac{1}{2\pi \mathbf{j}}\int_{\Gamma}\operatorname{tr}_{\Gamma}W(\zeta)d\widehat{\zeta}^{\dagger}.
\end{align}
We recall the Borel-Pompeiu formula, which is valid for $\varphi\in C^{\infty}(\overline{\Omega})$:
\begin{equation}\label{BorelPompeiuformulacomplex}
	C_{\Gamma}\varphi(z)+A_{\Omega}\left[\frac{\partial \varphi}{\partial z^*}\right](z)= \varphi(z), \quad z\in \Omega.
\end{equation}
Here, $\displaystyle C_{\Gamma}\varphi(z):=\frac{1}{2\pi i}\int_{\Gamma}\frac{\varphi(\zeta)}{\zeta-z}d\zeta$ is the (regular) Cauchy integral operator. Actually, $C_{\Gamma}\varphi(z)$ is well defined for $\varphi\in L_p(\Gamma)$, $1\leqslant p\leqslant \infty$, and it is well known that $C_{\Gamma}\varphi$ is analytic in $\mathbb{C}\setminus \Gamma$.

\begin{proposition}\label{propkcauchyintegraloperator}
	For $1<p<\infty$, the Cauchy integral operator $C_{\Gamma}: W^{1,1-\frac{1}{p}}(\Gamma)\rightarrow W^{1,p}(\Omega)$ is bounded.
\end{proposition}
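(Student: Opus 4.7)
The plan is to reduce the claim to the Borel--Pompeiu identity (\ref{BorelPompeiuformulacomplex}) together with the trace theorem for Sobolev spaces on the Lipschitz domain $\Omega$. The space $W^{1,1-\frac{1}{p}}(\Gamma)$ is the trace space of $W^{1,p}(\Omega)$: the trace operator $\operatorname{tr}_{\Gamma}: W^{1,p}(\Omega)\to W^{1,1-\frac{1}{p}}(\Gamma)$ is bounded and admits a bounded right inverse (extension) $E: W^{1,1-\frac{1}{p}}(\Gamma)\to W^{1,p}(\Omega)$ with $\|E\varphi\|_{W^{1,p}(\Omega)}\leqslant c\,\|\varphi\|_{W^{1,1-\frac{1}{p}}(\Gamma)}$, both facts being available on Lipschitz domains \cite[Ch.~3]{maclean}.

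Given $\varphi\in W^{1,1-\frac{1}{p}}(\Gamma)$, I would set $\Phi:=E\varphi\in W^{1,p}(\Omega)$ and first extend the Borel--Pompeiu formula (\ref{BorelPompeiuformulacomplex}) from $C^{\infty}(\overline{\Omega})$ to $W^{1,p}(\Omega)$, yielding
\begin{equation*}
    C_{\Gamma}(\operatorname{tr}_{\Gamma}\Phi)(z)\;=\;\Phi(z)\;-\;A_{\Omega}\!\left[\tfrac{\partial \Phi}{\partial z^{*}}\right](z), \qquad z\in \Omega.
\end{equation*}
Since $C^{\infty}(\overline{\Omega})$ is dense in $W^{1,p}(\Omega)$ and both the trace operator and the Theodorescu operator $A_{\Omega}:L_p(\Omega)\to W^{1,p}(\Omega)$ (Remark \ref{proptheodorescucomplex}(ii)) are continuous, a standard approximation argument delivers the identity: for smooth $\Phi_n\to\Phi$ the right-hand side converges in $W^{1,p}(\Omega)$, and the traces $\operatorname{tr}_{\Gamma}\Phi_n\to\operatorname{tr}_{\Gamma}\Phi=\varphi$ in $W^{1,1-\frac{1}{p}}(\Gamma)$, so $C_{\Gamma}\operatorname{tr}_{\Gamma}\Phi_n$ has a well-defined $W^{1,p}$-limit which we label $C_{\Gamma}\varphi$.

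Applying the extended identity to $\Phi=E\varphi$ gives $C_{\Gamma}\varphi=E\varphi-A_{\Omega}[\partial E\varphi/\partial z^{*}]$ as elements of $W^{1,p}(\Omega)$, and the boundedness of $A_{\Omega}$ together with the continuity of $E$ yields
\begin{equation*}
    \|C_{\Gamma}\varphi\|_{W^{1,p}(\Omega)} \;\leqslant\; \|E\varphi\|_{W^{1,p}(\Omega)}+\|A_{\Omega}\|_{\mathcal{B}(L_p,W^{1,p})}\!\left\|\tfrac{\partial E\varphi}{\partial z^{*}}\right\|_{L_p(\Omega)} \;\leqslant\; c'\,\|\varphi\|_{W^{1,1-\frac{1}{p}}(\Gamma)},
\end{equation*}
which is the desired boundedness. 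The main delicate point I anticipate is the density/limit argument for the Borel--Pompeiu identity, since the pointwise definition of $C_{\Gamma}\varphi$ only makes direct sense for $\varphi$ with enough regularity; however, using the identity itself to \emph{define} the $W^{1,p}$-extension of $C_{\Gamma}$ (and then verifying consistency with the classical boundary integral on smooth data) sidesteps the issue and relies only on results already cited in the text.
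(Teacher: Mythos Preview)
Your approach is essentially the paper's: Borel--Pompeiu plus a bounded right inverse $E:W^{1,1-\frac1p}(\Gamma)\to W^{1,p}(\Omega)$ of the trace, with a density argument to pass from $C^\infty(\overline\Omega)$ to $W^{1,p}(\Omega)$. The one genuine gap is precisely the point you flag at the end. The proposition concerns the \emph{concretely defined} Cauchy integral $C_\Gamma\varphi(z)=\frac{1}{2\pi i}\int_\Gamma\frac{\varphi(\zeta)}{\zeta-z}\,d\zeta$, which (as the paper notes) is already well defined pointwise for every $\varphi\in L_p(\Gamma)$; you must show that your abstract $W^{1,p}$-limit coincides with \emph{this} function for every $\varphi\in W^{1,1-\frac1p}(\Gamma)$, not merely that it agrees on smooth boundary data. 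Agreement on a dense subspace does not suffice unless you know \emph{a priori} that the concrete $C_\Gamma$ is continuous into some Hausdorff topology in which the abstract limit also converges---and that is exactly what remains to be supplied.

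The paper closes this gap explicitly: the functions $C_\Gamma[\phi_n|_\Gamma]$ are holomorphic and converge in $L_p(\Omega)$, hence (by the Bergman-space estimate, \cite[Ch.~1]{duren}) uniformly on compact subsets of $\Omega$; on the other hand, for a fixed compact $K\subset\Omega$ the kernel bound $|\zeta-z|^{-1}\leqslant \operatorname{dist}(K,\Gamma)^{-1}$ gives $C_\Gamma[\phi_n|_\Gamma]\to C_\Gamma[\operatorname{tr}_\Gamma u]$ uniformly on $K$ directly from $\phi_n|_\Gamma\to\operatorname{tr}_\Gamma u$ in $L_p(\Gamma)$. The two locally-uniform limits therefore agree, which identifies the abstract extension with the integral operator. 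Your proposed ``sidestep'' (verify consistency only on smooth data and use the identity as a definition) does not supply this identification and, incidentally, would also require checking that the right-hand side $E\varphi-A_\Omega[\partial_{z^*}E\varphi]$ is independent of the choice of extension $E$. Both issues are easy to resolve, but they must be addressed.
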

The proof is provided in Appendix A. In particular, the Borel-Pompeiu formula (\ref{BorelPompeiuformulacomplex}) is valid for functions $u\in W^{1,p}(\Omega)$. Following \cite{CamposBicomplex}, we introduce the bicomplex version of the Theodorescu and the Cauchy integral operators.

\begin{definition}\label{definitionBicomplextheodorescu}
	Let $1<p\leqslant \infty$. The $\mathbb{B}$-Theodorescu and the $\mathbb{B}$-Cauchy operators are defined by
	\begin{align}\label{BicomplextheodorescuandCauchy}
		\mathbf{T}_{\Omega}W(z) :=\frac{1}{\pi} \iint_{\Omega}\frac{W(\zeta)}{\widehat{z}-\widehat{\zeta}}dA_{\zeta},  \; \mbox{ and }\; 
		\mathbf{C}_{\Gamma}\varphi(z)  := \frac{1}{2\pi\mathbf{j}}\int_{\Gamma}\frac{\varphi(\zeta)}{\widehat{\zeta}-\widehat{z}}d\widehat{\zeta}, 
	\end{align}
	for $W\in L_p(\Omega;\mathbb{B})$ and $\psi\in L_p(\Gamma,\mathbb{B})$, respectively.
\end{definition}
\begin{proposition}\label{remarktheodorescuop}
	The following statements hold.
	\begin{itemize}
		\item[(i)] The operators in \eqref{BicomplextheodorescuandCauchy} can be written as $	\mathbf{T}_{\Omega}W(z)= \mathbf{p}^+B_{\Omega}W^+(z)+\mathbf{p}^{-}A_{\Omega}W^{-}(z)$ and $\mathbf{C}_{\Gamma}\varphi(z) = \mathbf{p}^+\left(C_{\Gamma}\left(\varphi^+\right)^*(z)\right)^*+\mathbf{p}^{-}C_{\Gamma}\varphi^{-}(z)$, respectively.
		\item[(ii)] For $1\leqslant p\leqslant \infty$, $\mathbf{T}_{\Omega}\in \mathcal{B}\left(L_p(\Omega; \mathbb{B})\right)$ and $\mathbf{T}_{\Omega}W\in \overline{\mathfrak{D}}_p(\Omega;\mathbb{B})$ with  $\overline{\boldsymbol{\partial}}\mathbf{T}_{\Omega}W=W$ for every $W\in L_p(\Omega;\mathbb{B})$. 
		\item[(iii)] For $1< p<\infty$, $\mathbf{T}_{\Omega}\in \mathcal{B}\left(L_p(\Omega; \mathbb{B}), W^{1,p}(\Omega; \mathbb{B})\right)$, and for $p=\infty$, $\mathbf{T}_{\Omega}\in \mathcal{B}(L_{\infty}(\Omega),C^{0,\epsilon}(\Omega;\mathbb{B}))$ for all $0<\epsilon<1$.
		\item[(iv)]  If $\Omega$ is of class $C^1$, then $\mathbf{T}_{\Omega}\in \mathcal{K}\left(L_p(\Omega;\mathbb{B})\right)$ for all $1< p<\infty $.
		\item[(v)] $\mathbf{C}_{\Gamma}\in \mathcal{B}\left(W^{1,1-\frac{1}{p}}(\Gamma,\mathbb{B}), W^{1,p}(\Omega;\mathbb{B})\right)$ for $1< p<\infty$.  
		\item[(vi)] Given $W\in W^{1,p}(\overline{\Omega};\mathbb{B})$, $1<p<\infty$, the bicomplex Borel-Pompeiu formula is valid:
		\begin{equation}\label{bicomplexBorelpompeiu}
			\mathbf{C}_{\Gamma}[\operatorname{tr}_{\Gamma}W](z)+\mathbf{T}_{\Omega}\left[\overline{\boldsymbol{\partial}}W\right](z)= W(z), \quad z\in \Omega.
		\end{equation}
		In particular, $\mathbf{T}\overline{\boldsymbol{\partial}}V =V$ for all $V\in W_0^{1,p}(\Omega; \mathbb{B})$.  
		\item[(vii)] The adjoint of $\mathbf{T}_{\Omega}$ in the space $L_2(\Omega; \mathbb{B})$ is given by
		\begin{equation}\label{adjointtheodorescubicomplex}
			\mathbf{T}_{\Omega}^{*}W=-\mathbf{p}^+A_{\Omega}W^{+}-\mathbf{p}^{-}B_{\Omega}W^{-}.
		\end{equation}
		Additionally, $-\boldsymbol{\partial}\mathbf{T}^{*}_{\Omega}W=W$ for all $W\in L_2(\Omega; \mathbb{B})$, and  $-\mathbf{T}_{\Omega}^{*}\boldsymbol{\partial}W=W$ for $W\in W_0^{1,2}(\Omega)$. 
	\end{itemize}
\end{proposition}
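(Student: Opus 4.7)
The unifying idea is that every claim reduces, via the idempotent decomposition $W=\mathbf{p}^{+}W^{+}+\mathbf{p}^{-}W^{-}$, to a statement about the scalar operators $A_{\Omega}$, $B_{\Omega}$, $C_{\Gamma}$ acting separately on each component. To establish (i) for $\mathbf{T}_{\Omega}$, I would substitute the expansion $(\widehat{z}-\widehat{\zeta})^{-1}=\mathbf{p}^{+}(z^{*}-\zeta^{*})^{-1}+\mathbf{p}^{-}(z-\zeta)^{-1}$ coming from (\ref{powerz}) into the integrand, apply the product rule (\ref{producbicomplex}) together with $\mathbf{p}^{+}\mathbf{p}^{-}=0$ and $(\mathbf{p}^{\pm})^{2}=\mathbf{p}^{\pm}$, and identify the resulting scalar integrals with $B_{\Omega}W^{+}$ and $A_{\Omega}W^{-}$. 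For $\mathbf{C}_{\Gamma}$ the same substitution is used together with $d\widehat{\zeta}=\mathbf{p}^{+}d\zeta^{*}+\mathbf{p}^{-}d\zeta$ and the idempotent expansion $\mathbf{j}^{-1}=-\mathbf{j}=i\mathbf{p}^{+}-i\mathbf{p}^{-}$; after using the elementary identity $(A_{\Omega}g^{*})^{*}=B_{\Omega}g$ (which extends at the boundary level to $(C_{\Gamma}g^{*})^{*}=\frac{i}{2\pi}\int_{\Gamma}\frac{g(\zeta)}{\zeta^{*}-z^{*}}d\zeta^{*}$), the $\mathbf{p}^{+}$-term is converted into the stated conjugated Cauchy integral.

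Parts (ii), (iii), (v) then follow immediately from (i) combined with Remark \ref{proptheodorescucomplex} and Proposition \ref{propkcauchyintegraloperator}, because the norm equivalence (\ref{equivalentnorms}) makes convergence in $L_{p}(\Omega;\mathbb{B})$, $W^{1,p}(\Omega;\mathbb{B})$ or $C^{0,\epsilon}(\overline{\Omega};\mathbb{B})$ equivalent to componentwise convergence, and the antilinear map $u\mapsto u^{*}$ is an isometry of each of these spaces. For the identity $\overline{\boldsymbol{\partial}}\mathbf{T}_{\Omega}W=W$ I would apply the decomposition (\ref{CRdecomposition}) to $\mathbf{T}_{\Omega}W=\mathbf{p}^{+}B_{\Omega}W^{+}+\mathbf{p}^{-}A_{\Omega}W^{-}$, so that $\overline{\boldsymbol{\partial}}\mathbf{T}_{\Omega}W=\mathbf{p}^{+}\partial_{z}B_{\Omega}W^{+}+\mathbf{p}^{-}\partial_{z^{*}}A_{\Omega}W^{-}$, and conclude using $\partial_{z}B_{\Omega}=\partial_{z^{*}}A_{\Omega}=\mathrm{Id}$ from Remark \ref{proptheodorescucomplex}(i). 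Compactness in (iv) is obtained by composing the bounded map $\mathbf{T}_{\Omega}:L_{p}(\Omega;\mathbb{B})\to W^{1,p}(\Omega;\mathbb{B})$ from (iii) with the compact embedding $W^{1,p}(\Omega;\mathbb{B})\hookrightarrow L_{p}(\Omega;\mathbb{B})$ supplied by Remark \ref{remarksobolevembedding}.

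For (vi) I would apply the scalar Borel--Pompeiu formula (\ref{BorelPompeiuformulacomplex}) to the function $W^{-}\in W^{1,p}(\Omega)$, obtaining $C_{\Gamma}\operatorname{tr}_{\Gamma}W^{-}+A_{\Omega}\partial_{z^{*}}W^{-}=W^{-}$, which gives the $\mathbf{p}^{-}$-component of (\ref{bicomplexBorelpompeiu}). Applying the same formula to $(W^{+})^{*}$ and then taking complex conjugates, together with the identity $(A_{\Omega}g^{*})^{*}=B_{\Omega}g$ and the fact that conjugation commutes with traces, yields $\bigl(C_{\Gamma}(\operatorname{tr}_{\Gamma}W^{+})^{*}\bigr)^{*}+B_{\Omega}\partial_{z}W^{+}=W^{+}$, which is the $\mathbf{p}^{+}$-component. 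Adding the two and using (i) together with the decomposition $\overline{\boldsymbol{\partial}}W=\mathbf{p}^{+}\partial_{z}W^{+}+\mathbf{p}^{-}\partial_{z^{*}}W^{-}$ produces (\ref{bicomplexBorelpompeiu}); specialising to $W\in W_{0}^{1,p}$ kills the boundary term.

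For (vii) I would rewrite the inner product (\ref{L2bicomplexnorm}) in its idempotent form $\langle F,G\rangle_{L_{2}(\Omega;\mathbb{B})}=\tfrac{1}{2}\bigl(\langle F^{+},G^{+}\rangle_{L_{2}(\Omega)}+\langle F^{-},G^{-}\rangle_{L_{2}(\Omega)}\bigr)$, plug in the decomposition of $\mathbf{T}_{\Omega}$ from (i), and invoke Remark \ref{proptheodorescucomplex}(iii) (so that $B_{\Omega}^{*}=-A_{\Omega}$ as well). Matching components then gives the formula (\ref{adjointtheodorescubicomplex}). The identity $-\boldsymbol{\partial}\mathbf{T}_{\Omega}^{*}W=W$ is proved in the same Cauchy--Riemann-decomposition style as (ii), now using $\boldsymbol{\partial}=\mathbf{p}^{+}\partial_{z^{*}}+\mathbf{p}^{-}\partial_{z}$. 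Finally, $-\mathbf{T}_{\Omega}^{*}\boldsymbol{\partial}W=W$ for $W\in W_{0}^{1,2}$ follows from the trace-free case of the scalar Borel--Pompeiu formula, namely $A_{\Omega}\partial_{z^{*}}u=u$ and $B_{\Omega}\partial_{z}u=u$ for $u\in W_{0}^{1,2}(\Omega)$, applied to each component $W^{\pm}$. The only delicate bookkeeping step will be the $\mathbf{j}^{-1}$ expansion in (i) for $\mathbf{C}_{\Gamma}$ and the placement of complex conjugations in the $\mathbf{p}^{+}$-component; once (i) is established, the remainder is a systematic application of the scalar theory.
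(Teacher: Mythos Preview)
Your proposal is correct and follows essentially the same route as the paper: reduce everything to the scalar operators $A_{\Omega}$, $B_{\Omega}$, $C_{\Gamma}$ via the idempotent decomposition (i), then invoke Remark~\ref{proptheodorescucomplex} and Proposition~\ref{propkcauchyintegraloperator} componentwise. The only minor discrepancy is in (iv): you cite Remark~\ref{remarksobolevembedding} for the compact embedding $W^{1,p}(\Omega;\mathbb{B})\hookrightarrow L_{p}(\Omega;\mathbb{B})$, but that remark, as written, only records $W^{1,p}\hookrightarrow L_{q}$ for $2<q<p^{*}$ when $p<2$; the paper handles this case by passing through such an intermediate $L_{q}$ and then using the bounded inclusion $L_{q}\hookrightarrow L_{p}$ on the bounded domain. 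Your direct appeal to Rellich--Kondrachov is of course valid, just not literally what the cited remark states.
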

\begin{proof}
	\begin{itemize}
		\item[(i)] It follows from the definitions of $A_{\Omega}$, $B_{\Omega}$, $C_{\Gamma}$ and \eqref{powerz}.
		\item[(ii)] It is a consequence of the decomposition in the previous point and Remark \ref{proptheodorescucomplex}(i)-(ii).
		\item[(iii)] It follows from Remark \ref{proptheodorescucomplex} and point (i). 
		\item[(iv)] Indeed, by Remark \ref{remarksobolevembedding},  the embeddings $W^{1,2}(\Omega;\mathbb{B})\hookrightarrow L_2(\Omega;\mathbb{B})$ and $W^{1,p}(\Omega;\mathbb{B})\hookrightarrow C^{0,1-\frac{2}{p}}(\overline{\Omega};\mathbb{B})$ (for $p>2$) are compact.  Hence, the relations $L_2(\Omega;\mathbb{B})\overset{\mathbf{T}_{\Omega}}{\rightarrow}W^{1,2}(\Omega;\mathbb{B})\hookrightarrow L_2(\Omega;\mathbb{B})$ and $L_p(\Omega;\mathbb{B})\overset{\mathbf{T}_{\Omega}}{\rightarrow}W^{1,p}(\Omega;\mathbb{B})\hookrightarrow C^{0,1-\frac{2}{p}}(\overline{\Omega};\mathbb{B})\hookrightarrow L_p(\Omega;\mathbb{B})$ (for $p>2$) imply that $\mathbf{T}_{\Omega}\in \mathcal{K}\left(L_p(\Omega;\mathbb{B})\right)$. For $1<p<2$, note that $p^*=\frac{2p}{p-2}>2$, hence taking $q\in (2,p^*)$ we have the compact embedding $W^{1,p}(\Omega;\mathbb{B})\hookrightarrow L_q(\Omega;\mathbb{B})$ and the relations $L_p(\Omega;\mathbb{B})\overset{\mathbf{T}_{\Omega}}{\rightarrow}W^{1,p}(\Omega;\mathbb{B})\hookrightarrow L_q(\Omega;\mathbb{B})\rightarrow L_p(\Omega;\mathbb{B})$. Consequently, $\mathbf{T}_{\Omega}\in \mathcal{K}\left(L_p(\Omega;\mathbb{B})\right)$.
		\item[(v)] It is a consequence of Proposition \ref{propkcauchyintegraloperator} and point (i). 
		\item[(vi)] Follows from point (i) and the complex Borel-Pompeiu formula.
		\item[(vii)] This is a consequence of \ref{proptheodorescucomplex}(iii). 
	\end{itemize}
\end{proof}

We recall that $\Gamma$ is said to be a Liapunov surface if it is of class $C^1$ and the angle $\Theta(z)$, $z\in \Gamma$, between the tangent to the curve at the point $z$ and the positive real axis satisfies a H\"older condition \cite[Sec. 1.1.1]{gohberg}.
\begin{lemma}\label{lemalineintegral}
	Suppose that $\Gamma$ is a Liapunov curve. Let $1<p<\infty$, $p'=\frac{p}{p-1}$, and  $V\in W^{1,p}(\Omega; \mathbb{B})$ which satisfies the following condition:
	\begin{equation}\label{conditionlineintegral}
		\int_{\Gamma}\psi(\zeta)V(\zeta)d\widehat{\zeta}=0, \quad \forall \psi\in W^{1,1-\frac{1}{p'}}(\Gamma; \mathbb{B}).
	\end{equation}
	Then there exists $W_0\in W_0^{1,p}(\Omega; \mathbb{B})$ and $G\in W^{1,p}(\Omega; \mathbb{B})$ $\mathbb{B}$-analytic such that
	\begin{equation}\label{decompositionlineintegralcondition}
		V=W_0+G.
	\end{equation}
\end{lemma}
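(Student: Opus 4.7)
The plan is to apply the bicomplex Borel--Pompeiu formula \eqref{bicomplexBorelpompeiu} and set
\[
G := \mathbf{C}_{\Gamma}[\operatorname{tr}_{\Gamma} V], \qquad W_0 := V - G = \mathbf{T}_{\Omega}[\overline{\boldsymbol{\partial}} V].
\]
By Proposition \ref{remarktheodorescuop}(v), $G \in W^{1,p}(\Omega;\mathbb{B})$, and from the componentwise description in Proposition \ref{remarktheodorescuop}(i) together with the analyticity of the complex Cauchy integral, $G^{+}$ is anti-analytic and $G^{-}$ is analytic in $\Omega$; hence $G$ is $\mathbb{B}$-analytic by the criterion recalled after \eqref{CRdecomposition}. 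The task reduces to proving $W_0 \in W_0^{1,p}(\Omega;\mathbb{B})$, which I will establish by showing that the Theodorescu representation makes sense on all of $\mathbb{C}$ and that $W_0$ vanishes identically on $\mathbb{C}\setminus\overline{\Omega}$.

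The crucial use of the hypothesis is the test choice
\[
\psi_z(\zeta):=\frac{1}{2\pi\mathbf{j}}\left(\widehat{\zeta}-\widehat{z}\right)^{-1}, \qquad z\in\mathbb{C}\setminus\overline{\Omega}.
\]
By \eqref{variablez} together with Proposition \ref{properties1}(1), $\widehat{\zeta}-\widehat{z}$ avoids $\sigma(\mathbb{B})$ for every $\zeta\in\overline{\Omega}$, so $\psi_z$ is smooth on a neighborhood of $\overline{\Omega}$ and its trace on $\Gamma$ belongs to $W^{1,1-\frac{1}{p'}}(\Gamma;\mathbb{B})$. Substituting $\psi_z$ into \eqref{conditionlineintegral} yields $\mathbf{C}_{\Gamma}V(z)=0$ for all $z\notin\overline{\Omega}$. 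On the other hand, by \eqref{CRdecomposition}--\eqref{powerz} the function $\psi_z$ is $\mathbb{B}$-analytic in $\zeta$; the bicomplex Leibniz rule (available because $\mathbb{B}$ is commutative) therefore reduces $\overline{\boldsymbol{\partial}}(\psi_z V)$ to $\psi_z\,\overline{\boldsymbol{\partial}}V$, and Green's identity \eqref{bicomplexGreenidentities} applied to $\psi_zV\in W^{1,p}(\Omega;\mathbb{B})$ produces the exterior Borel--Pompeiu relation
\[
\mathbf{C}_{\Gamma}V(z)+\mathbf{T}_{\Omega}[\overline{\boldsymbol{\partial}}V](z)=0, \qquad z\in\mathbb{C}\setminus\overline{\Omega}.
\]
Combining the two displays forces $W_0(z)=0$ for every $z\in\mathbb{C}\setminus\overline{\Omega}$.

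To finish, observe that $\overline{\boldsymbol{\partial}}V\in L_p(\Omega;\mathbb{B})$ extended by zero lies in $L_p(\mathbb{R}^2;\mathbb{B})$, and the same Theodorescu integral defines $W_0$ for every $z\in\mathbb{C}$; using Remark \ref{proptheodorescucomplex}(i)--(ii) componentwise via Proposition \ref{remarktheodorescuop}(i) shows that $W_0\in W^{1,p}(B;\mathbb{B})$ for any bounded open ball $B\supset\overline{\Omega}$. Since $\Gamma$ is Liapunov and in particular Lipschitz, the standard characterization $u\in W_0^{1,p}(\Omega)$ iff the zero extension of $u$ belongs to $W^{1,p}(\mathbb{R}^2)$, applied to $\operatorname{Sc}W_0$ and $\operatorname{Vec}W_0$ separately, yields $\operatorname{tr}_{\Gamma}W_0=0$, hence $W_0\in W_0^{1,p}(\Omega;\mathbb{B})$, and the decomposition $V=W_0+G$ follows. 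The delicate step is the justification of the exterior Borel--Pompeiu identity under the weak regularity $V\in W^{1,p}$: one must either check the Leibniz manipulation directly via the $\mathbb{B}$-module structure of $\mathfrak{D}_p\cap \overline{\mathfrak{D}}_p$ or run a density argument combined with the continuity of $\mathbf{C}_{\Gamma}$ and $\mathbf{T}_{\Omega}$ furnished by Proposition \ref{remarktheodorescuop}(iii),(v).
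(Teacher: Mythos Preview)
Your proof is correct and produces exactly the same decomposition as the paper (with $G=\mathbf{C}_{\Gamma}[\operatorname{tr}_{\Gamma}V]$ and $W_0=V-G$), but the verification that $W_0\in W_0^{1,p}(\Omega;\mathbb{B})$ follows a genuinely different route. The paper works componentwise: testing with the monomials $\zeta^n$, it invokes a Hardy--space/Smirnov--class result from Gohberg--Kupnik (Theorem 4.5 of Section 2.4 in \cite{gohberg}, which is where the Liapunov hypothesis is actually used) to identify $\operatorname{tr}_{\Gamma}V^{\pm}$ with the non-tangential boundary values of their Cauchy integrals, and from $\operatorname{tr}_{\Gamma}V^{\pm}=\operatorname{tr}_{\Gamma}G^{\pm}$ concludes $\operatorname{tr}_{\Gamma}W_0=0$. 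Your argument instead tests with the exterior Cauchy kernel, uses the exterior Borel--Pompeiu identity to show that the globally defined Theodorescu transform $\mathbf{T}_{\Omega}[\overline{\boldsymbol{\partial}}V]$ vanishes on $\mathbb{C}\setminus\overline{\Omega}$, and then appeals to the zero-extension characterization of $W_0^{1,p}$. Your approach is more self-contained (it avoids the specialized singular-integral reference) and in fact only uses that $\Gamma$ is Lipschitz, so it yields the lemma under a weaker boundary hypothesis than the paper states; the paper's route, on the other hand, makes explicit contact with classical boundary-value theory for analytic functions.
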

\begin{proof}
	The condition (\ref{conditionlineintegral}) can be rewritten as follows:
	\begin{equation}\label{auxilia1}
		\int_{\Gamma}\psi(\zeta)\operatorname{tr}_{\Gamma}u(\zeta)d\zeta^*= 0, \quad \int_{\Gamma}\phi(\zeta)\operatorname{tr}_{\Gamma}v(\zeta)d\zeta=0 \quad \forall \phi,\psi\in W^{1,1-\frac{1}{p'}}(\Gamma),
	\end{equation}
	where $u=V^+$ and $v=V^{-}$. We focus on the second integral. Condition \eqref{auxilia1} implies $\displaystyle \int_{\Gamma}\operatorname{tr}_{\Gamma}v(\zeta)\zeta^nd\zeta=0$ for all $n\in \mathbb{N}_0$. According to \cite[Th. 4.5 of Sec. 2.4]{gohberg},  $\operatorname{tr}_{\Gamma}u$ is the non-tangential limit of $g(z)=C_{\Gamma}[\operatorname{tr}_{\Gamma}v](z)$, $z\in \Omega$. By Proposition \ref{propkcauchyintegraloperator}, $g\in W^{1,p}(\Omega)$ and then $\operatorname{tr}_{\Gamma}u=\operatorname{tr}_{\Gamma}g$. Hence $w_0=v-g\in W^{1,p}_0(\Omega)$ and $v=w_0+g$. Applying the same procedure to the integral $\int_{\Gamma}\psi^*(\zeta)\operatorname{tr}_{\Gamma}u^*(\zeta)d\zeta=0$, we obtain that $u=w_1+h$, where $w_1\in W_0^{1,p}(\Omega)$ and $h\in W^{1,p}(\Omega)$ is anti-analytic. Thus, $V=W+G$, where $W=\mathbf{p}^+w_1+\mathbf{p}^{-}w_0\in W^{1,p}(\Omega; \mathbb{B})$ and $G=\mathbf{p}^+h+\mathbf{p}^{-}g\in W^{1,p´}(\Omega; \mathbb{B})$, which is $\mathbb{B}$-analytic.
\end{proof}

\section{The Vekua-Bergman space}

Let $1<p\leqslant \infty$ and $a,b\in L_{\infty}(\Omega;\mathbb{B})$. We consider the Bicomplex Vekua equation
\begin{equation}\label{bicomplexvekua}
	\overline{\boldsymbol{\partial}}W(z)= a(z)W(z)+b(z)\overline{W(z)}, \quad z\in \Omega.
\end{equation}
According to \eqref{defweakcomplexpartialderivative} and by \eqref{involutuionofcauchyriemannoperator}, a function $W\in L_p(\Omega; \mathbb{B})$ is a weak solution of the Vekua equation if and only if it satisfies the condition
\begin{equation}\label{weaksolutionsecondform}
	-\iint_{\Omega} W(z)\left(\boldsymbol{\partial}V(z)\right)^{\dagger}dA_z= \iint_{\Omega} \left(a(z)W(z)+b(z)\overline{W(z)}\right) V^{\dagger}(z)dA_z, \quad \forall V\in C_0^{\infty}(\Omega; \mathbb{B}).
\end{equation}
For $1<p<\infty$, it is a straightforward to show, using \eqref{norminequality}, that \eqref{weaksolutionsecondform} holds for all $V\in W^{1,p'}_0(\Omega;\mathbb{B})$, where $p'=\frac{p}{p-1}$.
\begin{definition}
	The {\bf Vekua-Bergman space} is the class of all weak solutions $W\in L_p(\Omega; \mathbb{B})$ of Eq. (\ref{bicomplexvekua}), denoted by $\mathcal{A}_{(a,b)}^p(\Omega; \mathbb{B})$.
\end{definition}

\begin{remark}\label{reamkWeyllemma}
	For the case $a\equiv b \equiv 0$, we obtain the $\mathbb{B}$-analytic Bergman space $\mathcal{A}^p(\Omega; \mathbb{B}):= \mathcal{A}_{(0,0)}^p(\Omega; \mathbb{B})$. By Remark \ref{remarkWeyllema}, $\mathcal{A}^p(\Omega; \mathbb{B})=\{ W\in C^1(\Omega; \mathbb{B})\cap L_p(\Omega; \mathbb{B})\, |\, W \mbox{ is }  \mathbb{B}\mbox{-analytic}\}$.
\end{remark}

Denote $\mathbf{Q}_{(a,b)}W:= aW+b\overline{W}$. Since $a,b\in L_{\infty}(\Omega; \mathbb{B})$, then $\mathbf{Q}_{(a,b)}\in \mathcal{B}\left(L_p(\Omega)\right)$. By (\ref{norminequality}), we have the inequality
\begin{equation}\label{normoperatorQ}
	\|\mathbf{Q}_{(a,b)}\|_{\mathcal{B}(L_p(\Omega;\mathbb{B}))}\leqslant \sqrt{2}\max\{\|a\|_{L_{\infty}(\Omega;\mathbb{B})}, \|b\|_{L_{\infty}(\Omega;\mathbb{B})}\}.
\end{equation}
 Following \cite{hugo,delgadoleblond}, we introduce the operator 
\begin{equation}\label{operatorS}
	\mathbf{S}_{\Omega}^{(a,b)} W:=W-\mathbf{T}_{\Omega}[\mathbf{Q}_{(a,b)}W]. 
\end{equation}
Again, $\mathbf{S}_{\Omega}^{(a,b)}\in \mathcal{B}(L_p(\Omega; \mathbb{B}))$. When $\Omega$ is of class $C^1$ and $1< p<\infty$, by Proposition \ref{remarktheodorescuop}(iv), $\mathbf{S}_{\Omega}^{(a,b)}$ is a Fredholm operator with index $0$. In particular, it possesses a finite-dimensional kernel \cite[Th. 2.22]{maclean}.

\begin{proposition}\label{propositiondiferenciability} The following statements hold.
\begin{itemize}
	\item[(i)] The operator $\mathbf{S}_{\Omega}^{(a,b)}$ maps $\overline{\mathfrak{D}}_p(\Omega;\mathbb{B})$ into itself, and the following relation is valid:
	\begin{equation}\label{relationpartialS}
		\overline{\boldsymbol{\partial}}\mathbf{S}_{\Omega}^{(a,b)}W=\left(\overline{\boldsymbol{\partial}}-\mathbf{Q}_{(a,b)}\right)W,\quad \forall W\in \overline{\mathfrak{D}}_p(\Omega;\mathbb{B}).
	\end{equation}
\item[(ii)] 	Let $1<p<\infty$. If $W\in L_p(\Omega; \mathbb{B})$ and $V\in L_p(\Omega; \mathbb{B})$ satisfy $\overline{\boldsymbol{\partial}}W-\mathbf{Q}_{(a,b)}W=V$ in the weak sense, then $W\in W^{1,p}_{loc}(\Omega; \mathbb{B})$. 
\item[(iii)]  If $W\in L_{\infty}(\Omega; \mathbb{B})$ and $V\in L_{\infty}(\Omega; \mathbb{B})$ satisfy $\overline{\boldsymbol{\partial}}W-\mathbf{Q}_{(a,b)}W=V$ in the weak sense, then $W\in C^{0,\epsilon}_{locdisk}(\Omega; \mathbb{B})$ for all $0<\epsilon <1$. 
\end{itemize}	
\end{proposition}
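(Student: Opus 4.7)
The plan divides into three parts, following the statement.

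For part (i), I would argue directly from the definitions. If $W\in\overline{\mathfrak{D}}_p(\Omega;\mathbb{B})$ and $a,b\in L_{\infty}(\Omega;\mathbb{B})$, then $\mathbf{Q}_{(a,b)}W\in L_p(\Omega;\mathbb{B})$ by the bound \eqref{normoperatorQ}, so Proposition \ref{remarktheodorescuop}(ii) gives $\mathbf{T}_{\Omega}[\mathbf{Q}_{(a,b)}W]\in\overline{\mathfrak{D}}_p(\Omega;\mathbb{B})$ with $\overline{\boldsymbol{\partial}}\mathbf{T}_{\Omega}[\mathbf{Q}_{(a,b)}W]=\mathbf{Q}_{(a,b)}W$. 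The $\mathbb{B}$-linearity of $\overline{\boldsymbol{\partial}}$ recorded in Remark \ref{completenessweakderivative}(ii) then yields \eqref{relationpartialS} and shows $\mathbf{S}_{\Omega}^{(a,b)}W\in\overline{\mathfrak{D}}_p(\Omega;\mathbb{B})$.

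For part (ii), the strategy is to localize the problem and then invert $\overline{\boldsymbol{\partial}}$ explicitly. Given a disk $D\Subset\Omega$, I pick a cutoff $\varphi\in C_0^{\infty}(\Omega)$ with $\varphi\equiv 1$ on $D$, and choose an open disk $\Omega'$ containing $\operatorname{supp}\varphi$. Extending $\varphi W$ by zero outside $\Omega$, the product rule (read weakly, with $\varphi$ smooth) produces
\[
\overline{\boldsymbol{\partial}}(\varphi W)=(\overline{\boldsymbol{\partial}}\varphi)W+\varphi(aW+b\overline{W}+V)\in L_p(\Omega';\mathbb{B}).
\]
The decisive step is to establish the representation $\varphi W=\mathbf{T}_{\Omega'}[\overline{\boldsymbol{\partial}}(\varphi W)]$ a.e.\ on $\Omega'$. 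I would obtain it through mollification: for standard mollifiers $\psi_{1/n}$, the sequence $W_n:=(\varphi W)*\psi_{1/n}$ lies in $C_0^{\infty}(\Omega';\mathbb{B})\subset W_0^{1,p}(\Omega';\mathbb{B})$ for $n$ large, so Proposition \ref{remarktheodorescuop}(vi) applies and gives $W_n=\mathbf{T}_{\Omega'}[\overline{\boldsymbol{\partial}}W_n]$. As $n\to\infty$, $W_n\to\varphi W$ and $\overline{\boldsymbol{\partial}}W_n=(\overline{\boldsymbol{\partial}}(\varphi W))*\psi_{1/n}\to\overline{\boldsymbol{\partial}}(\varphi W)$ in $L_p(\Omega';\mathbb{B})$, and the $L_p$-continuity of $\mathbf{T}_{\Omega'}$ yields the desired identity in the limit. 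Proposition \ref{remarktheodorescuop}(iii) then upgrades $L_p$ input to $W^{1,p}$ output, so $\varphi W\in W^{1,p}(\Omega';\mathbb{B})$, and restricting to $D$ (where $\varphi\equiv 1$) gives $W\in W^{1,p}(D;\mathbb{B})$. Since $D$ was arbitrary, $W\in W^{1,p}_{loc}(\Omega;\mathbb{B})$.

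For part (iii), I would bootstrap from (ii). Since $\Omega$ is bounded, $L_{\infty}(\Omega;\mathbb{B})\subset L_p(\Omega;\mathbb{B})$ for every $1<p<\infty$, so the hypotheses of (ii) are met for every such $p$, delivering $W\in W^{1,p}_{loc}(\Omega;\mathbb{B})$. The local Sobolev embedding $W^{1,p}_{loc}(\Omega;\mathbb{B})\hookrightarrow C^{0,1-2/p}_{locdisk}(\Omega;\mathbb{B})$ for $p>2$, recorded in Remark \ref{remarksobolevembedding}, allows me to pick, for any prescribed $\epsilon\in(0,1)$, a value $p>2/(1-\epsilon)$ and conclude $W\in C^{0,\epsilon}_{locdisk}(\Omega;\mathbb{B})$.

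The main obstacle is the justification of $\varphi W=\mathbf{T}_{\Omega'}[\overline{\boldsymbol{\partial}}(\varphi W)]$ in part (ii): a priori $\varphi W$ only lies in $L_p$ with compact support and weak $\overline{\boldsymbol{\partial}}$-derivative in $L_p$, which is not enough to place it in $W_0^{1,p}(\Omega';\mathbb{B})$ and invoke the Borel-Pompeiu formula directly. The mollification step is exactly what bridges this gap, reducing to the smooth, compactly supported case where Proposition \ref{remarktheodorescuop}(vi) is available.
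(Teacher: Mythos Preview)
Your argument is correct, but for parts (ii) and (iii) it follows a different path from the paper's.

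For (ii), the paper does not localize with a cutoff. Instead it exploits part (i) directly: since $\overline{\boldsymbol{\partial}}W=\mathbf{Q}_{(a,b)}W+V\in L_p$, one has $W\in\overline{\mathfrak{D}}_p$, and then \eqref{relationpartialS} gives $\overline{\boldsymbol{\partial}}H=V$ for $H=\mathbf{S}_\Omega^{(a,b)}W$. The Weyl lemma (Remark~\ref{reamkWeyllemma}) then yields $H=\mathbf{T}_\Omega V+G$ with $G$ $\mathbb{B}$-analytic, hence $H\in W^{1,p}_{loc}$; finally $W=H+\mathbf{T}_\Omega\mathbf{Q}_{(a,b)}W\in W^{1,p}_{loc}$. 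This avoids the mollification step entirely by letting the analytic part $G$ absorb the ``boundary'' contribution that your cutoff construction handles via $\mathbf{T}_{\Omega'}\overline{\boldsymbol{\partial}}=\mathrm{Id}$ on $W_0^{1,p}$. Your approach is a standard PDE localization and is perfectly valid; the paper's is shorter and makes the role of $\mathbf{S}_\Omega^{(a,b)}$ as an intertwiner between the Vekua and Cauchy--Riemann operators more transparent (indeed, part (i) is otherwise unused in your proof of (ii)).

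For (iii), the paper again stays with the global decomposition $W=G+\mathbf{T}_\Omega V+\mathbf{T}_\Omega\mathbf{Q}_{(a,b)}W$ and invokes the mapping property $\mathbf{T}_\Omega\colon L_\infty\to C^{0,\epsilon}$ from Proposition~\ref{remarktheodorescuop}(iii) directly, rather than passing through $W^{1,p}_{loc}$ for finite $p$ and Sobolev embedding. Both routes give the same conclusion; yours reuses (ii) efficiently, while the paper's is a one-line application of the $L_\infty$ theory for $\mathbf{T}_\Omega$.
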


\begin{proof}
	Consider $W\in \overline{\mathfrak{D}}_p(\Omega; \mathbb{B})$, and let $H=\mathbf{S}_{\Omega}^{(a,b)}W\in L_p(\Omega; \mathbb{B})$. As $\mathbf{Q}_{(a,b)}W\in L_p(\Omega; \mathbb{B})$, by Proposition \ref{remarktheodorescuop}(ii), we observe that  $\mathbf{T}_{\Omega} \mathbf{Q}_{(a,b)}W\in \overline{\mathfrak{D}}_p(\Omega; \mathbb{B})$, hence $H\in \overline{\mathfrak{D}}_p(\Omega;\mathbb{B})$, and by Proposition \ref{remarktheodorescuop}(ii), $\overline{\boldsymbol{\partial}}H=\overline{\boldsymbol{\partial}}W-\mathbf{Q}_{(a,b)}W$. This establishes (i). Now suppose that $1<p<\infty$. Since $W\in L_p(\Omega;\mathbb{B})$ is a solution of the non-homogenous Vekua equation with right-hand side $V\in L_p(\Omega;\mathbb{B})$, $W\in \overline{\mathfrak{D}}_p(\Omega;\mathbb{B})$. By \eqref{relationpartialS}, $H=\mathbf{S}_{\Omega}^{(a,b)}W$ is a weak solution of $\overline{\boldsymbol{\partial}}H=V$ if and only if $W$ is a weak solution of $(\overline{\boldsymbol{\partial}}-\mathbf{Q}_{(a,b)})W=V$. Note that equation $\overline{\boldsymbol{\partial}}H=V$ and Remark \ref{reamkWeyllemma} imply that $H=\mathbf{T}_{\Omega}V+G$, with $G\in \mathcal{A}^p(\Omega; \mathbb{B})$. Hence $H\in W^{1,p}_{loc}(\Omega; \mathbb{B})$, by Proposition \ref{remarktheodorescuop}(iii). Thus, $W=H+\mathbf{T}_{\Omega}\mathbf{Q}_{(a,b)}W\in W^{1,p}_{loc}(\Omega)$. When $p=\infty$, by Proposition \ref{remarktheodorescuop}(iii), $\mathbf{T}_{\Omega}V$ and $\mathbf{T}_{\Omega}\mathbf{Q}_{(a,b)}W$ belongs to $C^{0,\epsilon}(\Omega;\mathbb{B})$ for all $0<\epsilon<1$, from where we obtain (iii).
\end{proof}

\begin{corollary}\label{coroOperatorS}
	$H=\mathbf{S}_{\Omega}^{A,B}W\in \mathcal{A}^p(\Omega; \mathbb{B})$ iff $W\in \mathcal{A}_{(a,b)}^p(\Omega; \mathbb{B})$. Hence $\mathcal{A}_{(a,b)}^p(\Omega; \mathbb{B})\subset W^{1,p}_{loc}(\Omega; \mathbb{B})$ for $1<p<\infty$, and $\mathcal{A}_{(a,b)}^{\infty}(\Omega;\mathbb{B})\subset C^{0,\epsilon}_{locdisk}(\Omega;\mathbb{B})$ for all $0<\epsilon<1$.
\end{corollary}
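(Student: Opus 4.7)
The proof is almost immediate from Proposition \ref{propositiondiferenciability}, and my plan is essentially to chase definitions through the key identity \eqref{relationpartialS}. The first thing I would verify is the membership hypothesis needed to invoke that identity: for the forward direction, if $W\in\mathcal{A}_{(a,b)}^p(\Omega;\mathbb{B})$, then $\overline{\boldsymbol{\partial}}W=\mathbf{Q}_{(a,b)}W$ weakly, and since $a,b\in L_\infty(\Omega;\mathbb{B})$ the right-hand side lies in $L_p(\Omega;\mathbb{B})$ by \eqref{normoperatorQ}, so $W\in\overline{\mathfrak{D}}_p(\Omega;\mathbb{B})$. Then \eqref{relationpartialS} gives $\overline{\boldsymbol{\partial}}H=(\overline{\boldsymbol{\partial}}-\mathbf{Q}_{(a,b)})W=0$ weakly; combined with $H\in L_p(\Omega;\mathbb{B})$ (since $\mathbf{S}_\Omega^{(a,b)}$ is bounded on $L_p$), this places $H$ in $\mathcal{A}^p(\Omega;\mathbb{B})$.

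For the converse, the subtlety is that \eqref{relationpartialS} requires $W\in\overline{\mathfrak{D}}_p(\Omega;\mathbb{B})$, which is not given a priori; so I would first reconstruct $W$ from $H$ using the definition \eqref{operatorS}, namely $W=H+\mathbf{T}_\Omega[\mathbf{Q}_{(a,b)}W]$. Here $H$ is $\mathbb{B}$-analytic and in $L_p$, hence $H\in\overline{\mathfrak{D}}_p(\Omega;\mathbb{B})$ with $\overline{\boldsymbol{\partial}}H=0$, while $\mathbf{T}_\Omega[\mathbf{Q}_{(a,b)}W]\in\overline{\mathfrak{D}}_p(\Omega;\mathbb{B})$ with $\overline{\boldsymbol{\partial}}$-derivative equal to $\mathbf{Q}_{(a,b)}W$ by Proposition \ref{remarktheodorescuop}(ii). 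Summing, $W\in\overline{\mathfrak{D}}_p(\Omega;\mathbb{B})$, and then \eqref{relationpartialS} applied once more yields $(\overline{\boldsymbol{\partial}}-\mathbf{Q}_{(a,b)})W=\overline{\boldsymbol{\partial}}H=0$, i.e. $W\in\mathcal{A}_{(a,b)}^p(\Omega;\mathbb{B})$.

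Finally, the stated inclusions follow directly by specializing Proposition \ref{propositiondiferenciability}(ii)--(iii) to $V\equiv 0$: any $W\in\mathcal{A}_{(a,b)}^p(\Omega;\mathbb{B})$ satisfies $(\overline{\boldsymbol{\partial}}-\mathbf{Q}_{(a,b)})W=0$ weakly with $0\in L_p(\Omega;\mathbb{B})$, so (ii) gives $W\in W^{1,p}_{loc}(\Omega;\mathbb{B})$ when $1<p<\infty$, and (iii) gives $W\in C_{locdisk}^{0,\epsilon}(\Omega;\mathbb{B})$ for every $0<\epsilon<1$ when $p=\infty$. There is no real obstacle here; the only point of minor care is making sure to establish $W\in\overline{\mathfrak{D}}_p(\Omega;\mathbb{B})$ in the reverse implication before applying \eqref{relationpartialS}, which is handled by rewriting $W=H+\mathbf{T}_\Omega[\mathbf{Q}_{(a,b)}W]$ and using the mapping property of $\mathbf{T}_\Omega$.
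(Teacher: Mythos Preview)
Your argument is correct and follows exactly the route implicit in the paper: the corollary is stated without proof because it is a direct specialization of Proposition~\ref{propositiondiferenciability} (and its proof) to $V\equiv 0$, using \eqref{relationpartialS} together with the decomposition $W=H+\mathbf{T}_\Omega[\mathbf{Q}_{(a,b)}W]$. Your extra care in the converse direction---first establishing $W\in\overline{\mathfrak{D}}_p(\Omega;\mathbb{B})$ via $W=H+\mathbf{T}_\Omega[\mathbf{Q}_{(a,b)}W]$ before invoking \eqref{relationpartialS}---is a point the paper leaves implicit but which you handle correctly.
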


Consider the unbounded operator $\overline{\boldsymbol{\partial}}: \operatorname{dom}(\overline{\boldsymbol{\partial}})\subset L_p(\Omega;\mathbb{B})\rightarrow L_p(\Omega;\mathbb{B})$ with domain $\operatorname{dom}(\overline{\boldsymbol{\partial}})=\overline{\mathfrak{D}}_p(\Omega;\mathbb{B})$, and let $\overline{\boldsymbol{\partial}}-\mathbf{Q}_{(a,b)}: \overline{\mathfrak{D}}_p(\Omega;\mathbb{B})\rightarrow L_p(\Omega;\mathbb{B})$ be the sum of $\overline{\boldsymbol{\partial}}$ with the bounded operator $\mathbf{Q}_{(a,b)}$.

\begin{proposition}\label{propositionoperatorisclosed}
	The operator $\overline{\boldsymbol{\partial}}-\mathbf{Q}_{(a,b)}: \overline{\mathfrak{D}}_p(\Omega;\mathbb{B})\rightarrow L_p(\Omega;\mathbb{B})$ is closed.
\end{proposition}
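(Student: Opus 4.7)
The plan is to reduce the claim to closedness of the unperturbed operator $\overline{\boldsymbol{\partial}}$ on $\overline{\mathfrak{D}}_p(\Omega;\mathbb{B})$, and then verify the latter by passing to the limit in the weak-derivative identity. Concretely, the sum of a closed operator and a bounded one (on the same Banach space, with the bounded one acting on the whole space) is closed; since $\mathbf{Q}_{(a,b)}\in\mathcal{B}(L_p(\Omega;\mathbb{B}))$ by \eqref{normoperatorQ}, it suffices to prove that $\overline{\boldsymbol{\partial}}:\overline{\mathfrak{D}}_p(\Omega;\mathbb{B})\to L_p(\Omega;\mathbb{B})$ is closed.

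The reduction step is straightforward: suppose $W_n\in\overline{\mathfrak{D}}_p(\Omega;\mathbb{B})$ with $W_n\to W$ in $L_p(\Omega;\mathbb{B})$ and $(\overline{\boldsymbol{\partial}}-\mathbf{Q}_{(a,b)})W_n\to V$ in $L_p(\Omega;\mathbb{B})$. Since $\mathbf{Q}_{(a,b)}$ is bounded, $\mathbf{Q}_{(a,b)}W_n\to \mathbf{Q}_{(a,b)}W$ in $L_p(\Omega;\mathbb{B})$, and therefore $\overline{\boldsymbol{\partial}}W_n\to V+\mathbf{Q}_{(a,b)}W$ in $L_p(\Omega;\mathbb{B})$. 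Once we know $\overline{\boldsymbol{\partial}}$ is closed, this yields $W\in\overline{\mathfrak{D}}_p(\Omega;\mathbb{B})$ and $\overline{\boldsymbol{\partial}}W=V+\mathbf{Q}_{(a,b)}W$, i.e.\ $(\overline{\boldsymbol{\partial}}-\mathbf{Q}_{(a,b)})W=V$, which is the required closedness of the perturbed operator.

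For the closedness of $\overline{\boldsymbol{\partial}}$, take $W_n\in\overline{\mathfrak{D}}_p(\Omega;\mathbb{B})$ with $W_n\to W$ and $\overline{\boldsymbol{\partial}}W_n\to U$ in $L_p(\Omega;\mathbb{B})$, and fix an arbitrary $\phi\in C_0^{\infty}(\Omega;\mathbb{B})$. By definition \eqref{defweakcomplexpartialderivative} applied to $\overline{\boldsymbol{\partial}}$ we have
\[
\iint_{\Omega}W_n(z)\,\overline{\boldsymbol{\partial}}\phi(z)\,dA_z=-\iint_{\Omega}\overline{\boldsymbol{\partial}}W_n(z)\,\phi(z)\,dA_z.
\]
Using the inequality \eqref{norminequality} together with Hölder's inequality, both sides pass to the limit: the left-hand side converges to $\iint_{\Omega}W\,\overline{\boldsymbol{\partial}}\phi\,dA_z$ because $W_n\to W$ in $L_p$ and $\overline{\boldsymbol{\partial}}\phi\in L_{p'}(\Omega;\mathbb{B})$, and the right-hand side converges to $-\iint_{\Omega}U\,\phi\,dA_z$ analogously. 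This gives the weak identity $\overline{\boldsymbol{\partial}}W=U$; since $U\in L_p(\Omega;\mathbb{B})$ this places $W$ in $\overline{\mathfrak{D}}_p(\Omega;\mathbb{B})$ and identifies $\overline{\boldsymbol{\partial}}W=U$, as required. (One may alternatively simply invoke Remark~\ref{completenessweakderivative}(i): completeness of $\overline{\mathfrak{D}}_p(\Omega;\mathbb{B})$ in its graph norm is equivalent to closedness of $\overline{\boldsymbol{\partial}}$.)

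There is no real obstacle here; the only point deserving care is handling the bicomplex product under limits, which is why inequality \eqref{norminequality} is invoked explicitly to justify the Hölder estimate in the passage to the limit.
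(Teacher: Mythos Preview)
Your proof is correct, and it is actually more direct than the paper's. Both arguments reduce to the closedness of the unperturbed operator $\overline{\boldsymbol{\partial}}$ (the base case established in Remark~\ref{completenessweakderivative}(i)), but the paper takes a detour through the auxiliary operator $\mathbf{S}_{\Omega}^{(a,b)}=\mathbf{I}-\mathbf{T}_{\Omega}\mathbf{Q}_{(a,b)}$: it sets $H_n=\mathbf{S}_{\Omega}^{(a,b)}W_n$, uses the intertwining relation $\overline{\boldsymbol{\partial}}\mathbf{S}_{\Omega}^{(a,b)}W=(\overline{\boldsymbol{\partial}}-\mathbf{Q}_{(a,b)})W$ from Proposition~\ref{propositiondiferenciability}(i) to conclude $\overline{\boldsymbol{\partial}}H_n\to V$, applies closedness of $\overline{\boldsymbol{\partial}}$ to $H_n$, and then recovers $W$ via $W=H+\mathbf{T}_{\Omega}\mathbf{Q}_{(a,b)}W$. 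You bypass this machinery entirely by writing $\overline{\boldsymbol{\partial}}W_n=(\overline{\boldsymbol{\partial}}-\mathbf{Q}_{(a,b)})W_n+\mathbf{Q}_{(a,b)}W_n$ and using boundedness of $\mathbf{Q}_{(a,b)}$ to pass to the limit directly---this is nothing more than the standard fact that a closed operator plus a bounded everywhere-defined perturbation is closed. Your route is shorter and requires no integral operators; the paper's route, while heavier for this particular statement, showcases the use of $\mathbf{S}_{\Omega}^{(a,b)}$, which is the workhorse in the neighboring results (Corollary~\ref{coroOperatorS}, Proposition~\ref{Propestimatenorm}, Theorem~\ref{theorembergmankernel}).
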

\begin{proof}
	The case when $a\equiv b\equiv 0$ is given by Remark \ref{completenessweakderivative}. For the general case, consider $\{W_n\}\subset \overline{\mathfrak{D}}_p(\Omega;\mathbb{B})$ along with $W,V\in L_p(\Omega;\mathbb{B})$ such that $W_n\rightarrow W$ and $(\overline{\boldsymbol{\partial}}-\mathbf{Q}_{(a,b)})W_n\rightarrow V$ in $L_p(\Omega;\mathbb{B})$. Taking $H_n=\mathbf{S}_{\Omega}^{(a,b)}W_n$ and $H=\mathbf{S}_{\Omega}^{(a,b)}W$, the continuity of $\mathbf{S}_{\Omega}^{(a,b)}$ and relation \eqref{relationpartialS} imply that $H_n\rightarrow H$ and $\overline{\boldsymbol{\partial}}H_n=V_n\rightarrow V$ in $L_p(\Omega;\mathbb{B})$. Since $\overline{\boldsymbol{\partial}}$ is closed, $H\in \overline{\mathfrak{D}}_p(\Omega;\mathbb{B})$ and $\boldsymbol{\partial}H=V$. By the equality  $W=H+\mathbf{T}_{\Omega}\mathbf{Q}_{(a,b)}W\in \overline{\mathfrak{D}}_p(\Omega;\mathbb{B})$ and \eqref{relationpartialS}, the operator $\overline{\boldsymbol{\partial}}-\mathbf{Q}_{(a,b)}$ is closed.
\end{proof}

\begin{theorem}
	The Vekua-Bergman space $\mathcal{A}_{(a,b)}^p(\Omega; \mathbb{B})$ is a Banach space, and for $1<p<\infty$ is separable and reflexive. In particular, $\mathcal{A}_{(a,b)}^2(\Omega;\mathbb{B})$ is a separable Hilbert space.
\end{theorem}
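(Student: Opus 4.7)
The plan is to identify $\mathcal{A}_{(a,b)}^p(\Omega;\mathbb{B})$ as a closed subspace of $L_p(\Omega;\mathbb{B})$ and then derive every one of the listed properties from the corresponding properties of $L_p(\Omega;\mathbb{B})$, already recorded earlier in the excerpt.

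First I would observe that $\mathcal{A}_{(a,b)}^p(\Omega;\mathbb{B})$ is exactly the kernel of the unbounded operator $\overline{\boldsymbol{\partial}}-\mathbf{Q}_{(a,b)}$ on $L_p(\Omega;\mathbb{B})$ with domain $\overline{\mathfrak{D}}_p(\Omega;\mathbb{B})$. Indeed, any weak solution $W\in L_p(\Omega;\mathbb{B})$ of \eqref{bicomplexvekua} satisfies $\overline{\boldsymbol{\partial}}W=\mathbf{Q}_{(a,b)}W\in L_p(\Omega;\mathbb{B})$, so $W\in \overline{\mathfrak{D}}_p(\Omega;\mathbb{B})$, and conversely. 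The key ingredient is then Proposition \ref{propositionoperatorisclosed}, which states that $\overline{\boldsymbol{\partial}}-\mathbf{Q}_{(a,b)}$ is closed. From this it follows by a standard one-line argument that its kernel is closed in the $L_p$-topology: if $W_n\in \mathcal{A}_{(a,b)}^p(\Omega;\mathbb{B})$ and $W_n\to W$ in $L_p(\Omega;\mathbb{B})$, then $(\overline{\boldsymbol{\partial}}-\mathbf{Q}_{(a,b)})W_n=0\to 0$, and closedness forces $W\in \overline{\mathfrak{D}}_p(\Omega;\mathbb{B})$ with $(\overline{\boldsymbol{\partial}}-\mathbf{Q}_{(a,b)})W=0$, i.e., $W\in \mathcal{A}_{(a,b)}^p(\Omega;\mathbb{B})$.

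Once closedness is established, the remaining conclusions follow from general Banach-space principles applied to the ambient space $L_p(\Omega;\mathbb{B})$, whose completeness, separability (for $1\leqslant p<\infty$), reflexivity (for $1<p<\infty$), and Hilbert structure (for $p=2$) were already noted in Section 2. Namely, a closed subspace of a Banach space is Banach, a closed subspace of a reflexive space is reflexive, a subspace of a separable metric space is separable, and a closed subspace of a Hilbert space is itself a Hilbert space with the restricted inner product \eqref{L2bicomplexnorm}.

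The only nontrivial step is the closedness of $\overline{\boldsymbol{\partial}}-\mathbf{Q}_{(a,b)}$, which is what would be the main obstacle if it had not already been proved; but Proposition \ref{propositionoperatorisclosed} settles this via the factorization $W=\mathbf{S}_{\Omega}^{(a,b)}W+\mathbf{T}_{\Omega}\mathbf{Q}_{(a,b)}W$ together with the closedness of the plain $\overline{\boldsymbol{\partial}}$-operator. Hence the proof reduces to a short invocation of that proposition plus the abstract inheritance lemmas, with no further analytic work required.
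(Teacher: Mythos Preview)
Your proposal is correct and follows exactly the same approach as the paper: identify $\mathcal{A}_{(a,b)}^p(\Omega;\mathbb{B})$ as the null space of the closed operator $\overline{\boldsymbol{\partial}}-\mathbf{Q}_{(a,b)}$ (Proposition~\ref{propositionoperatorisclosed}), hence a closed subspace of $L_p(\Omega;\mathbb{B})$, and then inherit completeness, separability, reflexivity, and the Hilbert structure from $L_p(\Omega;\mathbb{B})$. The paper's own proof is essentially a two-sentence version of what you wrote.
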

\begin{proof}
	The $\mathcal{A}_{(a,b)}^p(\Omega;\mathbb{B})$ is a closed subspace of $L_p(\Omega;\mathbb{B})$ (and consequently, a Banach space) since it is the null space of the closed operator $\overline{\boldsymbol{\partial}}-\mathbf{Q}_{(a,b)}$. Since for $1<p<\infty$ the space $L_p(\Omega;\mathbb{B})$ is separable and reflexive, $\mathcal{A}_{(a,b)}^p(\Omega;\mathbb{B})$ is also separable and reflexive (see \cite{brezis}, Propositions 3.21 and 3.25). 
\end{proof}

\section{Regularity of the solutions}

In this section, we study the regularity of the solutions of the non-homogeneous Vekua equation
\begin{equation}\label{nonhomogeneousVekua}
	\left(\overline{\boldsymbol{\partial}}-\mathbf{Q}_{(a,b)}\right)W=V,\quad \mbox{with }\; V\in L_p(\Omega;\mathbb{B}), \; 1<p<\infty.
\end{equation}
The notation $G\Subset \Omega$ means that $G$ is an open subset of $\Omega$ with the property that $\overline{G}\subset \Omega$.
\begin{proposition}\label{Propestimatenorm}
	Let $W\in \overline{\mathfrak{D}}_p(\Omega;\mathbb{B})$ be a solution of (\ref{nonhomogeneousVekua}). For every $G\Subset \Omega$, there exists a constant $C_G>0$ (which does not depend on $W$) such that
	\begin{equation}\label{estimatenorm1}
		\|W|_D\|_{W^{1,p}(G;\mathbb{B})}\leqslant C_G\left(\|V\|_{L_p(\Omega;\mathbb{B})}+\|W\|_{L_p(\Omega;\mathbb{B})}\right).
	\end{equation}
\end{proposition}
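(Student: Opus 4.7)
The plan is to decompose $W$ via the operator $\mathbf{S}_{\Omega}^{(a,b)}$ into a Theodorescu piece (which has good global $W^{1,p}$ bounds) plus a $\mathbb{B}$-analytic remainder, and then bound the remainder on $G$ by an interior Cauchy-type estimate.

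I would start by setting $H := \mathbf{S}_{\Omega}^{(a,b)}W = W - \mathbf{T}_{\Omega}[\mathbf{Q}_{(a,b)}W]$. By Proposition \ref{propositiondiferenciability}(i), $H \in \overline{\mathfrak{D}}_p(\Omega;\mathbb{B})$ with $\overline{\boldsymbol{\partial}}H = (\overline{\boldsymbol{\partial}}-\mathbf{Q}_{(a,b)})W = V$ weakly. Since $\overline{\boldsymbol{\partial}}\mathbf{T}_{\Omega}V = V$ by Proposition \ref{remarktheodorescuop}(ii), the difference $G_0 := H - \mathbf{T}_{\Omega}V$ is weakly $\mathbb{B}$-analytic and lies in $L_p(\Omega;\mathbb{B})$; Remark \ref{reamkWeyllemma} then places $G_0$ in $\mathcal{A}^p(\Omega;\mathbb{B})$. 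This yields the representation
\begin{equation*}
W \;=\; G_0 \,+\, \mathbf{T}_{\Omega}V \,+\, \mathbf{T}_{\Omega}[\mathbf{Q}_{(a,b)}W].
\end{equation*}
The two Theodorescu summands are controlled globally using Proposition \ref{remarktheodorescuop}(iii) together with \eqref{normoperatorQ}, giving
\begin{equation*}
\|\mathbf{T}_{\Omega}V\|_{W^{1,p}(\Omega;\mathbb{B})} + \|\mathbf{T}_{\Omega}[\mathbf{Q}_{(a,b)}W]\|_{W^{1,p}(\Omega;\mathbb{B})} \;\leqslant\; C\bigl(\|V\|_{L_p(\Omega;\mathbb{B})} + \|W\|_{L_p(\Omega;\mathbb{B})}\bigr),
\end{equation*}
with $C$ depending on $\Omega$, $p$, $\|a\|_{L_\infty}$ and $\|b\|_{L_\infty}$, and these bounds are inherited on $G$.

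It remains to bound $G_0$ on $G$. Writing $G_0 = \mathbf{p}^+ G_0^+ + \mathbf{p}^- G_0^-$ with $G_0^-$ analytic and $G_0^+$ anti-analytic, the task reduces to the classical complex interior estimate: on any disk $D(z_0,\rho)$ with $\overline{D(z_0,2\rho)}\subset \Omega$, the pointwise values $|G_0^{\pm}(\zeta)|$ and their complex derivatives at $\zeta\in D(z_0,\rho)$ are controlled by $L_p$ averages of $|G_0^{\pm}|$ on $D(z_0,2\rho)$ (subharmonicity of $|G_0^{\pm}|^p$ for the zeroth order; Cauchy's integral formula for derivatives together with H\"older's inequality on the circle). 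Taking $\rho = \operatorname{dist}(G,\partial\Omega)/4$ and a finite subcover of $G$ by such disks yields
\begin{equation*}
\|G_0\|_{W^{1,p}(G;\mathbb{B})} \;\leqslant\; C_G \|G_0\|_{L_p(\Omega;\mathbb{B})} \;\leqslant\; C_G'\bigl(\|W\|_{L_p(\Omega;\mathbb{B})} + \|V\|_{L_p(\Omega;\mathbb{B})}\bigr),
\end{equation*}
where in the last step I use the $L_p$-boundedness of $\mathbf{T}_{\Omega}$ from Proposition \ref{remarktheodorescuop}(ii) to absorb the Theodorescu contributions in $G_0 = W - \mathbf{T}_\Omega V - \mathbf{T}_\Omega[\mathbf{Q}_{(a,b)}W]$. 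Combining the three bounds produces \eqref{estimatenorm1}. The only nonroutine ingredient is the interior $L_p$–$W^{1,p}$ estimate for the analytic remainder, but the $\mathbf{p}^{\pm}$-decomposition reduces it immediately to the standard complex case, so no machinery beyond Section 2 is needed.
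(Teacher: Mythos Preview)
Your argument is correct, but it follows a different path from the paper's proof. The paper does not decompose $W$ via $\mathbf{S}_{\Omega}^{(a,b)}$ at all; instead it introduces an intermediate open set $B$ with $G\Subset B\Subset\Omega$ and a cut-off function $\eta\in C_0^{\infty}(\Omega)$ with $\eta\equiv 1$ on $G$ and $\operatorname{supp}\eta\subset B$. Since $W\in W^{1,p}_{loc}(\Omega;\mathbb{B})$ by Proposition~\ref{propositiondiferenciability}(ii), the product $U=\eta W$ lies in $W_0^{1,p}(\Omega;\mathbb{B})$, so the Borel--Pompeiu identity $U=\mathbf{T}_{\Omega}\overline{\boldsymbol{\partial}}U$ applies. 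Expanding $\overline{\boldsymbol{\partial}}(\eta W)=\eta(\mathbf{Q}_{(a,b)}W+V)+W\overline{\boldsymbol{\partial}}\eta$ gives $W|_G$ as a sum of three Theodorescu images, each of which is bounded in $W^{1,p}(\Omega;\mathbb{B})$ by Proposition~\ref{remarktheodorescuop}(iii). The constant $C_G$ then depends only on $\operatorname{diam}(\Omega)$, $\|\mathbf{Q}_{(a,b)}\|$, and $\|\eta\|_{W^{1,\infty}}$.

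The main difference is where the localization happens. The paper localizes \emph{before} applying any integral representation, so no $\mathbb{B}$-analytic remainder appears and no separate interior estimate is needed---everything is a Theodorescu term. Your route localizes \emph{after} the global decomposition, which forces you to establish the interior $L_p$--$W^{1,p}$ estimate for $\mathbb{B}$-analytic functions; this is indeed elementary via the $\mathbf{p}^{\pm}$-splitting and Cauchy's formula, as you note, but it is an extra step. The paper's approach is slightly more economical and makes the form of $C_G$ explicit, while yours has the pedagogical advantage of reusing the decomposition $W=G_0+\mathbf{T}_{\Omega}V+\mathbf{T}_{\Omega}[\mathbf{Q}_{(a,b)}W]$ that already underlies Corollary~\ref{coroOperatorS} and Proposition~\ref{propregularityofsolutions}.
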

\begin{proof}
	Let $B$ be an open set with $G\Subset B\Subset \Omega$ and $\eta\in C_0^{\infty}(\Omega;\mathbb{B})$ a cut-off function satisfying $\eta\equiv 1$ in $G$ and with $\operatorname{supp}\eta \subset B$. Hence $U=\eta W\in W_0^{1,p}(\Omega)$ (due to Proposition \ref{propositiondiferenciability}), and by Proposition \ref{remarktheodorescuop}(vi), $U=\mathbf{T}_{\Omega}\overline{\boldsymbol{\partial}}U$. Thus, 
	\[
	U=\mathbf{T}_{\Omega}\left[\eta\overline{\boldsymbol{\partial}}W+W\overline{\boldsymbol{\partial}}\eta\right]=\mathbf{T}_{\Omega}\left[\eta\mathbf{Q}_{a,b}W\right]+\mathbf{T}_{\Omega}\left[\eta V\right]+\mathbf{T}_{\Omega}\left[W\overline{\boldsymbol{\partial}}\eta\right].
	\]
	In particular, the right-hand side equals $W(z)$ for a.e. $z\in B$. Hence 
	\begin{align*}
		\|W|_D\|_{W^{1,p}(G;\mathbb{B})}& \leqslant \|\mathbf{T}_{\Omega}\left[\eta\mathbf{Q}_{a,b}W\right]\|_{W^{1,p}(\Omega;\mathbb{B})}+\|\mathbf{T}_{\Omega}\left[\eta V\right]\|_{W^{1,p}(\Omega;\mathbb{B})}+\|\mathbf{T}_{\Omega}\left[W\overline{\boldsymbol{\partial}}\eta\right]\|_{W^{1,p}(\Omega;\mathbb{B})}\\
		&\leqslant 2\operatorname{diam}(\Omega)\left[\|\eta\|_{L_{\infty}(\Omega;\mathbb{B})}\left(M_1\|W\|_{L_p(\Omega;\mathbb{B})}+\|V\|_{L_p(\Omega;\mathbb{B})}\right)+\|\overline{\boldsymbol{\partial}}\eta\|_{L_{\infty}(\Omega;\mathbb{B})}\|W\|_{L_p(\Omega;\mathbb{B})}\right],
	\end{align*}
	where $M_1=\|\mathbf{Q}_{(a,b)}\|_{\mathcal{B}(L_p(\Omega;\mathbb{B}))}$. Consequently, we obtain (\ref{estimatenorm1}) with\\ $C_G=2\operatorname{diam}(\Omega)\|\eta\|_{W^{1,\infty}(\Omega;\mathbb{B})}\max\{M_1,1\}$.
\end{proof}

\begin{proposition}\label{propregularityofsolutions}
	Let $W\in \overline{\mathfrak{D}}_p(\Omega;\mathbb{B})$ be a  solution of (\ref{nonhomogeneousVekua}) with $V\in L_q(\Omega; \mathbb{B})$ and $q\geqslant p$, $q>2$. The following statements hold.
	\begin{itemize}
		\item[(i)] If $p<2$, then $W\in C_{locdisk}^{0,1-\frac{2}{r}}(\Omega;\mathbb{B})$ for all $2<r<\min\{p^*,q\}$.
		\item[(ii)] If $p\geqslant 2$, then  $W\in C_{locdisk}^{0,1-\frac{2}{q}}(\Omega; \mathbb{B})$.
		\item[(iii)] $\mathcal{A}_{(a,b)}^p(\Omega;\mathbb{B})\subset C^{0,1-\frac{2}{r}}_{locdisk}(\Omega;\mathbb{B})$ for all $2<r<p^*$, if $p<2$, and for all $p<r<\infty$, if $p\geqslant 2$.
	\end{itemize}

\end{proposition}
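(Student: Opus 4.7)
The plan is to bootstrap regularity via a localized Borel--Pompeiu representation. As a starting point, Proposition \ref{propositiondiferenciability}(ii) applies because $V\in L_q(\Omega;\mathbb{B})\subset L_p(\Omega;\mathbb{B})$ (since $\Omega$ is bounded and $q\geqslant p$), giving $W\in W^{1,p}_{loc}(\Omega;\mathbb{B})$. From there, the Sobolev embeddings recorded in Remark \ref{remarksobolevembedding} will upgrade the local integrability of $W$, and hence that of $\mathbf{Q}_{(a,b)}W$; the target Hölder exponent is then obtained by invoking the fact that $\mathbf{T}_{\Omega}$ maps $L_s(\Omega;\mathbb{B})$ into $W^{1,s}(\Omega;\mathbb{B})\hookrightarrow C^{0,1-2/s}(\overline{\Omega};\mathbb{B})$ for $s>2$, via Proposition \ref{remarktheodorescuop}(iii) combined with Remark \ref{remarksobolevembedding}.

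Concretely, fix a disk $D\Subset \Omega$, choose nested open sets $D\Subset D_1\Subset D_2\Subset \Omega$, and pick a cut-off $\eta\in C_0^{\infty}(D_2;\mathbb{B})$ with $\eta\equiv 1$ on $D_1$. Since $W\in W^{1,p}_{loc}(\Omega;\mathbb{B})$ one has $\eta W\in W_0^{1,p}(\Omega;\mathbb{B})$, and combining the bicomplex Borel--Pompeiu formula (Proposition \ref{remarktheodorescuop}(vi)) with $\overline{\boldsymbol{\partial}}W=\mathbf{Q}_{(a,b)}W+V$ yields
\begin{equation*}
\eta W \;=\; \mathbf{T}_{\Omega}\bigl[\eta\,\mathbf{Q}_{(a,b)}W\bigr]\;+\;\mathbf{T}_{\Omega}[\eta V]\;+\;\mathbf{T}_{\Omega}\bigl[W\,\overline{\boldsymbol{\partial}}\eta\bigr].
\end{equation*}
On $D_1$ the left-hand side is $W$. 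The third term is $\mathbb{B}$-analytic on $D_1$, because $W\,\overline{\boldsymbol{\partial}}\eta$ vanishes there and $\overline{\boldsymbol{\partial}}\mathbf{T}_{\Omega}=\operatorname{id}$ by Proposition \ref{remarktheodorescuop}(ii); Remark \ref{remarkWeyllema} then makes its restriction to $\overline{D}$ smooth. The second term $\mathbf{T}_{\Omega}[\eta V]$ lies in $W^{1,q}(\Omega;\mathbb{B})\hookrightarrow C^{0,1-2/q}(\overline{\Omega};\mathbb{B})$ since $q>2$.

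The remaining term $\mathbf{T}_{\Omega}[\eta\,\mathbf{Q}_{(a,b)}W]$ is handled by Sobolev upgrading of $W$. In case (ii) with $p=2$, Remark \ref{remarksobolevembedding} gives $W\in L_{r',loc}(\Omega;\mathbb{B})$ for every $r'<\infty$, so $\eta\,\mathbf{Q}_{(a,b)}W\in L_q(\Omega;\mathbb{B})$ and Proposition \ref{remarktheodorescuop}(iii) places $\mathbf{T}_{\Omega}[\eta\,\mathbf{Q}_{(a,b)}W]$ in $W^{1,q}\hookrightarrow C^{0,1-2/q}$; the case $p>2$ is similar with $W\in L_{\infty,loc}(\Omega;\mathbb{B})$. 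In case (i) with $p<2$, given $r\in (2,\min\{p^*,q\})$, Sobolev yields $W\in L_{r,loc}(\Omega;\mathbb{B})$, hence $\eta\,\mathbf{Q}_{(a,b)}W\in L_r(\Omega;\mathbb{B})$, and so $\mathbf{T}_{\Omega}[\eta\,\mathbf{Q}_{(a,b)}W]\in W^{1,r}\hookrightarrow C^{0,1-2/r}$; since $r<q$, also $\mathbf{T}_{\Omega}[\eta V]\in W^{1,r}\hookrightarrow C^{0,1-2/r}$. Summing the three contributions on $\overline{D}$ produces the Hölder exponents asserted in (i) and (ii). Part (iii) then follows immediately by specializing (i) and (ii) to $V\equiv 0$ (equivalently, letting $q$ be arbitrary), which removes the constraint $r<q$ and enlarges the admissible range of exponents accordingly.

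The main technical subtlety lies in the ``analytic remainder'' $\mathbf{T}_{\Omega}[W\,\overline{\boldsymbol{\partial}}\eta]$: one must arrange the nested chain $D\Subset D_1\Subset D_2$ so that $\operatorname{supp}\overline{\boldsymbol{\partial}}\eta$ is at positive distance from $\overline{D}$, and then promote this term from its a priori $L^p$ status to $C^{\infty}$ on a neighborhood of $\overline{D}$ via Weyl's lemma, so that it contributes nothing to the final Hölder exponent. Everything else reduces to routine bookkeeping with Proposition \ref{remarktheodorescuop}(iii) and Remark \ref{remarksobolevembedding}.
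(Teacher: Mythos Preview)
Your proof is correct and follows essentially the same strategy as the paper's: localize, split $W$ into a $\mathbb{B}$-analytic piece plus a Theodorescu transform, upgrade the integrability of $\mathbf{Q}_{(a,b)}W$ via the Sobolev embeddings of Remark~\ref{remarksobolevembedding}, and then invoke the $L_s\to W^{1,s}\hookrightarrow C^{0,1-2/s}$ mapping property of $\mathbf{T}$. The paper localizes more directly by restricting to a disk $D$ and writing $W|_D = G + \mathbf{T}_D(\mathbf{Q}_{(a,b)}W|_D + V|_D)$ with $G\in\mathcal{A}^p(D;\mathbb{B})$ (via \eqref{relationpartialS}), which spares you the cut-off and the ``analytic remainder'' term $\mathbf{T}_{\Omega}[W\,\overline{\boldsymbol{\partial}}\eta]$, but the substance is identical.
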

\begin{proof}
	Take an arbitrary disk $D\Subset \Omega$. By \eqref{relationpartialS}, we obtain the equality
	\begin{equation*}
		W|_D=G+\mathbf{T}_D\left(\mathbf{Q}_{(a,b)}W|_D+V|_D\right)\quad \mbox{with } G\in \mathcal{A}^p(D;\mathbb{B}).
	\end{equation*}
	Since $G\in C^{\infty}(D;\mathbb{B})$, we focus on the integral $\mathbf{T}_D\left(\mathbf{Q}_{(a,b)}W|_D+V|_D\right)$.
	\begin{itemize}
		\item[(i)] When $1<p<2$, we know that $p^*>2$. Taking $2<r<\min\{p^*,q\}$, by Remark \ref{remarksobolevembedding} and Proposition \ref{propositiondiferenciability}(ii), $W|_D\in L_r(D;\mathbb{B})$, and hence $\mathbf{Q}_{(a,b)}W|_D+V|_D\in L_r(D;\mathbb{B})$. Due to Proposition \ref{remarktheodorescuop}(iii) and Remark \ref{remarksobolevembedding}, $\mathbf{T}_{D} \left(\mathbf{Q}_{(a,b)}W|_D+V|_D\right)\in C^{0,1-\frac{2}{r}}(\overline{D};\mathbb{B})$. By the arbitrariness of $D$, we obtain (i).
		\item[(ii)] In the case $p\geqslant 2$, by Remark \ref{remarksobolevembedding} and Proposition \ref{propositiondiferenciability}, $\mathbf{Q}_{(a,b)}W|_D+V|_D\in L_q(D;\mathbb{D})$. Again, by  Proposition \ref{remarktheodorescuop}(iii) and Remark \ref{remarksobolevembedding}, $\mathbf{T}_{D} \left(\mathbf{Q}_{(a,b)}W|_D+V|_D\right)\in C^{0,1-\frac{2}{q}}(\overline{D};\mathbb{B})$. Due to the arbitrariness of $D$, we conclude (ii).
		\item[(iii)] Taking $V\equiv 0$, this follows from points (i) and (ii).
	\end{itemize}
\end{proof}´

We recall that for a bounded domain $G\subset \mathbb{C}$, a function $W$ belongs to the class $C^{1,\epsilon}(\overline{G};\mathbb{B})$, $\epsilon\in (0,1]$, if $W\in C^1(\overline{G};\mathbb{B})$ and its partial derivatives belong to $C^{0,\epsilon}(\overline{G};\mathbb{B})$. We say that $W\in C^{1,\epsilon}_{locdisk}(\Omega;\mathbb{B})$ if $W|_D\in C^{1,\epsilon}(\overline{D};\mathbb{B})$ for any open disk $D\Subset \Omega$.

\begin{proposition}\label{propositionregularclassicalsol}
	If $a,b,V\in C^{0,\epsilon}_{locdisk}(\Omega;\mathbb{B})$, then every solution $W\in \overline{\mathfrak{D}}_2(\Omega;\mathbb{B})$ of (\ref{nonhomogeneousVekua}) belongs to $C^{1,\epsilon}_{locdisk}(\Omega;\mathbb{B})$. In particular $\mathcal{A}^p_{(a,b)}(\Omega;\mathbb{B})\subset C^{1,\epsilon}_{locdisk}(\Omega;\mathbb{B})$.
\end{proposition}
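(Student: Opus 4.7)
The strategy is a standard bootstrap: first show $W$ is locally H\"older continuous of some exponent, then invoke the mapping property of the Theodorescu operator on H\"older spaces to gain one derivative, and finally iterate once to reach the sharp exponent $\epsilon$. Throughout, fix an arbitrary open disk $D \Subset \Omega$, and note that since $\overline{D} \subset \Omega$, the hypothesis gives $a|_D,b|_D,V|_D \in C^{0,\epsilon}(\overline{D};\mathbb{B}) \subset L_{\infty}(\overline{D};\mathbb{B})$.

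The starting point is to apply Proposition \ref{propregularityofsolutions} on $D$: since $V$ is locally bounded, we may take $q$ arbitrarily large, which yields $W|_D \in C^{0,\alpha}_{locdisk}(D;\mathbb{B})$ for some (in fact every) $\alpha \in (0,1)$. Combined with the H\"older regularity of $a,b$, this gives $F := \mathbf{Q}_{(a,b)}W|_D + V|_D \in C^{0,\beta}(\overline{D'};\mathbb{B})$ on every subdisk $D' \Subset D$, with $\beta = \min(\alpha,\epsilon)$. From the proof of Proposition \ref{propregularityofsolutions}, one has the representation
\begin{equation*}
W|_{D'} = G + \mathbf{T}_{D'}F, \qquad G \in \mathcal{A}^2(D';\mathbb{B}),
\end{equation*}
so $G$ is $C^{\infty}$ on $D'$ and the regularity of $W$ is reduced to that of $\mathbf{T}_{D'}F$.

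The crucial ingredient is the classical estimate for the complex Theodorescu operator on H\"older spaces: if $g \in C^{0,\gamma}(\overline{D'})$ with $\gamma \in (0,1)$, then $A_{D'}g, B_{D'}g \in C^{1,\gamma}(\overline{D'})$, with $\frac{\partial}{\partial z^*}A_{D'}g = g$ (see Vekua's theorem on Theodorescu, e.g.\ \cite{vekua0}). Via the decomposition $\mathbf{T}_{D'}F = \mathbf{p}^{+}B_{D'}F^{+} + \mathbf{p}^{-}A_{D'}F^{-}$ from Proposition \ref{remarktheodorescuop}(i), this transfers verbatim to the bicomplex setting: $\mathbf{T}_{D'}: C^{0,\gamma}(\overline{D'};\mathbb{B}) \to C^{1,\gamma}(\overline{D'};\mathbb{B})$ is bounded. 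Applying this with $\gamma = \beta$ produces $W \in C^{1,\beta}_{locdisk}(\Omega;\mathbb{B})$. In particular $W$ is locally Lipschitz, so $W|_{D''} \in C^{0,\epsilon}(\overline{D''};\mathbb{B})$ on every subdisk $D'' \Subset D'$, whence $F \in C^{0,\epsilon}(\overline{D''};\mathbb{B})$. A second application of the Theodorescu mapping property upgrades this to $W \in C^{1,\epsilon}_{locdisk}(\Omega;\mathbb{B})$. The case $\mathcal{A}^p_{(a,b)}(\Omega;\mathbb{B}) \subset C^{1,\epsilon}_{locdisk}(\Omega;\mathbb{B})$ follows by taking $V \equiv 0$ and invoking Proposition \ref{propositiondiferenciability}(ii) to place a given $L_p$-solution in $\overline{\mathfrak{D}}_2(D;\mathbb{B})$ on each disk $D$.

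The main obstacle is the H\"older-to-H\"older mapping property of $\mathbf{T}_{D'}$ with gain of one derivative; unlike the Sobolev and $L_p$-statements already collected in Proposition \ref{remarktheodorescuop}, this requires a pointwise analysis of the singular integral defining $\partial_z A_{D'}g$ (essentially the Beurling transform of $g$) and is not a direct consequence of the results recorded earlier in the paper. Everything else is a routine restriction-to-disk argument and a two-step bootstrap.
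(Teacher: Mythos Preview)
Your proof is correct and follows essentially the same route as the paper: represent $W$ locally as a $\mathbb{B}$-analytic function plus $\mathbf{T}_D(\mathbf{Q}_{(a,b)}W+V)$, use Proposition~\ref{propregularityofsolutions} to make $W$ locally H\"older, and then invoke the $C^{0,\epsilon}\to C^{1,\epsilon}$ mapping property of the Theodorescu operator (the paper cites \cite[Prop.~10(v)]{CamposBicomplex} for this). Your second bootstrap step is unnecessary: since you already note that $W\in C^{0,\alpha}$ for \emph{every} $\alpha\in(0,1)$, you may take $\alpha\geqslant\epsilon$ from the outset---the paper does this directly by choosing $q=\tfrac{2}{1-\epsilon}$ in Proposition~\ref{propregularityofsolutions}(ii), which yields $W\in C^{0,\epsilon}$ in one stroke.
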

\begin{proof}
	Let $D\Subset \Omega$ be a disk, and write $W|_D=G+\mathbf{T}_D(\mathbf{Q}_{(a,b)}W|_D+V|_D)$ with $G\in \mathcal{A}^p(D;\mathbb{B})$. Since $G\in C^{\infty}(D)$, it only remains to analyze  $\mathbf{T}_{D}(\mathbf{Q}_{(a,b)}W|_D+V|_D)$. Using that for any disk $B$ with $D\Subset B\Subset \Omega$, $V|_B\in L_q(B;\mathbb{D})$, with $q=\frac{2}{1-\epsilon}>2$ , Proposition \ref{propregularityofsolutions}(ii) implies that $W|_D\in C^{0,\epsilon}(\overline{D};\mathbb{B})$. Since $a|_D,b|_D\in C^{0,\epsilon}(\overline{D};\mathbb{B})$, by \cite[Prop. 1.2.2]{holderfiorenza}, $\mathbf{Q}_{(a,b)}W|_D\in C^{0,\epsilon}(\overline{B};\mathbb{B})$. Consequently,  $\mathbf{T}_D(\mathbf{Q}_{(a,b)}W|_D+V|_D)\in C^{1,\epsilon}(\overline{D};\mathbb{B})$ \cite[Prop. 10 (v)]{CamposBicomplex}. Therefore $W\in C^{1,\epsilon}_{locdisk}(\Omega;\mathbb{B})$. 
\end{proof}
\begin{remark}\label{Remarkclassicalsolutions}
	For $a,b\in C^{0,\epsilon}_{locdisk}(\Omega;\mathbb{B})$, let $\mathcal{V}_{(a,b)}(\Omega;\mathbb{B})$ be the class of classical solutions $W\in C^1(\Omega;\mathbb{B})$ of (\ref{bicomplexvekua}). Proposition \ref{propositionregularclassicalsol} implies that $\mathcal{A}^p_{(a,b)}(\Omega;\mathbb{B})=\mathcal{V}_{(a,b)}(\Omega;\mathbb{B})\cap L_p(\Omega;\mathbb{B})$. In this case, the theory of bicomplex classical solutions of (\ref{bicomplexvekua}) was developed in \cite{CamposBicomplex}. The space $\mathcal{V}_{(a,b)}(\Omega;\mathbb{B})$ is closed in the Fr\'echet space $C(\Omega;\mathbb{B})$, with respect to the topology of the uniform convergence on compact subsets \cite[Th. 13]{CamposBicomplex}.
\end{remark}

By Proposition \ref{propregularityofsolutions}, every $W\in \mathcal{A}_{(a,b)}^p(\Omega;\mathbb{B})$ is continuous in $\Omega$ and for every $z\in \Omega$, the evaluation map $\mathcal{A}_{(a,b)}^2(\Omega;\mathbb{B})\ni W\mapsto W(z)\in \mathbb{B}$ is well-defined.

\begin{remark}\label{Remarkevalfunctionalclassicalbergman}
	Consider the case $a\equiv b\equiv 0$. Let $K\subset \Omega$ be compact. Given $W\in \mathcal{A}^p(\Omega;\mathbb{B})$, we have that $W^{+}\in \overline{\mathcal{A}}^p(\Omega)$ and $W^{-}\in \mathcal{A}^p(\Omega)$, the complex anti-analytic and analytic Bergman spaces, respectively. Then there exist constants $C_K^1$, $C_K^2$ depending only on $K$ such that 
	\[
	\max_{z\in K}|W^{-}(z)|\leqslant C_K^1\|W^{-}\|_{L_p(\Omega)} \quad \mbox{and }\quad \max_{z\in K}|W^{+}(z)|\leqslant C_K^2\|W^{+}\|_{L_p(\Omega)}
	\]
	(see \cite[Ch. I]{duren}). Thus, $\displaystyle \max_{z\in K}|W(z)|_{\mathbb{B}}\leqslant C_K\|W\|_{L_p(\Omega;\mathbb{B})}$ with $C_K=2\max\{C_K^1,C_K^2\}$.
\end{remark}

\begin{theorem}\label{theorembergmankernel}
	For every $K\subset \Omega$ compact, there exists a constant $C_K>0$ such that 
	\begin{equation}\label{boundednessofevalfunctional}
		\max_{z\in K}|W(z)|_{\mathbb{B}}\leqslant C_K\|W\|_{L_p(\Omega)},\quad \forall W\in \mathcal{A}_{(a,b)}^p(\Omega;\mathbb{B}).
	\end{equation}
\end{theorem}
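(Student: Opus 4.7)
The idea is to combine the interior estimate of Proposition~\ref{Propestimatenorm}, specialized to $V\equiv 0$, with the Sobolev embeddings recorded in Remark~\ref{remarksobolevembedding}, using a one-step bootstrap when $p\leqslant 2$ to raise the local exponent above the critical value $2$ so as to land in a Sobolev space that embeds continuously into $C(\overline{G};\mathbb{B})$.

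\emph{Case $2<p\leqslant\infty$.} Cover $K$ by an open set $G$ (e.g.\ a finite union of disks) with $K\subset G\Subset \Omega$. Proposition~\ref{Propestimatenorm} gives $\|W\|_{W^{1,p}(G;\mathbb{B})}\leqslant C_G\|W\|_{L_p(\Omega;\mathbb{B})}$, and since $p>2$ the embedding $W^{1,p}(G;\mathbb{B})\hookrightarrow C(\overline{G};\mathbb{B})$ of Remark~\ref{remarksobolevembedding} converts this into the desired pointwise bound on $\overline{G}\supset K$. For $p=\infty$ the same chain applies; alternatively one can use the splitting $W=\mathbf{S}_{\Omega}^{(a,b)}W+\mathbf{T}_{\Omega}[\mathbf{Q}_{(a,b)}W]$ from \eqref{operatorS}: Corollary~\ref{coroOperatorS} puts the first summand in $\mathcal{A}^{\infty}(\Omega;\mathbb{B})$, while Proposition~\ref{remarktheodorescuop}(iii) bounds the $C^{0,\epsilon}$-norm of the second by $\|W\|_{L_{\infty}(\Omega;\mathbb{B})}$.

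\emph{Case $1<p\leqslant 2$.} Here $W^{1,p}(G;\mathbb{B})$ fails to embed into $C(\overline{G};\mathbb{B})$, so one application of Proposition~\ref{Propestimatenorm} is not enough. Fix $K\subset G'\Subset G\Subset \Omega$. A first application yields $\|W\|_{W^{1,p}(G;\mathbb{B})}\leqslant C_1\|W\|_{L_p(\Omega;\mathbb{B})}$. Because $p^{*}=2p/(2-p)>2$ for every $p>1$, Remark~\ref{remarksobolevembedding} supplies some $q\in (2,p^{*})$ (or any $q\in(2,\infty)$ when $p=2$) with $\|W|_G\|_{L_q(G;\mathbb{B})}\leqslant C_2\|W\|_{L_p(\Omega;\mathbb{B})}$. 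Since $W|_G$ still solves the same homogeneous Vekua equation on the bounded domain $G$, now with right-hand side $0\in L_q(G;\mathbb{B})$ and coefficients $a,b\in L_{\infty}(G;\mathbb{B})$, a second application of Proposition~\ref{Propestimatenorm}---with $p$ replaced by $q$ and $\Omega$ replaced by $G$---to the pair $G'\Subset G$ produces $\|W\|_{W^{1,q}(G';\mathbb{B})}\leqslant C_3\|W\|_{L_p(\Omega;\mathbb{B})}$. Since $q>2$, the embedding $W^{1,q}(G';\mathbb{B})\hookrightarrow C(\overline{G'};\mathbb{B})$ now completes the pointwise estimate on $K\subset \overline{G'}$.

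The main obstacle is the subcritical range $p\leqslant 2$: by itself the interior estimate of Proposition~\ref{Propestimatenorm} does not control pointwise values. The saving observation is that $p^{*}>2$ as soon as $p>1$, so a single Sobolev step already crosses the critical threshold, after which one more interior estimate followed by a Sobolev--Morrey embedding closes the argument.
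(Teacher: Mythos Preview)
Your argument is correct. The route differs slightly from the paper's, so a brief comparison is warranted.

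The paper localizes to a disk $B\Supset \overline{D}$, splits $W|_B = H + \mathbf{T}_B\mathbf{Q}_{(a,b)}W|_B$ with $H=\mathbf{S}_B^{(a,b)}W|_B\in\mathcal{A}^p(B;\mathbb{B})$, and estimates the two pieces separately: the $\mathbb{B}$-analytic part $H$ via the classical Bergman pointwise bound of Remark~\ref{Remarkevalfunctionalclassicalbergman}, and the Theodorescu part by the chain $\|W\|_{L_p(\Omega)}\to\|W|_B\|_{W^{1,p}(B)}\to\|W|_B\|_{L_r(B)}\to\|\mathbf{T}_B\mathbf{Q}_{(a,b)}W|_B\|_{W^{1,r}(B)}\to C(\overline{D})$ for some $r>2$. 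You bypass the splitting entirely: one application of Proposition~\ref{Propestimatenorm} followed by the Sobolev step $W^{1,p}\hookrightarrow L_q$ (with $q>2$ available precisely because $p>1$), then a \emph{second} application of Proposition~\ref{Propestimatenorm} with exponent $q$ on a smaller domain, then $W^{1,q}\hookrightarrow C$. This is a cleaner packaging of the same bootstrap; what you gain is that Remark~\ref{Remarkevalfunctionalclassicalbergman} is no longer needed as a separate ingredient, what the paper gains is that a single interior estimate suffices because the Theodorescu operator already carries the $L_r\to W^{1,r}$ smoothing.

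Two minor technical remarks. First, the Sobolev embeddings of Remark~\ref{remarksobolevembedding} are stated for $C^1$ domains, so ``a finite union of disks'' may fail to qualify at intersection points; it is safer to work on a single disk and then cover $K$ by finitely many closed disks, as the paper does. Second, the $W^{1,p}$ conclusion of Proposition~\ref{Propestimatenorm} rests on $\mathbf{T}_\Omega\in\mathcal{B}(L_p,W^{1,p})$, which Proposition~\ref{remarktheodorescuop}(iii) gives only for $1<p<\infty$; your alternative argument for $p=\infty$ via the splitting \eqref{operatorS} and Proposition~\ref{remarktheodorescuop}(iii) is therefore the right way to handle that endpoint.
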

\begin{proof}
	First, consider the case when $K=\overline{D}$ is a closed disk. Let $B$ be an open disk with $\overline{D}\subset B\Subset \Omega$, and take $H=\mathbf{S}_{B}^{(a,b)}W|_B$. Since $H\in \mathcal{A}^p(B;\mathbb{B})$, by Remark \ref{Remarkevalfunctionalclassicalbergman}, there exits a constant $C_{\overline{D}}^1$ satisfying 
	\[
	\max_{z\in \overline{D}}|H(z)| \leqslant C_{\overline{D}}^1\|H\|_{L_p(B;\mathbb{B})}= C_{\overline{D}}^1\|\mathbf{S}_{B}^{(a,b)}W|_B\|_{L_p(B;\mathbb{B})}\leqslant C_{\overline{D}}^1M_1\|W\|_{L_p(\Omega;\mathbb{B})},
	\]
	where $M_1=\|\mathbf{S}_B^{(a,b)}\|_{\mathcal{B}(L_p(B;\mathbb{B}))}$. Due to Remark \ref{remarksobolevembedding} and the fact that $p>1$, there exits $r>2$ such that the embeddings
	\begin{equation}\label{aux2}
		W^{1,p}(B;\mathbb{B})\hookrightarrow L_r(B;\mathbb{B})\quad \mbox{ and } \;\, W^{1,r}(B;\mathbb{B})\hookrightarrow C^{0,1-\frac{2}{r}}(\overline{B};\mathbb{B})
	\end{equation}  are bounded, and let $\tilde{C}_B^r$ and $\hat{C}_B^r$ be their norms. On the other hand, by Proposition \ref{propositionregularclassicalsol}, there is a constant $\tilde{C}_B^{p}>0$, independent of $W$, such that $\|W|_B\|_{W^{1,p}(B;\mathbb{B})}\leqslant \tilde{C}_B^p\|W\|_{L_p(\Omega;\mathbb{B})}$. Since $W|_B\in W^{1,p}(B;\mathbb{B})$, by Proposition \ref{propositiondiferenciability} and \eqref{aux2}, $V=\mathbf{T}_B\mathbf{Q}_{(a,b)}W|_B\in W^{1,r}(B;\mathbb{B})$. Taking $M_2=\|\mathbf{T}_B\mathbf{Q}_{(a,b)}\|_{\mathcal{B}(L_r(B;\mathbb{B}),W^{1,r}(B;\mathbb{B}))}$, we obtain
	\begin{align*}
		\max_{z\in \overline{D}}|V(z)|_{\mathbb{B}}&\leqslant \|V\|_{C^{0,1-\frac{2}{r}}(B;\mathbb{B})}\leqslant \hat{C}_B^r\|V\|_{W^{1,r}(B;\mathbb{B})}\leqslant \hat{C}_B^rM_2\|W\|_{L_r(B;\mathbb{B})}\\
		&\leqslant \hat{C}_B^rM_2\tilde{C}_B^r\|W\|_{W^{1,p}(B;\mathbb{B})}\leqslant \hat{C}_B^rM_2\tilde{C}_B^r\tilde{C}_B^p\|W\|_{L_p(\Omega;\mathbb{B})}.
	\end{align*}	
	Taking $C_{\overline{D}}=\max\{C_{\overline{D}}^1M_1,\hat{C}_B^rM_2\tilde{C}_B^r\tilde{C}_B^p\}$, we have that $\max\limits_{z\in \overline{D}}|W(z)|_{\mathbb{B}}\leqslant C_{\overline{D}}\|W\|_{L_p(\Omega;\mathbb{B})}$. For a general compact subset $K$, take $D_1,\dots, D_N$ open disks with $K\subset \bigcup_{j=1}^N\overline{D}_j\subset \Omega$. Hence inequality (\ref{boundednessofevalfunctional}) is satisfied by taking $C_K=\displaystyle \max_{1\leqslant j\leqslant N}C_{\overline{D}_j}$.
\end{proof}

\section{The Bergman reproducing kernel}
From now on, we focus on the case $p=2$. Theorem \ref{theorembergmankernel} implies that for any $z\in \Omega$, the functionals
\[
\mathcal{A}^2_{(a,b)}(\Omega;\mathbb{B})\ni W\mapsto \operatorname{Sc}W(z), \operatorname{Vec}W(z)\in \mathbb{C}
\] are bounded with respect to the $L_2(\Omega;\mathbb{B})$-norm. By the Riesz representation theorem, there exist functions $K_z,L_z\in \mathcal{A}_{(a,b)}^2(\Omega;\mathbb{B})$ such that
\begin{equation}\label{reproducingkernel1}
	\operatorname{Sc}W(z)=\langle W,K_z\rangle_{L_2(\Omega;\mathbb{B})}, \quad \operatorname{Vec}W(z)=\langle W,L_z\rangle_{L_2(\Omega;\mathbb{B})},\quad \forall W\in \mathcal{A}_{(a,b)}^2(\Omega;\mathbb{B}).
\end{equation}
\begin{remark}\label{remarkbergmankernel2}	
	The functions $K_z(\zeta)$ and $L_z(\zeta)$ satisfy the relations
	\begin{equation}\label{relationsbergmankernel}
		\operatorname{Sc}K_z(\zeta)=(\operatorname{Sc}K_{\zeta}(z))^{*}, \quad \operatorname{Vec}L_z(\zeta)=(\operatorname{Vec}L_{\zeta}(z))^{*}, \quad \operatorname{Sc}L_z(\zeta)= (\operatorname{Vec}K_{\zeta}(z))^{*}.
	\end{equation}
	Indeed, since $K_z\in \mathcal{A}_{(a,b)}^2(\Omega; \mathbb{B})$ we have
	\[
	\operatorname{Sc}K_z(\zeta)= \langle K_z, K_{\zeta}\rangle_{L_2(\Omega;\mathbb{B})}= \langle K_{\zeta}, K_z\rangle_{L_2(\Omega;\mathbb{B})}^{*}=\left(\operatorname{Sc}K_{\zeta}\right)^*(z).
	\]
	The other two equalities are proved analogously.
\end{remark}
Using (\ref{relationsbergmankernel}), for any $W\in \mathcal{A}_{(a,b)}^2(\Omega; \mathbb{B})$ we obtain  the following relation:
\begin{align*}
	W(z)& = \operatorname{Sc}W(z)+\mathbf{j}\operatorname{Vec}W(z)\\
	& = \iint_{\Omega}\left( \operatorname{Sc}W(\zeta)(\operatorname{Sc}K_z(\zeta))^{*}+\operatorname{Vec}W(\zeta)(\operatorname{Vec}K_z(\zeta))^{*}\right)dA_{\zeta} \\
	&\quad + \mathbf{j}\iint_{\Omega}\left( \operatorname{Sc}W(\zeta)(\operatorname{Sc}L_z(\zeta))^{*}+\operatorname{Vec}W(\zeta)(\operatorname{Vec}L_z(\zeta))^{*}\right)dA_{\zeta}\\
	& = \iint_{\Omega}\left( \operatorname{Sc}W(\zeta)\operatorname{Sc}K_{\zeta}(z)+\operatorname{Vec}W(\zeta)\operatorname{Sc}L_{\zeta}(z)\right)dA_{\zeta} \\
	&\quad + \mathbf{j}\iint_{\Omega}\left( \operatorname{Sc}W(\zeta)\operatorname{Vec}K_{\zeta}(z)+\operatorname{Vec}W(\zeta)\operatorname{Vec}L_{\zeta}(z)\right)dA_{\zeta},
\end{align*}
from where we obtain the relation
\begin{equation}\label{bergmankernel1}
	W(z)=\iint_{\Omega} \left( \operatorname{Sc}W(\zeta)K_{\zeta}(z)+\operatorname{Vec}W(\zeta)L_{\zeta}(z)\right)dA_{\zeta}.
\end{equation}
For all $z,\zeta\in \Omega$, define $K(z,\zeta)=K_{\zeta}(z)$ and $L(z,\zeta)=L_{\zeta}(z)$. We introduce the definition of the Bergman kernel.

\begin{definition}
	The Bergman kernel of the space $\mathcal{A}_{(a,b)}^2(\Omega; \mathbb{B})$ with coefficient $A\in \mathbb{B}$ is defined by
	\begin{equation}\label{Bergmankernelvekua}
		\mathscr{K}_{\Omega}^{(a,b)}(A;z,\zeta):= \operatorname{Sc}(A) K(z,\zeta)+\operatorname{Vec}(A)L(z,\zeta), \quad z,\zeta\in \Omega. 
	\end{equation}
\end{definition}
The following reproducing property holds:
\begin{equation}\label{reproducingproperty}
	W(z)=\iint_{\Omega}\mathscr{K}_{\Omega}^{(a,b)}(W(\zeta);z,\zeta)dA_{\zeta}.
\end{equation}
This definition for the bicomplex Bergman kernel of the Vekua equation was introduced in \cite{minevekua1} for the case of the space of classical $L_2$-solutions of the {\it main Vekua equation} (defined in Section \ref{sectionmainvekua}).

Because the space is separable, it admits a countable orthonormal basis $\{\Phi_n\}_{n=0}^{\infty}$ \cite[Ch. I, Prop. 4.16]{conwayfunctional}. Since $\mathscr{K}_{\Omega}^{(a,b)}(A; \cdot, \zeta)\in \mathcal{A}_{(a,b)}^2(\Omega; \mathbb{B})$ for all $A\in \mathbb{B}$, $\zeta \in \Omega$, it possesses a Fourier series in terms of $\{\Phi_n\}_{n=0}^{\infty}$, and the form of its Fourier coefficients can be obtained in terms of the coefficient $A$.

\begin{proposition}\label{propexpansionkernelfourier}
	For any $A\in \mathbb{B}$, the kernel $\mathscr{K}_{\Omega}^{(a,b)}(A;z,\zeta)$ admits the Fourier series
	\begin{equation}\label{bergmankernelexpansion}
		\mathscr{K}_{\Omega}^{(a,b)}(A; z, \zeta)=\sum_{n=0}^{\infty}\langle  A, \Phi_n(\zeta)\rangle_{\mathbb{B}}\Phi_n(z).
	\end{equation}
	The series converges in the variable $z$ in the $L_2(\Omega;\mathbb{B})$-norm and uniformly on compact subsets of $\Omega$.
\end{proposition}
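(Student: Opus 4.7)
The plan is to fix $A \in \mathbb{B}$ and $\zeta \in \Omega$, expand the function $z \mapsto \mathscr{K}_{\Omega}^{(a,b)}(A; z, \zeta)$ in the orthonormal basis $\{\Phi_n\}$, identify the Fourier coefficients explicitly as $\langle A, \Phi_n(\zeta)\rangle_{\mathbb{B}}$, and finally upgrade $L_2$-convergence to uniform convergence on compact sets using the boundedness of the evaluation functional from Theorem \ref{theorembergmankernel}.

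First, I would note that $\mathscr{K}_{\Omega}^{(a,b)}(A; \cdot, \zeta) = \operatorname{Sc}(A)\,K_{\zeta} + \operatorname{Vec}(A)\,L_{\zeta}$ lies in $\mathcal{A}_{(a,b)}^2(\Omega; \mathbb{B})$ since both $K_{\zeta}$ and $L_{\zeta}$ do by construction via Riesz. The abstract Hilbert-space theory then produces the $L_2$-convergent expansion
\[
\mathscr{K}_{\Omega}^{(a,b)}(A; \cdot, \zeta) = \sum_{n=0}^{\infty} c_n(A, \zeta)\,\Phi_n, \qquad c_n(A,\zeta) = \langle \mathscr{K}_{\Omega}^{(a,b)}(A;\cdot,\zeta), \Phi_n \rangle_{L_2(\Omega;\mathbb{B})}.
\]

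Next I would compute $c_n$ explicitly. By $\mathbb{C}$-linearity of the kernel in $A$, the coefficient splits as
\[
c_n(A, \zeta) = \operatorname{Sc}(A)\,\langle K_{\zeta}, \Phi_n \rangle_{L_2(\Omega;\mathbb{B})} + \operatorname{Vec}(A)\,\langle L_{\zeta}, \Phi_n \rangle_{L_2(\Omega;\mathbb{B})}.
\]
Using conjugate symmetry of the inner product together with the defining relations \eqref{reproducingkernel1} applied to $\Phi_n$, one obtains $\langle K_{\zeta}, \Phi_n \rangle_{L_2(\Omega;\mathbb{B})} = (\operatorname{Sc}\Phi_n(\zeta))^*$ and analogously $\langle L_{\zeta}, \Phi_n \rangle_{L_2(\Omega;\mathbb{B})} = (\operatorname{Vec}\Phi_n(\zeta))^*$. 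Substituting these back and matching the explicit formula $\langle W, V \rangle_{\mathbb{B}} = (\operatorname{Sc}W)(\operatorname{Sc}V)^* + (\operatorname{Vec}W)(\operatorname{Vec}V)^*$ gives the claimed identity $c_n(A, \zeta) = \langle A, \Phi_n(\zeta) \rangle_{\mathbb{B}}$.

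Finally, to promote $L_2$-convergence in $z$ to uniform convergence on compacts, I would fix a compact $K \subset \Omega$ and apply Theorem \ref{theorembergmankernel} to the difference between $\mathscr{K}_{\Omega}^{(a,b)}(A; \cdot, \zeta)$ and the partial sum $S_N := \sum_{n=0}^{N} \langle A, \Phi_n(\zeta)\rangle_{\mathbb{B}}\,\Phi_n$; both terms lie in $\mathcal{A}_{(a,b)}^2(\Omega;\mathbb{B})$, so their difference does too, and the evaluation estimate converts $L_2$-decay into uniform decay on $K$. The only delicate step I expect is the bookkeeping of complex conjugates in passing from $\langle \Phi_n, K_{\zeta}\rangle = \operatorname{Sc}\Phi_n(\zeta)$ (the direct form of the Riesz identity) to $\langle K_{\zeta}, \Phi_n\rangle$; everything else is a straightforward application of results already established in the excerpt.
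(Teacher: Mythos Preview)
Your proposal is correct and follows precisely the approach the paper indicates: the paper defers the computation of the Fourier coefficients to \cite[Remark 17]{minevekua1} and invokes Theorem \ref{theorembergmankernel} for uniform convergence on compacts, which is exactly what you carry out in detail. Your bookkeeping of the conjugates via $\langle K_{\zeta},\Phi_n\rangle=(\operatorname{Sc}\Phi_n(\zeta))^{*}$ and the identification with $\langle A,\Phi_n(\zeta)\rangle_{\mathbb{B}}$ is correct.
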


\begin{proof}
	The proof of equality \eqref{bergmankernelexpansion} is the same as in \cite[Remark 17]{minevekua1}. The  uniform convergence on compact subsets of $\Omega$ in the variable $z$ follows from Theorem \ref{theorembergmankernel}.
\end{proof}

Since $\mathcal{A}_{(a,b)}^2(\Omega;\mathbb{B})$ is a closed subspace of $L_2(\Omega;\mathbb{B})$, there exists the bounded orthogonal projection of $L_2(\Omega;\mathbb{B})$ onto $\mathcal{A}_{(a,b)}^2(\Omega;\mathbb{B})$ \cite[Ch. I, Th. 2.7]{conwayfunctional}, denoted as $\mathbf{P}_{\Omega}^{(a,b)}$. We refer to it as the {\it Vekua-Bergman projection}. Similar to the case of the  bicomplex main Vekua equation \cite[Remark 18]{minevekua1}, the Vekua-Bergman projection can be written in terms of the kernel $\mathscr{K}_{\Omega}^{(a,b)}(A;z,\zeta)$. 

\begin{proposition}
	The Vekua-Bergman projection can be written as
	\begin{equation}\label{Vekua-Bergmanprojection}
		\mathbf{P}_{\Omega}^{(a,b)}\Psi(z)= \iint_{\Omega}\mathscr{K}_{\Omega}^{(a,b)}(\Psi(\zeta),z,\zeta)dA_{\zeta}, \qquad \forall \Psi \in L_2(\Omega;\mathbb{B}).
	\end{equation}
\end{proposition}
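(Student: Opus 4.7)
The key observation is that the formula should follow by combining the orthogonality that defines the projection with the reproducing identities (\ref{reproducingkernel1}). For any $\Psi\in L_2(\Omega;\mathbb{B})$, the residual $\Psi-\mathbf{P}_{\Omega}^{(a,b)}\Psi$ lies in $\left(\mathcal{A}_{(a,b)}^2(\Omega;\mathbb{B})\right)^{\perp}$, and both $K_z$ and $L_z$ belong to $\mathcal{A}_{(a,b)}^2(\Omega;\mathbb{B})$. Hence the reproducing identities (\ref{reproducingkernel1}), applied to $\mathbf{P}_{\Omega}^{(a,b)}\Psi$, upgrade to
\[
\operatorname{Sc}\left(\mathbf{P}_{\Omega}^{(a,b)}\Psi\right)(z)=\langle \Psi,K_z\rangle_{L_2(\Omega;\mathbb{B})},\qquad \operatorname{Vec}\left(\mathbf{P}_{\Omega}^{(a,b)}\Psi\right)(z)=\langle \Psi,L_z\rangle_{L_2(\Omega;\mathbb{B})},
\]
valid for every $\Psi\in L_2(\Omega;\mathbb{B})$, not just for elements of the Vekua-Bergman space.

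The second step is to identify each of these two inner products with the scalar and vector part, respectively, of the kernel integral in (\ref{Vekua-Bergmanprojection}). I would expand $\langle \Psi,K_z\rangle_{L_2(\Omega;\mathbb{B})}$ via (\ref{L2bicomplexnorm}) as $\iint_{\Omega}\bigl[(\operatorname{Sc}\Psi(\zeta))(\operatorname{Sc}K_z(\zeta))^{*}+(\operatorname{Vec}\Psi(\zeta))(\operatorname{Vec}K_z(\zeta))^{*}\bigr]dA_{\zeta}$ and then invoke the symmetries (\ref{relationsbergmankernel}). The first of those relations immediately gives $(\operatorname{Sc}K_z(\zeta))^{*}=\operatorname{Sc}K(z,\zeta)$; for the second factor, conjugating the third relation in Remark \ref{remarkbergmankernel2} and interchanging $z\leftrightarrow \zeta$ yields $(\operatorname{Vec}K_z(\zeta))^{*}=\operatorname{Sc}L(z,\zeta)$. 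By the definition (\ref{Bergmankernelvekua}) the integrand thus collapses to $\operatorname{Sc}\mathscr{K}_{\Omega}^{(a,b)}(\Psi(\zeta);z,\zeta)$. An entirely analogous treatment of $\langle \Psi,L_z\rangle_{L_2(\Omega;\mathbb{B})}$, using the remaining two symmetries to handle $(\operatorname{Sc}L_z(\zeta))^{*}$ and $(\operatorname{Vec}L_z(\zeta))^{*}$, produces an integrand equal to $\operatorname{Vec}\mathscr{K}_{\Omega}^{(a,b)}(\Psi(\zeta);z,\zeta)$.

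Reassembling $\mathbf{P}_{\Omega}^{(a,b)}\Psi(z)=\operatorname{Sc}\left(\mathbf{P}_{\Omega}^{(a,b)}\Psi\right)(z)+\mathbf{j}\operatorname{Vec}\left(\mathbf{P}_{\Omega}^{(a,b)}\Psi\right)(z)$ then yields (\ref{Vekua-Bergmanprojection}). No serious obstacle is anticipated: Theorem \ref{theorembergmankernel} has already secured $K_z,L_z\in \mathcal{A}_{(a,b)}^2(\Omega;\mathbb{B})$, and the argument is essentially bookkeeping that exploits the complex linearity of $\operatorname{Sc}$ and $\operatorname{Vec}$. The only point requiring a little care is the correct application of (\ref{relationsbergmankernel}) when transferring conjugated evaluations $(\operatorname{Sc}K_z(\zeta))^{*}$, $(\operatorname{Vec}K_z(\zeta))^{*}$, etc., into values of $K(z,\zeta)$ and $L(z,\zeta)$ with the roles of $z$ and $\zeta$ interchanged.
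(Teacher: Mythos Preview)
Your proposal is correct and follows essentially the same route as the paper: the computation in step~2 is exactly the one already carried out in the paper to derive the reproducing identity (\ref{bergmankernel1})--(\ref{reproducingproperty}), and your step~1 supplies the single extra ingredient (orthogonality of $\Psi-\mathbf{P}_{\Omega}^{(a,b)}\Psi$ to $K_z,L_z$) needed to pass from $W\in\mathcal{A}_{(a,b)}^2(\Omega;\mathbb{B})$ to arbitrary $\Psi\in L_2(\Omega;\mathbb{B})$. The paper itself merely cites \cite[Remark~18]{minevekua1}, so your writeup is in fact more self-contained.
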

\begin{proof}
	The proof is the same as in \cite[Remark 18]{minevekua1}.
\end{proof}

In the case $b=0$, the Vekua-Bergman space $\mathcal{A}^2_{(a,0)}(\Omega;\mathbb{B})$ becomes a $\mathbb{B}$-module. Consequently, certain properties are analogous to those observed for the complex Bergman spaces, beginning with the behavior of the reproducing kernel itself.

\begin{proposition}
	When $b=0$, for every $A\in \mathbb{B}$ we have $\mathscr{K}_{\Omega}^{(a,0)}(A;z,\zeta)= AK_{\Omega}^a(z,\zeta)$, where $K_{\Omega}^a(z,\zeta)=K_{\zeta}(z)$ and $K_{\zeta}$ is defined  by (\ref{reproducingkernel1}). Thus, we have the reproducing property
	\begin{equation}\label{reproducingpropertiebzero}
		W(z)=\iint_{\Omega}K_{\Omega}^a(z,\zeta)W(\zeta)dA_{\zeta}\qquad \forall W\in \mathcal{A}_{(a,0)}^2(\Omega;\mathbb{B}).
	\end{equation}
\end{proposition}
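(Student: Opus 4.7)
The plan is to exploit the fact that when $b=0$, the Vekua equation $\overline{\boldsymbol{\partial}}W=aW$ is $\mathbb{B}$-linear, so $\mathcal{A}^2_{(a,0)}(\Omega;\mathbb{B})$ is a $\mathbb{B}$-module: if $W$ is a weak solution, so is $AW$ for every $A\in\mathbb{B}$, by the commutativity of $\mathbb{B}$. In particular $\mathbf{j}K_z\in\mathcal{A}^2_{(a,0)}(\Omega;\mathbb{B})$. This is where the hypothesis $b=0$ is essential: for nonzero $b$ the term $b\overline{W}$ is not $\mathbb{B}$-linear in $W$, since $\overline{\mathbf{j}W}=-\mathbf{j}\overline{W}$.

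The central step is the identity $L_z=\mathbf{j}K_z$. To establish it, I would apply the first reproducing identity in \eqref{reproducingkernel1} with the test function $\mathbf{j}W$, which is admissible by the previous paragraph. Combining $\operatorname{Sc}(\mathbf{j}W)=-\operatorname{Vec}W$ with the $L_2$-version of \eqref{bicomplexinnerproductwithj}, namely $\langle\mathbf{j}W,V\rangle_{L_2(\Omega;\mathbb{B})}=-\langle W,\mathbf{j}V\rangle_{L_2(\Omega;\mathbb{B})}$, yields
\[
-\operatorname{Vec}W(z)=\operatorname{Sc}(\mathbf{j}W)(z)=\langle\mathbf{j}W,K_z\rangle_{L_2(\Omega;\mathbb{B})}=-\langle W,\mathbf{j}K_z\rangle_{L_2(\Omega;\mathbb{B})}.
\]
Comparing with the second identity in \eqref{reproducingkernel1} gives $\langle W,L_z-\mathbf{j}K_z\rangle_{L_2(\Omega;\mathbb{B})}=0$ for every $W\in\mathcal{A}^2_{(a,0)}(\Omega;\mathbb{B})$. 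Since $L_z-\mathbf{j}K_z$ itself belongs to this space, specializing $W:=L_z-\mathbf{j}K_z$ forces $L_z=\mathbf{j}K_z$.

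Setting $K^a_\Omega(z,\zeta):=K(z,\zeta)$, so that $L(z,\zeta)=\mathbf{j}K^a_\Omega(z,\zeta)$, I would substitute into \eqref{Bergmankernelvekua} to obtain
\[
\mathscr{K}^{(a,0)}_\Omega(A;z,\zeta)=\operatorname{Sc}(A)K^a_\Omega(z,\zeta)+\operatorname{Vec}(A)\mathbf{j}K^a_\Omega(z,\zeta)=\bigl(\operatorname{Sc}(A)+\mathbf{j}\operatorname{Vec}(A)\bigr)K^a_\Omega(z,\zeta)=AK^a_\Omega(z,\zeta),
\]
where I use that $\operatorname{Vec}(A)\in\mathbb{C}$ commutes with $\mathbf{j}$. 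The reproducing formula \eqref{reproducingpropertiebzero} then follows immediately from \eqref{reproducingproperty} by factoring $K^a_\Omega(z,\zeta)$ out of the integrand.

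The only bookkeeping I anticipate is keeping the signs straight when moving $\mathbf{j}$ across the $\mathbb{B}$-valued $L_2$-inner product, and recording explicitly that the admissibility of $\mathbf{j}W$ as a solution is exactly the step where the standing hypothesis $b=0$ is used; everything else is formal manipulation of the decomposition $A=\operatorname{Sc}(A)+\mathbf{j}\operatorname{Vec}(A)$.
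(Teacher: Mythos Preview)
Your proof is correct. Both your argument and the paper's rest on the same observation---that $b=0$ makes $\mathcal{A}^2_{(a,0)}(\Omega;\mathbb{B})$ a $\mathbb{B}$-module, so $\mathbf{j}$ times a solution is still a solution---and both use the identity \eqref{bicomplexinnerproductwithj} to move $\mathbf{j}$ across the inner product. The difference is in how the key identity $L_z=\mathbf{j}K_z$ is extracted: you obtain it in one step by testing the first reproducing relation in \eqref{reproducingkernel1} against $\mathbf{j}W$ and invoking the uniqueness of the Riesz representative, whereas the paper works componentwise, computing $\operatorname{Sc}L(z,\zeta)$ and $\operatorname{Vec}L(z,\zeta)$ separately through the symmetry relations \eqref{relationsbergmankernel} (applying the reproducing identities to $\mathbf{j}L_\zeta$ rather than to a general $\mathbf{j}W$). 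Your route is shorter and avoids the scalar/vector bookkeeping; the paper's route makes the connection to the previously recorded symmetries \eqref{relationsbergmankernel} explicit. Substantively they are the same argument, and nothing in your proposal needs repair.
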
	

\begin{proof}
	Let $K(z,\zeta)=K_{\zeta}(z)$ and $L(z,\zeta)=L_{\zeta}(z)$ as in (\ref{reproducingkernel1}). Using the equalities $\operatorname{Sc}(\mathbf{j}W)=-\operatorname{Vec}W$, $\operatorname{Vec}(\mathbf{j}W)=\operatorname{Sc}W$, and \eqref{bicomplexinnerproductwithj}, we obtain
	\begin{align*}
		\operatorname{Sc}L(z,\zeta)& = \operatorname{Sc}L_{\zeta}(z)= \operatorname{Vec}\left(\mathbf{j}L_{\zeta}(z)\right) = \langle \mathbf{j}L_{\zeta}, L_z\rangle_{L_2(\Omega; \mathbb{B})}=-\langle L_{\zeta},\mathbf{j} L_z\rangle_{L_2(\Omega; \mathbb{B})}\\
		& = -\left(\operatorname{Vec}(\mathbf{j}L_z(\zeta))\right)^*=
		-\left(\operatorname{Sc}L(\zeta,z)\right)^{*}.
	\end{align*}
	Similarly, $\operatorname{Sc}K(z,\zeta)=\left(\operatorname{Vec}L(\zeta,z)\right)^{*}$. Using these equalities together with (\ref{relationsbergmankernel}), we obtain
	\begin{align*}
		L(z,\zeta) & = \operatorname{Sc}L(z,\zeta)+\mathbf{j}\operatorname{Vec}L(z,\zeta)=-\left(\operatorname{Sc}L(\zeta,z)\right)^{*}+\mathbf{j}\left(\operatorname{Sc}K(\zeta,z)\right)^{*}\\
		&= -\operatorname{Vec}K(z,\zeta)+\mathbf{j}\operatorname{Sc}K(z,\zeta)= \mathbf{j}K(z,\zeta).
	\end{align*}
	Substituting this equality into (\ref{Bergmankernelvekua}) and (\ref{reproducingproperty}), we obtain that $K_{\Omega}^{(a,0)}(A;z,\zeta)=AK(z,\zeta)$ for every $A\in \mathbb{B}$ and the reproducing property (\ref{reproducingpropertiebzero}).
\end{proof}	

\begin{remark}
	For the $\mathbb{B}$-analytic Bergman space $\mathcal{A}^2(\Omega;\mathbb{B})$, the Vekua-Bergman kernel is denoted by $\mathscr{K}_{\Omega}(z,\zeta)$. Using the relation $W^{\pm}(z)=\left(\iint_{\Omega}\mathscr{K}_{\Omega}(z,\zeta)W(\zeta)dA_{\zeta}\right)^{\pm}=\iint_{\Omega}\mathscr{K}^{\pm}(z,\zeta)W^{\pm}(\zeta)dA_{\zeta}$, we conclude that $\mathscr{K}^{\pm}_{\Omega}(z,\zeta)$ are the anti-analytic and the analytic complex Bergman kernels \cite[Ch.I]{duren}.
\end{remark}
\begin{remark}
	When $\mathcal{A}_{(a,b)}^2(\Omega;\mathbb{B})$ possesses a kernel $K_{\Omega}^{(a,b)}(z,\zeta)$ satisfying (\ref{reproducingpropertiebzero}), it necessarily follows that $K(z,\zeta)=K_{\Omega}^{(a,b)}(z,\zeta)$ and $L(z,\zeta)=\mathbf{j}K_{\Omega}^{(a,b)}(z,\zeta)$. Consequently,  $\mathscr{K}_{\Omega}^{(a,b)}(A;z,\zeta)=AK_{\Omega}^{(a,b)}(z,\zeta)$. Thus, the Vekua-Bergman projections takes the form\\ $\mathbf{P}_{\Omega}^{(a,b)}\Psi(z)=\iint_{\Omega}K_{\Omega}^{(a,b)}(z,\zeta)\Psi(\zeta)dA_{\zeta}$ for all $\Psi\in L_2(\Omega;\mathbb{B})$. For every $W\in \mathcal{A}_{(a,b)}^2(\Omega;\mathbb{B})$, we have $\mathbf{j}W(z)=\mathbf{j}\iint_{\Omega}K_{\Omega}^{(a,b)}(z,\zeta)W(\zeta)dA_{\zeta}=\iint_{\Omega}K_{\Omega}^{(a,b)}(z,\zeta)\mathbf{j}W(\zeta)dA_{\zeta}=\mathbf{P}_{\Omega}^{(a,b)}[\mathbf{j}W](z)$, hence $\mathbf{j}W\in \mathcal{A}_{(a,b)}^2(\Omega;\mathbb{B})$ and the Vekua-Bergman space is a $\mathbb{B}$-module. Substituting $\mathbf{j}W$ into (\ref{bicomplexvekua}), we obtain the condition $b\overline{W}=0$ for all $W\in \mathcal{A}_{(a,b)}^2(\Omega;\mathbb{B})$. In particular, when $b$ is a constant belonging to $\mathcal{R}(\mathbb{B})$, such a kernel $K_{\Omega}^{(a,b)}(z,\zeta)$ cannot exists.
\end{remark}

\section{Hodge decomposition}
In this section, we assume that $\Gamma$ is a Liapunov curve.
\begin{proposition}\label{propclosedpartial}
	The adjoint of the operator $\overline{\boldsymbol{\partial}}$ is given by $(\overline{\boldsymbol{\partial}})^*= -\boldsymbol{\partial}: W_0^{1,2}(\Omega; \mathbb{B}) \rightarrow L_2(\Omega; \mathbb{B})$. 
\end{proposition}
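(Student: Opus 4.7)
The plan is to recognize this as an abstract adjoint identity between closed, densely defined operators on $L_2(\Omega;\mathbb{B})$. Set $T:=\overline{\boldsymbol{\partial}}$ with $\mathrm{dom}(T)=\overline{\mathfrak{D}}_2(\Omega;\mathbb{B})$ and $S:=-\boldsymbol{\partial}$ with $\mathrm{dom}(S)=W_0^{1,2}(\Omega;\mathbb{B})$. The goal is to prove $T^{*}=S$.

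First I would verify that both $T$ and $S$ are densely defined and closed. Density is immediate from $C_0^{\infty}(\Omega;\mathbb{B})\subset \overline{\mathfrak{D}}_2(\Omega;\mathbb{B})\cap W_0^{1,2}(\Omega;\mathbb{B})$. Closedness of $T$ is Proposition \ref{propositionoperatorisclosed} with $a\equiv b\equiv 0$. Closedness of $S$ is the delicate structural step: given $\{V_n\}\subset W_0^{1,2}$ with $V_n\to V$ and $-\boldsymbol{\partial}V_n\to Z$ in $L_2$, the identity $V_n=-\mathbf{T}_{\Omega}^{*}\boldsymbol{\partial}V_n$ from Proposition \ref{remarktheodorescuop}(vii) combined with the boundedness of $\mathbf{T}_{\Omega}^{*}$ from $L_2(\Omega;\mathbb{B})$ into $W^{1,2}(\Omega;\mathbb{B})$ (given by the representation \eqref{adjointtheodorescubicomplex} together with Remark \ref{proptheodorescucomplex}(ii)) upgrades $L_2$-convergence of the $\boldsymbol{\partial}$-derivatives to $W^{1,2}$-convergence of $V_n$; closedness of $W_0^{1,2}$ in $W^{1,2}$ then yields $V\in W_0^{1,2}$, and passing to the limit in the derivative gives $SV=Z$.

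Next I would establish the inclusion $S\subset T^{*}$ and the equality $S^{*}=T$ by direct integration by parts. For $V\in C_0^{\infty}(\Omega;\mathbb{B})$ and $W\in\overline{\mathfrak{D}}_2$, the defining condition of the weak $\overline{\boldsymbol{\partial}}$-derivative applied to the test function $V^{\dagger}$, combined with the involution identity $\overline{\boldsymbol{\partial}}V^{\dagger}=(\boldsymbol{\partial}V)^{\dagger}$ from Remark \ref{completenessweakderivative}(ii), yields
\[
\iint_{\Omega}W(\boldsymbol{\partial}V)^{\dagger}\,dA_z=-\iint_{\Omega}(\overline{\boldsymbol{\partial}}W)V^{\dagger}\,dA_z.
\]
Taking scalar parts gives $\langle TW,V\rangle_{L_2(\Omega;\mathbb{B})}=\langle W,-\boldsymbol{\partial}V\rangle_{L_2(\Omega;\mathbb{B})}$, and density of $C_0^{\infty}$ in $W_0^{1,2}$ extends the identity to every $V\in W_0^{1,2}$, proving $S\subset T^{*}$. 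The symmetric argument, swapping the roles of $V$ and $W$, produces $\langle SW,V\rangle=\langle W,\overline{\boldsymbol{\partial}}V\rangle$ for $W\in C_0^{\infty}$ and $V\in\overline{\mathfrak{D}}_2$; density in $W$ then yields $T\subset S^{*}$, while for any $V\in\mathrm{dom}(S^{*})$, testing the adjoint identity against $W=\phi\in C_0^{\infty}\subset W_0^{1,2}$ reduces it to the defining relation of a weak $\overline{\boldsymbol{\partial}}$-derivative, forcing $V\in\overline{\mathfrak{D}}_2$ with $\overline{\boldsymbol{\partial}}V=S^{*}V$. This gives $S^{*}\subset T$ and therefore $S^{*}=T$.

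Finally I would invoke the double-duality theorem for closed, densely defined operators on Hilbert space: since $S$ is closed and densely defined, $S=S^{**}=(S^{*})^{*}=T^{*}$, which is the claim. The main obstacle is precisely the closedness of $S$, because $W_0^{1,2}$ is not closed in $L_2$; it is Proposition \ref{remarktheodorescuop}(vii) that supplies the necessary rigidity through the identity $V=-\mathbf{T}_{\Omega}^{*}\boldsymbol{\partial}V$, which upgrades $L_2$-convergence of $\boldsymbol{\partial}V_n$ to $W^{1,2}$-convergence of $V_n$ and preserves the vanishing boundary trace under the limit.
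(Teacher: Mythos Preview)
Your proof is correct and takes a genuinely different route from the paper's. The paper argues the hard inclusion $\operatorname{dom}(\overline{\boldsymbol{\partial}})^*\subset W_0^{1,2}(\Omega;\mathbb{B})$ directly: it first shows any $V\in\operatorname{dom}(\overline{\boldsymbol{\partial}})^*$ lies in $W^{1,2}(\Omega;\mathbb{B})$ via $V=\mathbf{T}_{\Omega}^{*}(\overline{\boldsymbol{\partial}})^{*}V$, then uses the Green identities \eqref{bicomplexGreenidentities} to convert the adjoint relation into the vanishing of a line integral $\int_{\Gamma}\psi\operatorname{tr}_{\Gamma}V^{\dagger}\,d\widehat{\zeta}=0$, and finally invokes Lemma \ref{lemalineintegral} (which requires the Liapunov hypothesis on $\Gamma$) to force $\operatorname{tr}_{\Gamma}V=0$. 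Your argument bypasses the boundary analysis entirely: you prove the \emph{easier} identity $S^{*}=T$ by testing against $C_0^{\infty}$, and recover $T^{*}=S$ from the double-adjoint theorem once $S$ is shown to be closed, the latter following cleanly from $V=-\mathbf{T}_{\Omega}^{*}\boldsymbol{\partial}V$ and the $L_2\to W^{1,2}$ boundedness of $\mathbf{T}_{\Omega}^{*}$. The payoff of your approach is that it never touches Lemma \ref{lemalineintegral} or the Liapunov-curve assumption---Proposition \ref{remarktheodorescuop}(vii) on $W_0^{1,2}$ needs only the Borel--Pompeiu identity for compactly supported smooth functions---so your argument is both shorter and valid under weaker boundary regularity. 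The paper's approach, by contrast, is more explicit about \emph{why} the trace vanishes and sets up the boundary-integral machinery that it reuses later in the Hodge decomposition.
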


\begin{proof}
	From the definition of the weak $\overline{\boldsymbol{\partial}}$-derivative, it is clear that $W_0^{1,2}(\Omega; \mathbb{B})\subset \operatorname{dom}(\overline{\boldsymbol{\partial}})^*$ and $(\overline{\boldsymbol{\partial}})^*|_{W_0^{1,2}(\Omega; \mathbb{B})}= -\boldsymbol{\partial}$. Take $V\in \operatorname{dom}(\overline{\boldsymbol{\partial}})^*$ and $\Psi\in \overline{\mathfrak{D}}_2(\Omega; \mathbb{B})$. Using the fact that this space is a $\mathbb{B}$-module, the $\mathbb{B}$-linearity of $\boldsymbol{\partial}$ and $\overline{\boldsymbol{\partial}}$ (Remark \ref{completenessweakderivative}(ii)), and equalities \eqref{bicomplexinnerproduct} and \eqref{bicomplexinnerproductwithj}, we obtain 
	\begin{align*}
		\iint_{\Omega}\overline{\boldsymbol{\partial}}\Psi V^{\dagger} & = \langle \overline{\boldsymbol{\partial}} \Psi,V\rangle_{L_2(\Omega; \mathbb{B})}+\mathbf{j}\langle \overline{\boldsymbol{\partial}}\Psi,\mathbf{j}V\rangle\\
		& = \langle \Psi,(\overline{\boldsymbol{\partial}})^*V\rangle_{L_2(\Omega; \mathbb{B})}-\mathbf{j}\langle \mathbf{j}\overline{\boldsymbol{\partial}}\Psi, V\rangle_{L_2(\Omega; \mathbb{B})} \\
		& = \langle \Psi,(\overline{\boldsymbol{\partial}})^*V\rangle_{L_2(\Omega; \mathbb{B})}-\mathbf{j}\langle \mathbf{j}\Psi,\left(\overline{\boldsymbol{\partial}}\right)^*V \rangle_{L_2(\Omega; \mathbb{B})} = \iint_{\Omega} \Psi ((\overline{\boldsymbol{\partial}})^*V)^{\dagger}
	\end{align*}
	Applying the involution $^{\dagger}$ to both sides of the equality and using (\ref{involutuionofcauchyriemannoperator})  we obtain 
	\begin{equation}\label{auxiliar1}
		\iint_{\Omega}\boldsymbol{\partial}\Psi V=\iint_{\Omega} \Psi (\overline{\boldsymbol{\partial}})^*V\quad \mbox{ for all } \Psi \in \overline{\mathfrak{D}}_2(\Omega; \mathbb{B}).
	\end{equation}
	In particular, this is valid for $\Psi\in C_0^{\infty}(\Omega; \mathbb{B})$, hence $-\boldsymbol{\partial}V=\left(\boldsymbol{\partial}\right)^*V$. By Proposition \ref{remarktheodorescuop}(vii), $V=\mathbf{T}_{\Omega}^*\left(\boldsymbol{\partial}\right)^*V+G$, with $G\in L_2(\Omega; \mathbb{B})$ a $\mathbb{B}$-anti-analytic function. Since $\mathbf{T}_{\Omega}^*\left(\overline{\boldsymbol{\partial}}\right)^*\subset \left(\overline{\boldsymbol{\partial}}\mathbf{T}_{\Omega}\right)^*=\mathbf{I}$ (by Proposition \ref{remarktheodorescuop}(ii) and \cite[Prop.1.7]{schmudgen}), $V=V+G$, and $G=0$. Thus, $V=\mathbf{T}_{\Omega}^*\left(\boldsymbol{\partial}\right)^*V\in W^{1,2}(\Omega; \mathbb{B})$. Finally, given $\psi\in W^{1,\frac{1}{2}}(\Gamma; \mathbb{B})$, let $\Phi\in W^{1,2}(\Omega; \mathbb{B})$ such that $\operatorname{tr}_{\Gamma}\Phi =\psi$. By the Green identities \eqref{bicomplexGreenidentities}  and \eqref{auxiliar1}, we have
	\[
	\frac{-1}{2\pi \mathbf{j}}\int_{\Gamma}\psi(\zeta)^{\dagger}\operatorname{tr}_{\Gamma}V(\zeta)d\zeta^{\dagger}= \iint_{\Omega} \boldsymbol{\partial}\Phi^{\dagger} V +\iint_{\Omega}\Phi^{\dagger} \boldsymbol{\partial}V=0,
	\]
	which implies $\int_{\Gamma}\psi(\zeta)\operatorname{tr}_{\Gamma}V^{\dagger}(\zeta)d\zeta=0$ for all $\psi\in W^{1,\frac{1}{2}}(\Omega; \mathbb{B})$. Lemma \ref{lemalineintegral} implies that $V=W_0+G^{\dagger}$, with $W_0\in W^{1,2}_0(\Omega; \mathbb{B})$ and $G\in \mathcal{A}^{2}(\Omega; \mathbb{B})\cap W^{1,2}(\Omega; \mathbb{B})$. Hence, for every $\Phi\in W^{1,2}(\Omega; \mathbb{B})$, from (\ref{auxiliar1}), we derive 
	\[
	\iint_{\Omega}\boldsymbol{\partial}\Phi(W_0+G^{\dagger})= -\iint_{\Omega}\Phi\boldsymbol{\partial}W_0, \quad \mbox{ and thus,}\; \iint_{\Omega}G^{\dagger}\boldsymbol{\partial}\Phi=0.
	\]
	By choosing $\Phi=-\mathbf{T}^{*}_{\Omega}G$, we deduce that $\iint_{\Omega}G^{\dagger}G=0$, and consequently $G=0$. Therefore $V=W_0\in W_0^{1,2}(\Omega; \mathbb{B})$ and $(\boldsymbol{\partial})^*=-\boldsymbol{\partial}$.
\end{proof}
\begin{remark}\label{remarkclosedoperators}
	Let $\mathbf{A}:\operatorname{dom}(\mathbf{A})\subset \mathcal{H}\rightarrow \mathcal{H}$ be a densely defined operator in a Hilbert space $\mathcal{H}$. When $\mathcal{A}$ is closed, according to \cite[Prop. 16(ii) and Th. 1.8 (iii)]{schmudgen}, $\ker \mathbf{A}$ is a closed subspace, $\mathbf{A}^{**}=\mathbf{A}$ and $\ker(\mathbf{A})=(\operatorname{Im}\mathbf{A})^{\perp}$. Consequently, this yields the decomposition 
	\begin{equation}\label{decompositionclosedoperator}
		\mathcal{H}=\ker \mathbf{A}\oplus \overline{\operatorname{Im}(\mathbf{A}^*)}.
	\end{equation}
\end{remark}

\begin{corollary}[Bicomplex Hodge decomposition]\label{bicomplexhodgecoro}
	The following equality holds
	\begin{equation}\label{hodgedecompositioneq}
		L_2(\Omega; \mathbb{B})= \mathcal{A}^2(\Omega; \mathbb{B})\oplus \boldsymbol{\partial}W_0^{1,2}(\Omega; \mathbb{B}).
	\end{equation}
\end{corollary}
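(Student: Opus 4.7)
The plan is to apply the abstract decomposition theorem from Remark \ref{remarkclosedoperators} to the unbounded operator $\overline{\boldsymbol{\partial}}:\overline{\mathfrak{D}}_2(\Omega;\mathbb{B})\rightarrow L_2(\Omega;\mathbb{B})$. This operator is densely defined (it contains $C_0^{\infty}(\Omega;\mathbb{B})$) and closed (by Proposition \ref{propositionoperatorisclosed} with $a\equiv b\equiv 0$). Hence Remark \ref{remarkclosedoperators} gives
\[
L_2(\Omega;\mathbb{B})=\ker\overline{\boldsymbol{\partial}}\oplus \overline{\operatorname{Im}\bigl((\overline{\boldsymbol{\partial}})^*\bigr)}.
\]
By Weyl's lemma (Remark \ref{remarkWeyllema}) we have $\ker\overline{\boldsymbol{\partial}}=\mathcal{A}^2(\Omega;\mathbb{B})$, and by Proposition \ref{propclosedpartial} we have $(\overline{\boldsymbol{\partial}})^*=-\boldsymbol{\partial}$ on $W_0^{1,2}(\Omega;\mathbb{B})$, so $\operatorname{Im}\bigl((\overline{\boldsymbol{\partial}})^*\bigr)=\boldsymbol{\partial}W_0^{1,2}(\Omega;\mathbb{B})$.

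The remaining step, and the only genuinely nontrivial part, is to show that the closure bar can be removed, i.e., that $\boldsymbol{\partial}W_0^{1,2}(\Omega;\mathbb{B})$ is already closed in $L_2(\Omega;\mathbb{B})$. My approach is to appeal to the Banach closed range theorem: for a closed densely defined operator $\mathbf{A}$ between Hilbert spaces, $\operatorname{Im}(\mathbf{A}^*)$ is closed if and only if $\operatorname{Im}(\mathbf{A})$ is closed. It therefore suffices to prove that $\operatorname{Im}(\overline{\boldsymbol{\partial}})$ is closed in $L_2(\Omega;\mathbb{B})$.

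This in turn is immediate from the fact that $\overline{\boldsymbol{\partial}}$ is surjective: by Proposition \ref{remarktheodorescuop}(ii), $\mathbf{T}_{\Omega}:L_2(\Omega;\mathbb{B})\to \overline{\mathfrak{D}}_2(\Omega;\mathbb{B})$ is bounded and satisfies $\overline{\boldsymbol{\partial}}\mathbf{T}_{\Omega}W=W$ for every $W\in L_2(\Omega;\mathbb{B})$, so $\operatorname{Im}(\overline{\boldsymbol{\partial}})=L_2(\Omega;\mathbb{B})$, which is trivially closed. Combining this with the closed range theorem yields closedness of $\boldsymbol{\partial}W_0^{1,2}(\Omega;\mathbb{B})$ and hence the desired orthogonal decomposition
\[
L_2(\Omega;\mathbb{B})=\mathcal{A}^2(\Omega;\mathbb{B})\oplus \boldsymbol{\partial}W_0^{1,2}(\Omega;\mathbb{B}).
\]
The main obstacle in this whole argument is actually just organizational: making sure that the closed range theorem applies in the present setting of unbounded operators (rather than bounded ones), and that all the ingredients (closedness of $\overline{\boldsymbol{\partial}}$, dense domain, explicit right inverse $\mathbf{T}_{\Omega}$, identification of the adjoint via Proposition \ref{propclosedpartial}) have been assembled beforehand — which they have.
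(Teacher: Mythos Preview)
Your argument is correct and follows the same overall structure as the paper's proof: apply the abstract decomposition of Remark \ref{remarkclosedoperators} to the closed densely defined operator $\overline{\boldsymbol{\partial}}$, identify $\ker\overline{\boldsymbol{\partial}}=\mathcal{A}^2(\Omega;\mathbb{B})$ and $(\overline{\boldsymbol{\partial}})^*=-\boldsymbol{\partial}|_{W_0^{1,2}}$, and then remove the closure bar by proving closedness of the range.

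The one genuine difference is how closedness is obtained. You argue on the $\overline{\boldsymbol{\partial}}$ side: $\mathbf{T}_{\Omega}$ is a bounded right inverse, so $\overline{\boldsymbol{\partial}}$ is surjective, and then the closed range theorem for closed densely defined operators transfers closedness to $\operatorname{Im}\bigl((\overline{\boldsymbol{\partial}})^*\bigr)$. The paper instead works directly on the adjoint side: by Proposition \ref{remarktheodorescuop}(vii) one has $-\mathbf{T}_{\Omega}^{*}\boldsymbol{\partial}W=W$ for all $W\in W_0^{1,2}(\Omega;\mathbb{B})$, so $\mathbf{T}_{\Omega}^{*}$ is a bounded extension of the inverse of $-\boldsymbol{\partial}|_{W_0^{1,2}}$; together with injectivity (maximum principle) this makes $0$ a regular point, and \cite[Prop.~2.1(iv)]{schmudgen} gives the closed range directly. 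Your route trades the explicit regular-point check for an appeal to the closed range theorem; the paper's route is more self-contained within the tools already assembled (and avoids quoting the closed range theorem, which the paper does not state), but both are equally valid and essentially dual to each other.
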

\begin{proof}
	Since $\overline{\boldsymbol{\partial}}$ is closed, by Remark \ref{remarkclosedoperators}, we obtain the decomposition $L_2(\Omega; \mathbb{B})=\ker(\overline{\boldsymbol{\partial}})\oplus \overline{\operatorname{Im}(\overline{\boldsymbol{\partial}})^*}=\mathcal{A}^2(\Omega; \mathbb{B})\oplus \overline{\boldsymbol{\partial}W_0^{1,2}(\Omega; \mathbb{B})}$. It remains to show that $\boldsymbol{\partial}W_0^{1,2}(\Omega; \mathbb{B})$ is closed. By the Maximum principle, if $W\in W_0^{1,2}(\Omega; \mathbb{B})$ and $\overline{\boldsymbol{\partial}}W=0$, then $W\equiv 0$. Furthermore, according to Prop. \ref{remarktheodorescuop}(vii), the bounded operator $\mathbf{T}_{\Omega}^{*}$ is an extension of the inverse of $-\boldsymbol{\partial}$. Thus, $0$ is a regular point of $-\boldsymbol{\partial}$ and, consequently, its range is closed \cite[Prop. 2.1 (iv)]{schmudgen}. Therefore (\ref{hodgedecompositioneq}) follows.
\end{proof}

\begin{proposition}\label{propositionabjointvekua}
	The adjoint of $\overline{\boldsymbol{\partial}}-\mathbf{Q}_{(a,b)}$ is  $-\boldsymbol{\partial}-a^{\dagger}-b^*\mathbf{C}_{\mathbb{B}}: W_0^{1,2}(\Omega; \mathbb{B})\rightarrow L_2(\Omega; \mathbb{B})$, where $b^*:= (\operatorname{Sc}b)^*+\mathbf{j}(\operatorname{Vec}b)^*$. 
\end{proposition}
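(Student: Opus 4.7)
The plan is to treat the operator $\overline{\boldsymbol{\partial}}-\mathbf{Q}_{(a,b)}$ as a perturbation of the closed, densely defined operator $\overline{\boldsymbol{\partial}}$ by the bounded, everywhere defined operator $\mathbf{Q}_{(a,b)}\in \mathcal{B}(L_2(\Omega;\mathbb{B}))$. By the general Hilbert-space fact that $(\mathbf{A}+\mathbf{B})^{*}=\mathbf{A}^{*}+\mathbf{B}^{*}$ whenever $\mathbf{A}$ is closed and densely defined and $\mathbf{B}$ is bounded (with $\operatorname{dom}(\mathbf{A}+\mathbf{B})^{*}=\operatorname{dom}(\mathbf{A}^{*})$), I reduce to Proposition \ref{propclosedpartial}, which already identifies $(\overline{\boldsymbol{\partial}})^{*}=-\boldsymbol{\partial}$ on $W_0^{1,2}(\Omega;\mathbb{B})$, together with a direct computation of $\mathbf{Q}_{(a,b)}^{*}$. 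The task therefore collapses to showing that $\mathbf{Q}_{(a,b)}^{*}V=a^{\dagger}V+b^{*}\overline{V}$ for every $V\in L_2(\Omega;\mathbb{B})$.

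The adjoint of the multiplication operator $W\mapsto aW$ is obtained from the identity $\langle aW,V\rangle_{\mathbb{B}}=\operatorname{Sc}(aW V^{\dagger})=\operatorname{Sc}(W(aV^{\dagger}))$, combined with the commutativity of $\mathbb{B}$ and the multiplicativity of $\dagger$ (namely $(WV)^{\dagger}=W^{\dagger}V^{\dagger}$, immediate from the idempotent decomposition \eqref{involution1}). These give $aV^{\dagger}=(a^{\dagger}V)^{\dagger}$, hence $\langle aW,V\rangle_{\mathbb{B}}=\langle W,a^{\dagger}V\rangle_{\mathbb{B}}$, and integrating over $\Omega$ yields that the adjoint of $W\mapsto aW$ is $V\mapsto a^{\dagger}V$.

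For the antilinear-looking part $W\mapsto b\overline{W}$, I first handle the conjugation $\mathbf{C}_{\mathbb{B}}W=\overline{W}$ alone. Using $(\overline{W})^{+}=W^{-}$ and $(\overline{W})^{-}=W^{+}$ (verifiable from $W^{\pm}=\operatorname{Sc}W\mp i\operatorname{Vec}W$) together with the expression $\langle W,V\rangle_{\mathbb{B}}=\tfrac{1}{2}(W^{+}(V^{+})^{*}+W^{-}(V^{-})^{*})$, a short symbol swap shows $\langle\overline{W},V\rangle_{\mathbb{B}}=\langle W,\overline{V}\rangle_{\mathbb{B}}$, so $\mathbf{C}_{\mathbb{B}}$ is self-adjoint. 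Chaining with the previous calculation:
\begin{equation*}
\langle b\overline{W},V\rangle_{\mathbb{B}}=\langle \overline{W},b^{\dagger}V\rangle_{\mathbb{B}}=\langle W,\overline{b^{\dagger}V}\rangle_{\mathbb{B}}=\langle W,\overline{b^{\dagger}}\,\overline{V}\rangle_{\mathbb{B}},
\end{equation*}
and a direct check on $b=\operatorname{Sc}b+\mathbf{j}\operatorname{Vec}b$ gives $\overline{b^{\dagger}}=(\operatorname{Sc}b)^{*}+\mathbf{j}(\operatorname{Vec}b)^{*}=b^{*}$. Integrating yields $(W\mapsto b\overline{W})^{*}V=b^{*}\overline{V}=b^{*}\mathbf{C}_{\mathbb{B}}V$.

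Combining these two identities, $\mathbf{Q}_{(a,b)}^{*}V=a^{\dagger}V+b^{*}\mathbf{C}_{\mathbb{B}}V$, and inserting into $(\overline{\boldsymbol{\partial}}-\mathbf{Q}_{(a,b)})^{*}=(\overline{\boldsymbol{\partial}})^{*}-\mathbf{Q}_{(a,b)}^{*}$ produces the claimed formula with domain $W_0^{1,2}(\Omega;\mathbb{B})$. The only subtle step I anticipate is keeping track of the commutativity/multiplicativity of $\dagger$ and the interchange of $\overline{\cdot}$ with the idempotent components $\mathbf{p}^{\pm}$; everything else is bookkeeping plus Proposition \ref{propclosedpartial}.
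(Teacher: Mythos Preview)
Your proposal is correct and follows essentially the same approach as the paper: both use the general fact $(\overline{\boldsymbol{\partial}}-\mathbf{Q}_{(a,b)})^{*}=(\overline{\boldsymbol{\partial}})^{*}-\mathbf{Q}_{(a,b)}^{*}$ (valid since $\mathbf{Q}_{(a,b)}$ is bounded), invoke Proposition~\ref{propclosedpartial} for $(\overline{\boldsymbol{\partial}})^{*}$, and then compute $(a\mathbf{I})^{*}=a^{\dagger}\mathbf{I}$ and $(b\mathbf{C}_{\mathbb{B}})^{*}=b^{*}\mathbf{C}_{\mathbb{B}}$. The paper simply states the latter as ``a direct computation,'' whereas you spell out the details via the multiplicativity of $\dagger$ and the self-adjointness of $\mathbf{C}_{\mathbb{B}}$; the arguments are otherwise identical.
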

\begin{proof}
	Since $\overline{\boldsymbol{\partial}}-\mathbf{Q}_{(a,b)}$ is the difference between $\overline{\boldsymbol{\partial}}$ and the bounded operator $\mathbf{Q}_{(a,b)}$, its adjoint is given by  $\left(\overline{\boldsymbol{\partial}}-\mathbf{Q}_{(a,b)}\right)^*= (\overline{\boldsymbol{\partial}})^*-(a\mathbf{I})^*-(b\mathbf{C}_{\mathbb{B}})^*$ with domain $\operatorname{dom}(\overline{\boldsymbol{\partial}})^*=W_0^{1,2}(\Omega; \mathbb{B})$ (see \cite[Prop. 1.6]{schmudgen}). A direct computation shows that $(a\mathbf{I})^*=a^{\dagger}\mathbf{I}$ and $(b\mathbf{C}_{\mathbb{B}})^*=b^*\mathbf{C}_{\mathbb{B}}$, with $b^*=(\operatorname{Sc}b)^*+\mathbf{j}(\operatorname{Vec}b)^*$. 
\end{proof}

\begin{proposition}\label{remarkorthogonaldecompositionvekua}
	The following orthogonal decomposition holds:
	\begin{equation}\label{orthogonaldecompositionvekua}
		L_2(\Omega;\mathbb{B})=\mathcal{A}_{(a,b)}^2(\Omega;\mathbb{B})\oplus\overline{\left(\boldsymbol{\partial}+a^{\dagger}+b^*\mathbf{C}_{\mathbb{B}}\right)W_0^{1,2}(\Omega;\mathbb{B})}^{L_2(\Omega;\mathbb{B})}.
	\end{equation}
\end{proposition}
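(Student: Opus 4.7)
The plan is to derive this orthogonal decomposition as an immediate consequence of the abstract closed-range decomposition recorded in Remark \ref{remarkclosedoperators}, applied to the unbounded Vekua operator $\mathbf{A}:=\overline{\boldsymbol{\partial}}-\mathbf{Q}_{(a,b)}$ on the Hilbert space $L_2(\Omega;\mathbb{B})$ with domain $\operatorname{dom}(\mathbf{A})=\overline{\mathfrak{D}}_2(\Omega;\mathbb{B})$. All of the real work has already been carried out in Propositions \ref{propositionoperatorisclosed} and \ref{propositionabjointvekua}; what remains is to assemble the pieces.

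First I would verify the two hypotheses needed to apply Remark \ref{remarkclosedoperators} to $\mathbf{A}$. The domain is dense in $L_2(\Omega;\mathbb{B})$ because $C_0^{\infty}(\Omega;\mathbb{B})\subset \overline{\mathfrak{D}}_2(\Omega;\mathbb{B})$ and $C_0^{\infty}(\Omega;\mathbb{B})$ is dense in $L_2(\Omega;\mathbb{B})$. That $\mathbf{A}$ is closed is exactly the content of Proposition \ref{propositionoperatorisclosed}. Consequently, Remark \ref{remarkclosedoperators} gives the abstract decomposition
\begin{equation*}
L_2(\Omega;\mathbb{B})=\ker(\mathbf{A})\oplus \overline{\operatorname{Im}(\mathbf{A}^*)}^{L_2(\Omega;\mathbb{B})}.
\end{equation*}

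Next I would identify the two summands in terms of the objects appearing in the statement. By the definition of the Vekua-Bergman space, $\ker(\mathbf{A})=\mathcal{A}_{(a,b)}^2(\Omega;\mathbb{B})$. By Proposition \ref{propositionabjointvekua}, $\mathbf{A}^*=-\boldsymbol{\partial}-a^{\dagger}-b^*\mathbf{C}_{\mathbb{B}}$ with $\operatorname{dom}(\mathbf{A}^*)=W_0^{1,2}(\Omega;\mathbb{B})$. Therefore
\begin{equation*}
\operatorname{Im}(\mathbf{A}^*)=-\bigl(\boldsymbol{\partial}+a^{\dagger}+b^*\mathbf{C}_{\mathbb{B}}\bigr)W_0^{1,2}(\Omega;\mathbb{B}).
\end{equation*}
Since $W_0^{1,2}(\Omega;\mathbb{B})$ is invariant under $u\mapsto -u$, the set on the right coincides with $(\boldsymbol{\partial}+a^{\dagger}+b^*\mathbf{C}_{\mathbb{B}})W_0^{1,2}(\Omega;\mathbb{B})$; note the additivity of the conjugation $\mathbf{C}_{\mathbb{B}}$ guarantees this identity even though $\mathbf{C}_{\mathbb{B}}$ is not $\mathbb{B}$-linear. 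Passing to the closure and substituting into the abstract decomposition yields \eqref{orthogonaldecompositionvekua}.

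There is no real obstacle in this argument: the statement is essentially a repackaging of the content of the previous two propositions together with the abstract functional-analytic fact that $\ker(\mathbf{A})=\operatorname{Im}(\mathbf{A}^*)^{\perp}$ for a closed densely defined operator. The only point requiring a brief remark is the sign issue just mentioned, which is purely cosmetic because the image of a linear (more precisely, additive) map over a symmetric domain is invariant under negation.
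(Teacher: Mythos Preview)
Your proof is correct and follows essentially the same route as the paper: apply the abstract decomposition of Remark \ref{remarkclosedoperators} to the closed densely defined operator $\overline{\boldsymbol{\partial}}-\mathbf{Q}_{(a,b)}$, identifying the kernel as $\mathcal{A}_{(a,b)}^2(\Omega;\mathbb{B})$ and the adjoint via Proposition \ref{propositionabjointvekua}. Your explicit mention of Proposition \ref{propositionoperatorisclosed} for closedness and of the density of the domain makes the verification of the hypotheses slightly more transparent than the paper's one-line proof.
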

\begin{proof}
	It follows from Propositions \ref{propclosedpartial} and  \ref{propositionabjointvekua}, and Remark \ref{remarkclosedoperators}.
\end{proof}
\begin{remark}\label{remarkinvertibilityS}
	By \eqref{normoperatorQ},  $\|\mathbf{T}_{\Omega}\mathbf{Q}_{(a,b)}\|_{\mathcal{B}(L_2(\Omega;\mathbb{B}))}\leqslant 2\sqrt{2}\operatorname{diam}(\Omega)\max\{\|a\|_{L_{\infty}(\Omega;\mathbb{B})},\|b\|_{L_{\infty}(\Omega;\mathbb{B})}\}$. In particular, when $\max\{\|a\|_{L_{\infty}(\Omega;\mathbb{B})},\|b\|_{L_{\infty}(\Omega;\mathbb{B})}\}<\frac{1}{2\sqrt{2}\operatorname{diam}(\Omega)}$, $\mathbf{S}_{\Omega}^{(a,b)}\in \mathcal{G}\left(L_2(\Omega;\mathbb{B})\right)$ \cite[Th.  10.23]{axleranalysis}. By the same reasoning the operator $\tilde{\mathcal{S}}_{\Omega}^{(a,b)}:=\mathbf{I}- \mathbf{T}^{*}\mathbf{Q}_{(a,b)}^*$ is invertible.
\end{remark}
\begin{remark}\label{remarkdirichletproblem}
	Let $D_{(a,b)}$ and $D_{(a,b)}^*$ denote the solution spaces of the Dirichlet problems $(\overline{\boldsymbol{\partial}}-\mathbf{Q}_{(a,b)})W=0$ and $\left(\boldsymbol{\partial}+\mathbf{Q}_{(a,b)}^*\right)W=0$, respectively, where $W\in W^{1,2}_0(\Omega;\mathbb{B})$.
	Thus, $D_{(a,b)}\subset \ker\mathbf{S}_{\Omega}^{(a,b)}$ and $D_{(a,b)}^*\subset \ker\tilde{\mathcal{S}}_{\Omega}^{(a,b)}$. In particular, both spaces have finite dimensions.
	
	We establish the inclusion $D_{(a,b)}\subset \ker\mathbf{S}_{\Omega}^{(a,b)}$ (the proof of $D_{(a,b)}^*\subset \ker\tilde{\mathcal{S}}_{\Omega}^{(a,b)}$ is analogous). Let $W\in D_{(a,b)}$. By the Borel-Pompeiu formula we have $W=\mathbf{T}\overline{\boldsymbol{\partial}}W=\mathbf{T}\mathbf{Q}_{(a,b)}W$, hence $\mathbf{S}_{\Omega}^{(a,b)}W=0$, and consequently, $W\in \ker\mathbf{S}_{\Omega}^{(a,b)}$.
\end{remark}
\begin{theorem}[Hodge decomposition]
	If $\max\{\|a\|_{L_{\infty}(\Omega;\mathbb{B})},\|b\|_{L_{\infty}(\Omega;\mathbb{B})}\}<\frac{1}{2\sqrt{2}\operatorname{diam}(\Omega)}$, then the following decomposition holds
	\begin{equation}\label{hodgedecompositionVekua}
		L_2(\Omega;\mathbb{B})=\mathcal{A}_{(a,b)}^2(\Omega;\mathbb{B})\oplus \left(\boldsymbol{\partial}+a^{\dagger}+b^*\mathbf{C}_{\mathbb{B}}\right)W_0^{1,2}(\Omega;\mathbb{B}).
	\end{equation}
\end{theorem}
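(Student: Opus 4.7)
The orthogonal decomposition from Proposition \ref{remarkorthogonaldecompositionvekua} already provides
\[
L_2(\Omega;\mathbb{B})=\mathcal{A}_{(a,b)}^2(\Omega;\mathbb{B})\oplus\overline{\left(\boldsymbol{\partial}+a^{\dagger}+b^*\mathbf{C}_{\mathbb{B}}\right)W_0^{1,2}(\Omega;\mathbb{B})}^{L_2(\Omega;\mathbb{B})},
\]
so the entire content of the theorem is that under the smallness hypothesis on $a$ and $b$, the subspace $R:=\left(\boldsymbol{\partial}+a^{\dagger}+b^*\mathbf{C}_{\mathbb{B}}\right)W_0^{1,2}(\Omega;\mathbb{B})$ is already closed in $L_2(\Omega;\mathbb{B})$. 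My plan is to prove this closedness by exploiting the invertibility of $\tilde{\mathcal{S}}_{\Omega}^{(a,b)}=\mathbf{I}-\mathbf{T}_{\Omega}^{*}\mathbf{Q}_{(a,b)}^{*}$ provided by Remark \ref{remarkinvertibilityS}, combined with the identity $-\mathbf{T}_{\Omega}^{*}\boldsymbol{\partial}u=u$ valid for $u\in W_0^{1,2}(\Omega;\mathbb{B})$ (Proposition \ref{remarktheodorescuop}(vii)).

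Concretely, let $\{f_n\}\subset R$ with $f_n\to f$ in $L_2(\Omega;\mathbb{B})$, and write $f_n=\boldsymbol{\partial}u_n+\mathbf{Q}_{(a,b)}^{*}u_n$ with $u_n\in W_0^{1,2}(\Omega;\mathbb{B})$. Applying $\mathbf{T}_{\Omega}^{*}$ to both sides and using $-\mathbf{T}_{\Omega}^{*}\boldsymbol{\partial}u_n=u_n$, we get
\[
-\mathbf{T}_{\Omega}^{*}f_n = u_n-\mathbf{T}_{\Omega}^{*}\mathbf{Q}_{(a,b)}^{*}u_n =\tilde{\mathcal{S}}_{\Omega}^{(a,b)}u_n.
\]
Since $\mathbf{T}_{\Omega}^{*}\in\mathcal{B}(L_2(\Omega;\mathbb{B}))$, the left-hand side converges in $L_2$, and since $\tilde{\mathcal{S}}_{\Omega}^{(a,b)}\in\mathcal{G}(L_2(\Omega;\mathbb{B}))$ by Remark \ref{remarkinvertibilityS}, the sequence $\{u_n\}$ converges in $L_2$ to some $u\in L_2(\Omega;\mathbb{B})$.

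The next step is to upgrade the convergence to $W^{1,2}$. Rearranging the displayed identity gives $u_n=-\mathbf{T}_{\Omega}^{*}f_n+\mathbf{T}_{\Omega}^{*}\mathbf{Q}_{(a,b)}^{*}u_n$. By Proposition \ref{remarktheodorescuop}(i) and (iii), $\mathbf{T}_{\Omega}^{*}$ is bounded from $L_2(\Omega;\mathbb{B})$ into $W^{1,2}(\Omega;\mathbb{B})$ (it is built from $A_\Omega$ and $B_\Omega$), so both terms on the right converge in $W^{1,2}(\Omega;\mathbb{B})$. Hence $u_n\to u$ in $W^{1,2}(\Omega;\mathbb{B})$, and since $W_0^{1,2}(\Omega;\mathbb{B})$ is closed in $W^{1,2}(\Omega;\mathbb{B})$, we conclude $u\in W_0^{1,2}(\Omega;\mathbb{B})$. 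The $L_2$-continuity of $\boldsymbol{\partial}$ and $\mathbf{Q}_{(a,b)}^{*}$ on this convergent sequence then yields $f=\boldsymbol{\partial}u+\mathbf{Q}_{(a,b)}^{*}u\in R$, so $R$ is closed and the decomposition follows from Proposition \ref{remarkorthogonaldecompositionvekua}.

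The main obstacle I anticipate is simply recognizing the right composition: one must apply $\mathbf{T}_{\Omega}^{*}$ so that the $\boldsymbol{\partial}$-term on $W_0^{1,2}(\Omega;\mathbb{B})$ collapses to the identity and reveals $\tilde{\mathcal{S}}_{\Omega}^{(a,b)}$; after that, the invertibility from the smallness hypothesis does the heavy lifting, and upgrading $L_2$-convergence of $u_n$ to $W^{1,2}$-convergence is automatic because $\mathbf{T}_{\Omega}^{*}$ gains one derivative.
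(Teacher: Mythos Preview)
Your proof is correct and follows essentially the same route as the paper: both arguments hinge on the identity $-\mathbf{T}_{\Omega}^{*}\bigl(\boldsymbol{\partial}+\mathbf{Q}_{(a,b)}^{*}\bigr)u=\tilde{\mathcal{S}}_{\Omega}^{(a,b)}u$ for $u\in W_0^{1,2}(\Omega;\mathbb{B})$ together with the invertibility of $\tilde{\mathcal{S}}_{\Omega}^{(a,b)}$ from Remark~\ref{remarkinvertibilityS}. The only cosmetic difference is that the paper packages this as the existence of a bounded left inverse $\mathbf{R}=(\tilde{\mathcal{S}}_{\Omega}^{(a,b)})^{-1}\mathbf{T}_{\Omega}^{*}$ and invokes the regular-point criterion \cite[Prop.~2.1(iv)]{schmudgen} for closed range, whereas you unpack the same idea into a direct sequential argument (and add the harmless extra step of upgrading $L_2$-convergence of $u_n$ to $W^{1,2}$-convergence via the smoothing of $\mathbf{T}_{\Omega}^{*}$).
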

\begin{proof}
	According to Remark \ref{remarkinvertibilityS}, $\tilde{\mathcal{S}}_{\Omega}^{(a,b)}\in \mathcal{G}(L_2(\Omega;\mathbb{B}))$. As stated in Remark \ref{remarkdirichletproblem}, $0$ is not an eigenvalue of the operator $\boldsymbol{\partial}+\mathbf{Q}_{(a,b)}^*:W_0^{1,2}(\Omega;\mathbb{B})\rightarrow L_2(\Omega;\mathbb{B})$. Moreover, the inverse of $-(\boldsymbol{\partial}+\mathbf{Q}_{(a,b)}^*)$ is $\mathbf{R}=\left(\tilde{S}_{\Omega}^{(a,b)}\right)^{-1}\mathbf{T}_{\Omega}^*$. Indeed, since $\tilde{S}_{\Omega}^{(a,b)}(\mathfrak{D}_2(\Omega;\mathbb{B}))\subset \mathfrak{D}_2(\Omega;\mathbb{B})$ and $-\boldsymbol{\partial}\tilde{S}_{\Omega}^{(a,b)}W=-(\boldsymbol{\partial}+\mathbf{Q}_{(a,b)}^*)W$ for all $W\in \mathfrak{D}_2(\Omega;\mathbb{B})$, we have that $\boldsymbol{\partial}V=-(\boldsymbol{\partial}+\mathbf{Q}_{(a,b)}^*)\left(\tilde{\mathcal{S}}_{\Omega}^{(a,b)}\right)^{-1}V$ for all $V\in \mathfrak{D}_2(\Omega;\mathbb{B})$. Thus, when $W\in W_0^{1,2}(\Omega;\mathbb{B})$, using Proposition \ref{remarktheodorescuop}(vii), we obtain
	\[
	-\mathbf{R}\left(\boldsymbol{\partial}+\mathbf{Q}_{(a,b)}^*\right)W=\left(\tilde{\mathcal{S}}_{\Omega}^{(a,b)}\right)^{-1}\left(-\mathbf{T}_{\Omega}^*\boldsymbol{\partial}W-\mathbf{T}_{\Omega}^*\mathbf{Q}_{(a,b)}^*W\right)=\left(\tilde{\mathcal{S}}_{\Omega}^{(a,b)}\right)^{-1}\tilde{\mathcal{S}}_{\Omega}^{(a,b)}W=W
	\] 
	 Again, by Proposition \ref{remarktheodorescuop}(vii), given $V\in L_2(\Omega;\mathbb{B})$, we have
	\[
	-(\boldsymbol{\partial}+\mathbf{Q}_{(a,b)}^*)\mathbf{R}V=-(\boldsymbol{\partial}+\mathbf{Q}_{(a,b)}^*)\left(\tilde{\mathcal{S}}_{\Omega}^{(a,b)}\right)^{-1}\mathbf{T}_{\Omega}^*V=-\boldsymbol{\partial}\mathbf{T}^*V=V.
	\]	
	Therefore $-(\boldsymbol{\partial}+\mathbf{Q}_{(a,b)}^*)$ has a bounded inverse and $0$ is a regular value, implying  that its range is closed \cite[Prop. 2.1 (iv)]{schmudgen}, yielding (\ref{hodgedecompositionVekua}).
\end{proof}
\begin{remark}\label{remarkinversecaseb0}
	In \cite{CamposBicomplex}, the bicomplex exponential of $W\in \mathbb{B}$ is defined as $e^W:=\mathbf{p}^+e^{W^+}+\mathbf{p}^{-}e^{W^{-}}$. For every $W,V\in \mathbb{B}$, $e^{V+W}=e^{W}e^{V}$, and, in particular, $e^W\in \mathcal{R}(\mathbb{B})$ with $\left(e^{W}\right)^{-1}=e^{-W}$ (for the proof of these facts, see \cite[Prop. 4]{CamposBicomplex}). Note that $e^{\widehat{z}}$ is $\mathbb{B}$-analytic. A direct computation shows that for every $V\in W^{1,2}(\Omega;\mathbb{B})$, $\overline{\boldsymbol{\partial}}e^{V}=\overline{\boldsymbol{\partial}}V\cdot e^V$. Thus, $\Phi_a:= e^{\mathbf{T}_{\Omega}a}$ is a particular solution of $\left(\overline{\boldsymbol{\partial}}-a\right)W=0$.  Since $a\in L_{\infty}(\Omega;\mathbb{B})$, $\Phi_a\in C^{0,\epsilon}(\overline{\Omega}; \mathbb{B})\cap W^{1,2}(\Omega;\mathbb{B})$ for all $0<\epsilon<1$. A right-inverse operator for $\overline{\boldsymbol{\partial}}-a$ is given by $\mathbf{R}_{a,\Omega}V:=\Phi_a\mathbf{T}_{\Omega}\Phi_{-a}V$. Furthermore, every $W\in \mathcal{A}_{(a,0)}^2(\Omega;\mathbb{B})$ can be written as $W=\Psi \Phi_{a}$, with $\Psi=W\cdot \Phi_{-a}\in \mathcal{A}^2(\Omega;\mathbb{B})$. This is a version of the similarity principle for bicomplex pseudoanalytic functions \cite[Th. 14]{CamposBicomplex}. Actually, the operator $\mathbf{R}_a:= \Phi_a\mathbf{T}_{\Omega}\Phi_{-a}$  is not only a bounded right-inverse for $\left(\overline{\boldsymbol{\partial}}-a\right):W_0^{1,2}(\Omega;\mathbb{B})\rightarrow L_2(\Omega;\mathbb{B})$ but also a left-inverse:
	\begin{align*}
		\mathbf{R}_a\left(\overline{\boldsymbol{\partial}}-a\right)W & =\Phi_a\mathbf{T}_{\Omega}\Phi_{-a}\left(\overline{\boldsymbol{\partial}}W-aW\right)=\Phi_a\mathbf{T}_{\Omega}\left(\Phi_{-a}\overline{\boldsymbol{\partial}}W-a\Phi_{-a}W\right)\\
		&= \Phi_a\mathbf{T}_{\Omega}\overline{\boldsymbol{\partial}}(\Phi_{-a}W)= \Phi_a\Phi_{-a}W=W.
	\end{align*}
Hence $0$ is a regular value for $(\overline{\boldsymbol{\partial}}-a)$. Similarly, the operator $\tilde{\mathbf{R}}_a:= \Psi_{-a^{\dagger}}\mathbf{T}_{\Omega}^*\Psi_{a^{\dagger}}$, where $\Psi_a:=e^{-\mathbf{T}^*_{\Omega}a}$,  is the bounded inverse of $-\left(\boldsymbol{\partial}+a^{\dagger}\right):W_0^{1,2}(\Omega;\mathbb{B})\rightarrow L_2(\Omega;\mathbb{B})$. Thus, $0$ is a regular value, and $\left(\boldsymbol{\partial}+a^{\dagger}\right)W_0^{1,2}(\Omega;\mathbb{B})$ is closed. Consequently, the Hodge decomposition \eqref{hodgedecompositionVekua} is still valid for all $a\in L_{\infty}(\Omega;\mathbb{B})$ when $b\equiv0$.
\end{remark}

\section{Properties of the main Vekua equation and its relations with the conductivity equation}\label{sectionmainvekua}

Let $1<p<\infty$. In this section, we focus on the main Vekua Equation, corresponding to the coefficients $a\equiv 0$ and $b=\frac{\overline{\boldsymbol{\partial}}f}{f}$, where $f\in W^{1,\infty}(\Omega)$ is a non-vanishing complex scalar function. The main Vekua equation has the form
\begin{equation}\label{mainvekuaeq}
	\overline{\boldsymbol{\partial}}W-\dfrac{\overline{\boldsymbol{\partial}}f}{f} \overline{W}=0\quad \mbox{in }\Omega.
\end{equation}
 Let $\mathcal{A}_f^p(\Omega;\mathbb{B})$ denote the associated Vekua-Bergman space. To ensure $\dfrac{\overline{\boldsymbol{\partial}}f}{f}\in L_{\infty}(\Omega;\mathbb{B})$, we assume $\frac{1}{f}\in L_{\infty}(\Omega)$. In certain contexts, an $f$ satisfying these conditions is called a {\it proper conductivity}.

Let $W\in \mathcal{A}_f^p(\Omega;\mathbb{B})$. Since $W\in W^{1,p}_{loc}(\Omega;\mathbb{B})$ (by Proposition \ref{propositiondiferenciability}), a direct computation shows that $u=\operatorname{Sc}W$ and $v=\operatorname{Vec}W
$ satisfy (a.e. in $\Omega$ ) the system
\begin{align}
	u_x-v_y & =f_xu+f_yv, \label{system11}\\
    u_y+v_x & = f_yu-f_xv, \label{system12}	
\end{align}
or equivalently,
\begin{align}
f\frac{\partial}{\partial x}\left(\frac{u}{f}\right) & =	\frac{1}{f}\frac{\partial}{\partial y}(fv), \label{system21}\\
f\frac{\partial}{\partial y}\left(\frac{u}{f}\right) & =	-\frac{1}{f}\frac{\partial}{\partial x}(fv), \label{system22}	
\end{align}

Analogous to classical solutions where $f\in C^2(\Omega)$ (see \cite{hugo} or \cite[Ch. 3]{kravpseudo1}), the scalar and vector parts of $W$ satisfy a pair of conductivity equations.

\begin{proposition}\label{propscalarandvectorialconductivity}
 For all $W\in \mathcal{A}_f^p(\Omega;\mathbb{B})$, the function $U=\frac{u}{f}$, where $u= \operatorname{Sc}W$, is a weak solution in $W^{1,p}_{loc}(\Omega)$ of the conductivity equation
	\begin{equation}\label{conductivityequation1}
		\operatorname{div}\left(f^2\nabla U\right)=0\quad \mbox{ in } \Omega.
	\end{equation} 
Similarly, $V=fv$, where $v=\operatorname{Vec}W$ is a a weak solution in $W^{1,p}_{loc}(\Omega)$ of
\begin{equation}\label{conductivityequation2}
	\operatorname{div}\left(\frac{1}{f^2}\nabla V\right)=0\quad \mbox{ in } \Omega.
\end{equation} 
\end{proposition}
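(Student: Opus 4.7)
The plan is to derive the two conductivity equations directly from the system (\ref{system21})--(\ref{system22}), after first upgrading the regularity of the auxiliary functions $U := u/f$ and $V := fv$. By Proposition \ref{propositiondiferenciability}(ii), $W \in W^{1,p}_{loc}(\Omega;\mathbb{B})$, hence $u,v \in W^{1,p}_{loc}(\Omega)$. Since $f \in W^{1,\infty}(\Omega)$ and $f^{-1} \in L_\infty(\Omega)$, the chain-rule identity $\nabla(1/f) = -\nabla f / f^2$ (valid a.e.\ under these hypotheses) yields $1/f \in W^{1,\infty}(\Omega)$, and the Leibniz rule then gives $U,V \in W^{1,p}_{loc}(\Omega)$.

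Next, I would multiply each equation of (\ref{system21})--(\ref{system22}) by $f$ to get the equivalent a.e.\ identities
\begin{equation*}
f^2\,\partial_x U = \partial_y V, \qquad f^2\,\partial_y U = -\partial_x V,
\end{equation*}
so that $f^2\nabla U$ coincides a.e.\ with the rotated gradient $(\partial_y V,\,-\partial_x V)$. For any test function $\varphi \in C_0^\infty(\Omega)$, this gives
\begin{equation*}
\iint_\Omega f^2\,\nabla U \cdot \nabla \varphi \, dA_z
= \iint_\Omega \bigl(\partial_y V\,\partial_x\varphi - \partial_x V\,\partial_y\varphi\bigr)\,dA_z.
\end{equation*}
Because $V \in W^{1,p}_{loc}(\Omega)$ and both $\varphi_x,\varphi_y \in C_0^\infty(\Omega)$, integrating by parts in the weak sense (no boundary terms appear) produces
\begin{equation*}
\iint_\Omega f^2\,\nabla U \cdot \nabla \varphi \, dA_z
= -\iint_\Omega V\bigl(\partial_y\partial_x\varphi - \partial_x\partial_y\varphi\bigr)\,dA_z = 0,
\end{equation*}
where the final equality uses $\varphi_{xy}=\varphi_{yx}$ for smooth $\varphi$. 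This is precisely the weak formulation of (\ref{conductivityequation1}).

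The argument for (\ref{conductivityequation2}) is entirely symmetric: rearranging the same identities gives $f^{-2}\partial_x V = -\partial_y U$ and $f^{-2}\partial_y V = \partial_x U$, whence
\begin{equation*}
\iint_\Omega f^{-2}\nabla V \cdot \nabla \varphi \, dA_z
= \iint_\Omega \bigl(-\partial_y U\,\partial_x\varphi + \partial_x U\,\partial_y\varphi\bigr)\,dA_z,
\end{equation*}
which vanishes for all $\varphi\in C_0^\infty(\Omega)$ by the identical integration-by-parts / commuting-mixed-partials computation. The only substantive step is the preliminary Sobolev regularity $U,V \in W^{1,p}_{loc}(\Omega)$, which is where the hypotheses $f\in W^{1,\infty}(\Omega)$ and $f^{-1}\in L_\infty(\Omega)$ are used; once this is in hand, the proof reduces to the standard observation that a rotated $W^{1,p}_{loc}$-gradient is weakly divergence-free because the commutator $\varphi_{xy}-\varphi_{yx}$ annihilates smooth test functions.
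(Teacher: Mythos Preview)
Your proof is correct and follows essentially the same approach as the paper: both arguments use the system (\ref{system21})--(\ref{system22}) to identify $f^2\nabla U$ with the rotated gradient $(\partial_y V,\,-\partial_x V)$, then integrate by parts against a smooth test function and invoke $\varphi_{xy}=\varphi_{yx}$. The only cosmetic difference is that you multiply the system by $f$ at the outset to work directly with $V=fv$, whereas the paper keeps the factors $f\cdot\frac{1}{f}$ explicit in the integrand before integrating by parts; the substance is identical.
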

\begin{proof}
	Since $u\in W^{1,p}_{loc}(\Omega;\mathbb{B})$ and $\frac{1}{f}\in W^{1,\infty}(\Omega)$, we have $U=\frac{u}{f}\in W^{1,p}_{loc}(\Omega)$. Given $\varphi\in C_{0}^{\infty}(\Omega)$, equations \eqref{system21} and \eqref{system22} yield
	\begin{align*}
		\iint_{\Omega}f^2\nabla U\cdot \nabla V\varphi &= \iint_{\Omega}f\left\{f\frac{\partial}{\partial x}\left(\frac{u}{f}\right)\frac{\partial \varphi}{\partial x}+f\frac{\partial}{\partial y}\left(\frac{u}{f}\right)\frac{\partial \varphi}{\partial y}\right\}\\
		& = \iint_{\Omega}f\left\{\frac{1}{f}\frac{\partial (vf)}{\partial y}\frac{\partial \varphi}{\partial x}-\frac{1}{f}\frac{\partial(vf)}{\partial y}\frac{\partial \varphi}{\partial y}\right\}= -\iint_{\Omega}vf\left\{\frac{\partial^2\varphi}{\partial y\partial x}-\frac{\partial^2\varphi}{\partial x\partial y}\right\}=0.
	\end{align*}
Thus, $U$ is a weak solution of \eqref{conductivityequation1}. The proof of \eqref{conductivityequation2} is analogous.
\end{proof}

We will require the following lemma. 

\begin{lemma}\label{lemaregularidadconductivyeq}
	Let $G\subset \mathbb{C}$ be a bounded domain, and $\sigma \in W^{1,\infty}(G)$ such that $\frac{1}{\sigma}\in L_{\infty}(G)$. Denote the space of weak solutions in $W^{1,p}(G)$ of the conductivity equation $\operatorname{div}\sigma \nabla u=0$ by
	\begin{equation}\label{spaceofweaksolutionsconductivity}
		\operatorname{Sol}_{\sigma}^p(G):=\left\{u\in W^{1,p}(G)\, \Big{|} \forall \varphi\in C_0^{\infty}(G)\; \int_G \sigma \nabla u\cdot \nabla \varphi=0 \right\}.
	\end{equation}
The following statements hold:
\begin{itemize}
	\item[(i)] $\operatorname{Sol}_{\sigma}^p(G)$ is a closed subspace of $W^{1,p}(G)$.
	\item[(ii)] $\operatorname{Sol}_{\sigma}^p(G)\subset W^{2,q}_{loc}(G)$,  where $p\leqslant q\leqslant p^*$ if $1<p<2$, and $p\leqslant q< \infty$ if $p\geqslant 2$.
	\item[(iii)] $\operatorname{Sol}_{\sigma}^p(G)\subset C^{1,1-\frac{2}{r}}_{locdisk}(G)$, where $2<r<p^*$ if $1<p<2$, and $r>2$ if $p\geqslant2$. In particular, $\operatorname{Sol}_{\sigma}^p(G)\subset C^1(G)$.
	\item[(iv)] For every $G'\Subset G$, there exists $C_{G'}>0$ such that 
	\begin{equation}\label{inequalitylemmacondceq}
		\|u\|_{C^1(G')}\leqslant C_{G'}\|u\|_{W^{1,p}(G)},\quad \forall u\in \operatorname{Sol}_{\sigma}^p(G).
	\end{equation}   
\end{itemize}
\end{lemma}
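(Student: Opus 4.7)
The plan is to handle the four statements in sequence, rewriting the divergence-form equation $\operatorname{div}(\sigma\nabla u)=0$ in non-divergence form and then invoking standard interior elliptic regularity combined with the Sobolev embeddings collected in Remark \ref{remarksobolevembedding}.

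For (i), I observe that for each fixed $\varphi\in C_0^{\infty}(G)$ the map $u\mapsto \iint_G \sigma\nabla u\cdot\nabla\varphi\, dA$ is a bounded linear functional on $W^{1,p}(G)$, since $\sigma\nabla\varphi\in L_\infty(G)\subset L_{p'}(G)$. Hence $\operatorname{Sol}_\sigma^p(G)$ is the intersection of a family of closed kernels and therefore closed.

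For (ii), the key step is to exploit the regularity of $\sigma$ to pass from divergence to non-divergence form. Given $\varphi\in C_0^{\infty}(G)$, the function $\sigma^{-1}\varphi$ lies in $W_0^{1,\infty}(G)$, so by approximation (using that $\nabla(\sigma^{-1}\varphi)=-\sigma^{-2}\varphi\nabla\sigma+\sigma^{-1}\nabla\varphi$ is uniformly bounded) one may substitute it as a test function. This yields the distributional identity
\[
\Delta u = -\frac{\nabla\sigma}{\sigma}\cdot \nabla u\quad \text{in } G.
\]
Since $\nabla\sigma/\sigma\in L_\infty(G)$, the right-hand side lies in $L_p^{loc}(G)$ with norm controlled by $\|\nabla u\|_{L_p(G)}$ up to a constant depending only on $\|\sigma\|_{W^{1,\infty}}$ and $\|\sigma^{-1}\|_{L_\infty}$. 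The classical interior $W^{2,p}$ Calder\'on--Zygmund estimate for the Laplacian then gives $u\in W^{2,p}_{loc}(G)$ with quantitative control on nested subdomains. A Sobolev bootstrap via Remark \ref{remarksobolevembedding} improves $\nabla u$ from $L_p^{loc}$ to $L_q^{loc}$ in the allowed range ($q<p^*$ if $p<2$, $q<\infty$ if $p\geqslant 2$); feeding this back into the non-divergence equation and reapplying the interior $W^{2,q}$ estimate yields $u\in W^{2,q}_{loc}(G)$, proving (ii).

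Statement (iii) follows at once: for any $r$ in the indicated range we have $r>2$ and $u\in W^{2,r}_{loc}(G)$ by (ii), and the embedding $W^{2,r}(D)\hookrightarrow C^{1,1-2/r}(\overline D)$ (which is Remark \ref{remarksobolevembedding} applied to $\nabla u\in W^{1,r}$) on every disk $D\Subset G$ gives $u|_D\in C^{1,1-2/r}(\overline D)$; in particular $u\in C^1(G)$. For (iv), the constants in the interior Calder\'on--Zygmund estimate and in the Sobolev embedding on nested open sets $G'\Subset G''\Subset G$ depend only on $p$, $r$, $\sigma$ and the geometry of the domains, so chaining
\[
\|u\|_{C^1(\overline{G'})}\leqslant C_1\|u\|_{W^{2,r}(G'')}\leqslant C_2\|u\|_{W^{1,p}(G)}
\]
delivers (\ref{inequalitylemmacondceq}). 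The main obstacle will be rigorously justifying the passage to the non-divergence form under the mere $W^{1,\infty}$ regularity of $\sigma$: the multiplier $\sigma^{-1}\varphi$ is not smooth, so one must approximate it in $W_0^{1,p'}$ by smooth compactly supported functions and verify the convergence of the resulting pairings. A secondary bookkeeping point is to arrange the intermediate domains and cutoff functions so that the two-step bootstrap respects the exponent ranges in (ii) and (iii), particularly in the delicate case $1<p<2$, where one must stop just short of the Sobolev conjugate $p^*$.
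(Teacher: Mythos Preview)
Your proposal is correct and follows essentially the same route as the paper: both pass to the non-divergence form $\Delta u=-\sigma^{-1}\nabla\sigma\cdot\nabla u$ by inserting the test function $\varphi/\sigma\in W_0^{1,p'}$ (justified by density), then use interior $L_p$-regularity for the Laplacian to get $W^{2,p}_{loc}$, and finally bootstrap via Sobolev embedding to reach $W^{2,q}_{loc}$ and $C^{1,1-2/r}$. The only cosmetic difference is that the paper realizes the Calder\'on--Zygmund step explicitly via the logarithmic potential, writing $u=h-L_G[\sigma^{-1}\nabla\sigma\cdot\nabla u]$ with $h$ harmonic, and for (iv) invokes the Gilbarg--Trudinger interior $W^{2,r}$ estimate (Theorem~9.11) on the full non-divergence operator $\sigma\Delta+\nabla\sigma\cdot\nabla$ rather than on $\Delta$ with a source term; these are equivalent formulations of the same argument.
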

The proof is given in Appendix A.

\begin{corollary}
	If $f\in C^1(\Omega)\cap W^{1,\infty}(\Omega)$, then $\mathcal{A}_f^p(\Omega;\mathbb{B})\subset C^1(\Omega;\mathbb{B})$, that is, every weak solution of the main Vekua equation is a classical solution.
\end{corollary}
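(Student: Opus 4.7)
The plan is to read off $C^1$-regularity of $W=u+\mathbf{j}v\in\mathcal{A}_f^p(\Omega;\mathbb{B})$ from the regularity of the auxiliary functions $U=u/f$ and $V=fv$, by combining Proposition \ref{propscalarandvectorialconductivity} with Lemma \ref{lemaregularidadconductivyeq}. Concretely, Proposition \ref{propscalarandvectorialconductivity} supplies $U,V\in W^{1,p}_{loc}(\Omega)$ satisfying (weakly) the conductivity equations with conductivities $\sigma_1=f^2$ and $\sigma_2=f^{-2}$ respectively, and Lemma \ref{lemaregularidadconductivyeq}(iii) upgrades any $W^{1,p}$-weak solution of such an equation (on a subdomain) to a $C^1$ function, provided the conductivity lies in $W^{1,\infty}$ and is bounded away from zero. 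Once $U,V\in C^1(\Omega)$ is established, the identities $u=fU$ and $v=V/f$ recover $u,v\in C^1(\Omega)$ since $f,1/f\in C^1(\Omega)$.

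First I would verify that both $\sigma_1=f^2$ and $\sigma_2=f^{-2}$ satisfy the hypotheses of Lemma \ref{lemaregularidadconductivyeq}. The assumption $f\in W^{1,\infty}(\Omega)$ together with $f^{-1}\in L_{\infty}(\Omega)$ (which is built into the definition of proper conductivity) gives $\sigma_1\in W^{1,\infty}(\Omega)$ and $1/\sigma_1\in L_{\infty}(\Omega)$ immediately; for $\sigma_2$ one applies the chain rule $\nabla f^{-2}=-2f^{-3}\nabla f$, which belongs to $L_{\infty}(\Omega;\mathbb{C})$ by the same bounds, and $1/\sigma_2=f^2\in L_{\infty}(\Omega)$.

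Next, since the conclusion is local, for any open disk $D\Subset\Omega$ the restrictions $U|_D,V|_D$ belong to $W^{1,p}(D)$ and are weak solutions of the respective conductivity equations on $D$ (the admissible test functions $C_0^{\infty}(D)$ are a subclass of $C_0^{\infty}(\Omega)$, so the weak formulation is inherited). Lemma \ref{lemaregularidadconductivyeq}(iii) then gives $U|_D,V|_D\in C^1(D)$. Since $D\Subset\Omega$ was arbitrary, $U,V\in C^1(\Omega)$. Multiplying by $f\in C^1(\Omega)$ and $1/f\in C^1(\Omega)$ (the latter because $f$ is non-vanishing and $C^1$) yields $u,v\in C^1(\Omega)$, hence $W\in C^1(\Omega;\mathbb{B})$.

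There is no substantial obstacle: the only point that requires a moment of care is the verification that $\sigma_2=f^{-2}\in W^{1,\infty}(\Omega)$, which is where the non-vanishing assumption $f^{-1}\in L_{\infty}(\Omega)$ (implicit in the definition of a proper conductivity) is genuinely used. Everything else is the standard passage from $W^{1,p}_{loc}$-information on $\Omega$ to pointwise regularity on arbitrary compact subdomains, together with the elementary fact that products and quotients of $C^1$ functions (with non-vanishing denominator) are $C^1$.
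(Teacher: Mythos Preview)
Your proof is correct and follows essentially the same route as the paper: restrict to a subdomain $G\Subset\Omega$ (you use disks, the paper uses general $G$), invoke Proposition~\ref{propscalarandvectorialconductivity} to place $U=u/f$ and $V=fv$ in $\operatorname{Sol}_{f^2}^p(G)$ and $\operatorname{Sol}_{f^{-2}}^p(G)$, apply Lemma~\ref{lemaregularidadconductivyeq}(iii) to obtain $U,V\in C^1(G)$, and then recover $u,v\in C^1(G)$ via multiplication by $f,1/f\in C^1$. Your extra paragraph checking that $\sigma_2=f^{-2}\in W^{1,\infty}(\Omega)$ with $1/\sigma_2\in L_\infty(\Omega)$ is a welcome clarification that the paper leaves implicit.
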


\begin{proof}
	Let $W\in \mathcal{A}_f^p(\Omega;\mathbb{B})$ and $u=\operatorname{Sc}W$, $v=\operatorname{Vec}W$. Take $G\Subset \Omega$. By Propositions \ref{propregularityofsolutions} and \ref{propscalarandvectorialconductivity}, $U=\frac{u}{f}\in \operatorname{Sol}_{f^2}^p(G)$ and $V=fv\in \operatorname{Sol}_{\frac{1}{f^2}}^p(G)$. It follows from Lemma \ref{lemaregularidadconductivyeq}(iii) that $U,V\in C^1(G)$. Since $f\in C^1(\Omega)$, $u=fU, v=\frac{V}{f}\in C^1(G)$. Due to the arbitrariness of $G$, we conclude that $u,v\in C^1(\Omega)$.
\end{proof}

The following result generalizes a well-known property for classical solutions of \eqref{mainvekuaeq} when $f\in C^2(\Omega)$ (see \cite{CamposBicomplex, pseudoanalyticvlad}).
\begin{corollary}
	Suppose that $f\in W^{2,\infty}(\Omega)$. For every $W\in \mathcal{A}_f^p(\Omega;\mathbb{B})$, $u=\operatorname{Sc}W\in W^{2,p}_{loc}(\Omega)$ satisfies the Schr\"odinger equation 
	\begin{equation}\label{eqSchrodinger1}
		-\bigtriangleup u+q_fu=0, \quad \mbox{in } \Omega,
	\end{equation}
	with potential $q_f:=\frac{\bigtriangleup f}{f}$, and $v=\operatorname{Vec}W\in W^{2,p}_{loc}(\Omega)$ satisfies de Darboux-associated equation
	\begin{equation}\label{eqSchrodingerDarboux}
		-\bigtriangleup u+q_{\frac{1}{f}}u=0, \quad \mbox{in } \Omega,
	\end{equation}
	where $q_f=f\bigtriangleup \left(\frac{1}{f}\right)$ is the Darboux-transformed potential of $q_f$.
\end{corollary}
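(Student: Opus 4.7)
The plan is to reduce each identity to Proposition \ref{propscalarandvectorialconductivity} together with a product-rule computation, after first boosting the regularity of $U=u/f$ and $V=fv$ via Lemma \ref{lemaregularidadconductivyeq}. Since $f\in W^{2,\infty}(\Omega)$ and $f^{-1}\in L_\infty(\Omega)$, both $f^2$ and $f^{-2}$ belong to $W^{1,\infty}(\Omega)$ with $(f^2)^{-1},(f^{-2})^{-1}\in L_\infty(\Omega)$, so they are admissible conductivities. Hence Lemma \ref{lemaregularidadconductivyeq}(ii) applies and gives $U\in W^{2,p}_{loc}(\Omega)$, $V\in W^{2,p}_{loc}(\Omega)$. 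Because $f\in W^{2,\infty}(\Omega)$ and $1/f\in W^{2,\infty}(\Omega)$ (computing $\nabla(1/f)=-\nabla f/f^2$ and $\Delta(1/f)=-\Delta f/f^2+2|\nabla f|^2/f^3$, both essentially bounded), the products $u=fU$ and $v=V/f$ are then in $W^{2,p}_{loc}(\Omega)$ as well.

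Next I would expand the conductivity equations. Expanding $\operatorname{div}(f^2\nabla U)=0$ in the strong (a.e.) sense available now gives $f\Delta U+2\nabla f\cdot\nabla U=0$. On the other hand, $u=fU$ yields $\Delta u=f\Delta U+2\nabla f\cdot\nabla U+U\Delta f$, and substituting the previous identity collapses the first two terms, leaving $\Delta u=U\Delta f=\frac{\Delta f}{f}u=q_f u$. This is exactly $-\Delta u+q_fu=0$.

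For the vector part, expand $\operatorname{div}(f^{-2}\nabla V)=0$ to obtain $\Delta V=\frac{2}{f}\nabla f\cdot\nabla V$. Then from $v=V/f$ compute
\begin{equation*}
\Delta v=\frac{\Delta V}{f}-\frac{2\nabla f\cdot\nabla V}{f^2}-\frac{V\Delta f}{f^2}+\frac{2V|\nabla f|^2}{f^3},
\end{equation*}
and substitute the expression for $\Delta V$; the first two terms cancel and one is left with $\Delta v=v\bigl(\tfrac{2|\nabla f|^2}{f^2}-\tfrac{\Delta f}{f}\bigr)$. A direct computation of $q_{1/f}=f\Delta(1/f)$ gives precisely the same bracket, so $-\Delta v+q_{1/f}v=0$.

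The only delicate point is the regularity upgrade: without the $W^{2,p}_{loc}$ conclusion of Lemma \ref{lemaregularidadconductivyeq}(ii), the pointwise manipulations of $\Delta u$ and $\Delta v$ would not be legitimate and one would have to argue entirely through test functions against $\operatorname{div}(f^2\nabla U)=0$, which obscures the Darboux structure. Everything else is a routine product-rule verification valid a.e.\ in $\Omega$.
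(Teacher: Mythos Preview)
Your argument is correct and follows essentially the same route as the paper: invoke Proposition \ref{propscalarandvectorialconductivity} to place $U=u/f$ and $V=fv$ in the solution spaces of the conductivity equations, upgrade their regularity to $W^{2,p}_{loc}$ via Lemma \ref{lemaregularidadconductivyeq}(ii), transfer this to $u$ and $v$ using $f,1/f\in W^{2,\infty}$, and then verify the Schr\"odinger identities by a pointwise product-rule computation. The paper compresses the last step into the phrase ``a direct computation shows,'' whereas you carry it out explicitly; your added remark on why the $W^{2,p}_{loc}$ upgrade is needed before manipulating $\Delta u,\Delta v$ is a nice clarification.
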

\begin{proof}
	Since $U=\frac{u}{f}\in \operatorname{Sol}_{f^2}^p(\Omega)$ and $V=fv\in \operatorname{Sol}_{\frac{1}{f^2}}^p(\Omega)$, by Lemma \ref{lemaregularidadconductivyeq}, $U,V\in W^{2,p}_{loc}(\Omega)$ and satisfy \eqref{conductivityequation1} and \eqref{conductivityequation2} a.e. in $\Omega$, respectively. Since $f\in W^{2,\infty}(\Omega)$, $u,v\in W^{2,p}_{loc}(\Omega)$. A direct computation shows that equations \eqref{conductivityequation1} and \eqref{conductivityequation2} are equivalent to \eqref{eqSchrodinger1} and \eqref{eqSchrodingerDarboux}, respectively.
\end{proof}

Denote
\begin{equation}
	\widehat{\operatorname{Sol}}_f^p(\Omega)=\left\{u\in W^{1,p}(\Omega)\, \Big{|}\, U=\frac{u}{f}\in \operatorname{Sol}_{f^2}^p(\Omega)\right\},\; \widehat{\operatorname{Sol}}_{\frac{1}{f}}^p(\Omega)=\left\{v\in W^{1,2}(\Omega)\, \Big{|}\, V=fv\in \operatorname{Sol}_{\frac{1}{f^2}}^p(\Omega)\right\}.
\end{equation}
Since the product by a proper conductivity is a bounded operation in $W^{1,p}(\Omega)$, $\widehat{\operatorname{Sol}}_f^p(\Omega)$ and $\widehat{\operatorname{Sol}}_{\frac{1}{f}}^p(\Omega)$ are closed subspaces of $W^{1,p}(\Omega)$.

In the case when $f\in C^2(\Omega)$ and $\Omega$ is a simply connected domain, in \cite[Sec. 3.4]{CamposBicomplex} it was shown that for every classical solution $u\in C^2(\Omega)$ of  \eqref{eqSchrodinger1}, a solution $v\in C^2(\Omega)$ of \eqref{eqSchrodingerDarboux} such that $W=u+\mathbf{j}v$ is a classical solution of \eqref{mainvekuaeq} (sometimes called a {\it metaharmonic conjugate} of $u$ \cite[Sec. 33]{pseudoanalyticvlad}) is given by
\begin{equation}\label{conjugateVekua}
	v=\dfrac{1}{f}\overline{A}\left[\mathbf{j}f^2\overline{\boldsymbol{\partial}}\left(\dfrac{u}{f}\right)\right]+\dfrac{c}{f}
\end{equation}
where $c$ is an arbitrary complex constant and the operator $\overline{A}$ is defined as
\begin{equation}\label{operatorA}
	\overline{A}\Phi (z)= 2\int_{\gamma}\left(\operatorname{Sc}\Phi dx+\operatorname{Vec}\Phi dy\right) =2\operatorname{Sc}\int_{\gamma}\overline{\Phi(\zeta)}d\widehat{\zeta}.
\end{equation}
The integral is taken over any curve that joins a fixed point $z_0\in \Omega$ with $z$. For the integral to be well-defined, the condition
\begin{equation}\label{irrotationalcondition}
	\frac{\partial}{\partial y}\operatorname{Sc}\Phi-\frac{\partial}{\partial x}\operatorname{Vec}\Phi =0 \quad \mbox{ in }\Omega,
\end{equation}
is required. In such a case, $\overline{\boldsymbol{\partial}}\overline{A}W=W$. We generalize this construction to weak solutions for the case when $f\in W^{1,\infty}(\Omega)$ and $\Omega$ is a star-shaped domain with respect to $z=0$. In this case, taking $z_0=0$ and $\gamma$ as the line segment joining $0$ with $z$, we introduce the operator 
\[
I_fu(z):=\overline{A}\left[\mathbf{j}f^2\overline{\boldsymbol{\partial}}\left(\dfrac{u}{f}\right)\right]=\int_0^1f^2(tz)\left(xU_y(tz)-yU_x(tz)\right)dt, \; \mbox{ with } \; U=\dfrac{u}{f}.
\]
\begin{proposition}\label{propradialoprator}
	$I_f\in \mathcal{B}(\widehat{\operatorname{Sol}}_f^p(\Omega), W^{1,p}(\Omega))$ and $\nabla I_fu=f^2(U_x,-U_y)$ for all $u\in \widehat{\operatorname{Sol}}_f^p(\Omega)$.
\end{proposition}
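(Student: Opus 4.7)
The strategy is to prove the three claims in sequence: well-definedness of $I_f u$, the gradient formula, and the $W^{1,p}$-bound, with a closed-graph/direct estimate argument at the end.

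First, for $u\in \widehat{\operatorname{Sol}}_f^p(\Omega)$ the function $U=u/f$ lies in $\operatorname{Sol}_{f^2}^p(\Omega)$, so by Lemma \ref{lemaregularidadconductivyeq}(iii) it belongs to $C^1(\Omega)$. Combined with the continuity of $f\in W^{1,\infty}(\Omega)$ and the star-shape of $\Omega$ with respect to $0$ (which ensures $tz\in\Omega$ for all $t\in[0,1]$), the integrand of $I_f u(z)$ is continuous in $(t,z)\in[0,1]\times\Omega$, so $I_f u$ is a well-defined continuous function on $\Omega$.

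Next I would identify $I_f u$ as the primitive, on the star-shaped domain $\Omega$, of a closed $1$-form $\omega$ whose coefficients involve $f^2\nabla U$. The closedness of $\omega$ amounts precisely to the weak conductivity equation $\operatorname{div}(f^2\nabla U)=0$ from Proposition \ref{propscalarandvectorialconductivity}, so the Poincar\'e lemma (in its explicit cone-operator form on a star-shaped domain) delivers $\nabla I_f u = f^2(U_x,-U_y)$. A more hands-on route, robust to the weak regularity of $U$, is to verify the formula first for smooth $U$ by differentiating under the integral sign and cancelling the second-order terms against $\operatorname{div}(f^2\nabla U)=0$, and then pass to the general case by approximation in the $C^1$-topology on interior disks, using Lemma \ref{lemaregularidadconductivyeq}(iii)--(iv) for stability.

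The $L_p$-bound on the gradient is then immediate: since $f,f^{-1}\in L_{\infty}(\Omega)$,
\[
\|\nabla I_f u\|_{L_p(\Omega)}\leqslant \|f\|_{L_{\infty}(\Omega)}^2\|\nabla U\|_{L_p(\Omega)}\leqslant C\|u\|_{W^{1,p}(\Omega)}.
\]
The harder estimate is $\|I_f u\|_{L_p(\Omega)}\leqslant C\|u\|_{W^{1,p}(\Omega)}$. My plan is to switch to polar coordinates $z=re^{i\theta}$ and substitute $s=tr$ to rewrite $I_f u(re^{i\theta})=\int_0^r h(s,\theta)\,ds$ with $|h(s,\theta)|\leqslant \|f\|_{L_{\infty}(\Omega)}^2|\nabla U(se^{i\theta})|$. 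H\"older's inequality in $s$ gives $|I_f u(re^{i\theta})|^p\leqslant r^{p-1}\int_0^r|h(s,\theta)|^p\,ds$, and swapping the order of integration in polar coordinates and converting back to area measure yields
\[
\|I_f u\|_{L_p(\Omega)}^p\leqslant C\operatorname{diam}(\Omega)^{p+1}\int_\Omega\frac{|\nabla U(w)|^p}{|w|}\,dA_w.
\]

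The main obstacle is handling the apparent $|z|^{-1}$ singularity of the last integral at the center of the star-shape, which is what prevents a naive Minkowski argument from working for $p\leqslant 2$. The remedy is to split $\Omega$ into a ball $B(0,\rho)\Subset\Omega$ and its complement. On $B(0,\rho)$ the pointwise $C^1$-estimate from Lemma \ref{lemaregularidadconductivyeq}(iv) gives $\|\nabla U\|_{L_{\infty}(B(0,\rho))}\leqslant C_\rho\|u\|_{W^{1,p}(\Omega)}$, and the two-dimensional integrability of $|z|^{-1}$ near $0$ makes the contribution finite and controlled by $\|u\|_{W^{1,p}(\Omega)}^p$; on $\Omega\setminus B(0,\rho)$ the factor $|z|^{-1}$ is dominated by $\rho^{-1}$, so that piece is bounded by $\rho^{-1}\|\nabla U\|_{L_p(\Omega)}^p$. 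Adding the two contributions yields the desired bound for every $1<p<\infty$, completing the proof.
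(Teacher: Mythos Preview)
Your $L_p$-bound for $I_f u$ is correct and takes a genuinely different route from the paper. The paper introduces the family $B_t u(z)=|\nabla U(tz)|$, shows each $B_t\in\mathcal{B}(\widehat{\operatorname{Sol}}_f^p(\Omega),L_p(\Omega))$ (using the Hardy--Littlewood maximal function of $|\nabla U|^p$ at the origin to handle small $t$), and then invokes the uniform boundedness principle to get $\sup_t\|B_t\|<\infty$; integrating in $t$ gives the bound. Your polar-coordinate substitution $s=tr$ followed by H\"older in $s$ and Fubini produces the weighted integral $\int_\Omega|\nabla U(w)|^p|w|^{-1}\,dA_w$, and the split around the origin controls the singularity via Lemma~\ref{lemaregularidadconductivyeq}(iv) plus the local integrability of $|w|^{-1}$ in the plane. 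Both proofs ultimately rely on Lemma~\ref{lemaregularidadconductivyeq}(iv) to tame $|\nabla U|$ near the center of the star, but yours is more elementary in that it avoids Banach--Steinhaus and the maximal function.

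The gradient formula, however, is under-justified as written. The obstruction is not the regularity of $U$ (which is already $C^1$ by Lemma~\ref{lemaregularidadconductivyeq}(iii)) but that of $f$: since $f$ is only $W^{1,\infty}$, the coefficients $f^2U_y$ and $-f^2U_x$ of your $1$-form are merely continuous and $W^{1,p}_{loc}$, not $C^1$, so neither the textbook Poincar\'e lemma nor ``differentiate under the integral for smooth $U$'' applies directly (the latter still hits the non-$C^1$ factor $f^2$). Your fallback of approximating in $C^1$ via Lemma~\ref{lemaregularidadconductivyeq}(iii)--(iv) does not work either: those statements give regularity and an a~priori bound, not a supply of smooth \emph{solutions} to approximate by. The paper instead establishes the weak identity $\partial_x I_f u=\phi$ directly: pair $I_f u$ with $\varphi_x$ for $\varphi\in C_0^\infty(\Omega)$, swap the $t$- and $z$-integrals, integrate by parts in $z$, use the weak closedness $\psi_x=\phi_y$ to recognize the $t$-integrand as $\frac{d}{dt}\bigl(t\phi(tz)\bigr)$, and let $\epsilon\to0^+$ in $\int_\epsilon^1$ using the continuity of $\phi$. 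This is exactly how your Poincar\'e-lemma intuition is made rigorous at the available regularity; a mollification argument is also possible but needs care because the eroded domains $\Omega_\epsilon$ need not remain star-shaped.
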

\begin{proof}
	Let $u\in \widehat{\operatorname{Sol}}_f^p(\Omega)$ and $U=\frac{u}{f}$. By Lemma \eqref{lemaregularidadconductivyeq}(iii), $U\in C^1(\Omega)$ and hence $I_fu(z)$ exists for every $z\in \Omega$. Note that
	\begin{align*}
			\iint_{\Omega}|I_fu(z)|^pdA_z & \leqslant \iint_{\Omega}\left[\int_{0}^1|f(tz)|^{2p}|xU_y(tz)-yU_x(tz)|^pdt\right]dA_z\\
			&\leqslant \operatorname{diam}(\Omega)^p\|f\|_{L_{\infty}(\Omega)}^{2p}\int_0^1\left[\iint_{\Omega}|\nabla U(tz)|^pdA_{z}\right]dt.
	\end{align*}
Consider the family of operators $\{B_t\}_{0\leqslant t\leqslant 1}$ given by $B_tu(z):= |\nabla U(tz)|$ for $u\in \widehat{\operatorname{Sol}}_f^p(\Omega)$. For $t>0$, $t\Omega \subset \Omega$ (because $\Omega$ is radial) and changing variables we have
\[
\|B_tu\|_{L_p(\Omega)}^2=\iint_{\Omega}|\nabla U(tz)|^p=\frac{1}{t^2}\iint_{t\Omega}|\nabla U(\zeta)|^pdA_{\zeta}\leqslant \frac{1}{t^2}\|\nabla U\|_{L_p(\Omega)}^p\leqslant \frac{1}{t^2}M_f^p\|u\|_{W^{1,p}(\Omega)}^p,
\]
with $M_f=\|f^{-1}\|_{W^{1,\infty}(\Omega)}$. For $t=0$, $B_0u(z)=|\nabla U(0)|$ (a constant function). By Lemma \ref{lemaregularidadconductivyeq}(iv), there is a constant $C_0$, independent of $U$, satisfying $|\nabla U(0)|\leqslant C_0\|U\|_{W^{1,p}(\Omega)}\leqslant C_0M_f\|u\|_{W^{1,p}(\Omega)}$. Thus, $\{B_t\}_{0\leqslant t\leqslant 1}\subset \mathcal{B}(\widehat{\operatorname{Sol}}_f^p(\Omega), L_p(\Omega))$. 

Let $\rho_0=\operatorname{dist}(0,\Gamma)$ and $\rho_1=\sup_{z\in \Omega}|z|$. Hence $\rho_0\mathbb{D}\subset \Omega \subset \rho_1 \mathbb{D}$. From the previous calculation, $\|B_tu\|_{L_p(\Omega)}^p\leqslant \left(\frac{\rho_1}{\rho_0}\right)^2M_f^p\|u\|_{W^{1,p}(\Omega)}$ for $\frac{\rho_0}{\rho_1}\leqslant t\leqslant 1$. On the other hand, for $0<t<\frac{\rho_0}{\rho_1}$, we have $t\Omega \subset t\rho_1\mathbb{D}$ and 
\[
\operatorname{Area}(t\Omega) \geqslant \operatorname{Area}(t\rho_0\mathbb{D})= \left(\frac{\rho_0}{\rho_1}\right)^2t^2\rho_1^2\operatorname{Area}(\mathbb{D})=\left(\frac{\rho_0}{\rho_1}\right)^2\operatorname{Area}(t\rho_1 \mathbb{D}),
\]
from where we obtain
\begin{align*}
	\|B_tu\|_{L_p(\Omega)}^p & = \frac{\operatorname{Area}(\Omega)}{\operatorname{Area}(t\Omega)}\iint_{t\Omega}|\nabla U(\zeta)|^pdA_{\zeta}\\
	& \leqslant \operatorname{Area}(\Omega)\left(\frac{\rho_1}{\rho_0}\right)^2\frac{1}{\operatorname{Area}(t\rho_1\mathbb{D})}\iint_{t\rho_1 \mathbb{D}}|\nabla U(\zeta)|^pdA_{\zeta}\\
	&\leqslant \operatorname{Area}(\Omega)\left(\frac{\rho_1}{\rho_0}\right)^2 \sup_{0<r<\rho_0}\frac{1}{\operatorname{Area}(r\mathbb{D})}\iint_{r \mathbb{D}}|\nabla U(\zeta)|^pdA_{\zeta}.
\end{align*}
The supremum $H_{|\nabla U|^p}(0)=\sup\limits_{0<r<\rho_0}\frac{1}{\operatorname{Area(r\mathbb{D})}}\iint_{r \mathbb{D}}|\nabla U(\zeta)|^pdA_{\zeta}$ is the {\it Hardy-Littlewood maximal function} of $|\nabla U|^p$ evaluated at $z=0$. Since $\nabla U$ is continous $\Omega$, $z=0$ is a Lebesgue point and $H_{|\nabla U|^p}(0)<\infty$ (actually, $|\nabla U(0)|^p=\lim\limits_{r\rightarrow 0^+}\frac{1}{\operatorname{Area(r\mathbb{D})}}\iint_{r \mathbb{D}}|\nabla U(\zeta)|^pdA_{\zeta}$, see \cite[Sec. 3.4]{folland}). Finally, for all $u\in \operatorname{Sol}_f(\Omega)$ we obtain the inequality
\[
\sup_{0\leqslant t\leqslant 1}\|B_tu\|_{L_p(\Omega)}^p\leqslant \left\{C_0^pM_f^p\operatorname{Area}(\Omega)\|u\|_{W^{1,p}(\Omega)}^p, \left(\frac{\rho_1}{\rho_0}\right)^2\operatorname{Area}(\Omega)H_{|\nabla U|^p}(0), \left(\frac{\rho_1}{\rho_0}\right)^2M_f^p\|u\|_{W^{1,p}(\Omega)}^p\right\}<\infty.
\]
Since $\widehat{\operatorname{Sol}}_f^p(\Omega)$ is a Banach space, by the uniform boundedness principle \cite[Th. 5.13]{folland},\\ $M_1=\sup\limits_{0\leqslant t\leqslant 1}\|B_t\|_{\mathcal{B}(\widehat{\operatorname{Sol}}_f^p(\Omega),L_p(\Omega))}<\infty$. Thus,
\[
\|I_fu\|_{L_p(\Omega)}^p \leqslant \operatorname{diam}(\Omega)^p\|f\|_{L_{\infty}(\Omega)}^{2p}\int_0^1\|B_tu\|_{L_p(\Omega)}^pdt\leqslant \operatorname{diam}(\Omega)^p\|f\|_{L_{\infty}(\Omega)}^{2p}M_1^p\|u\|_{W^{1,p}(\Omega)}^p.
\]
For the differentiability, a direct computation shows that $\phi=f^2U_y$ and $\psi=-f^2U_x$ satisfy condition \eqref{irrotationalcondition}. Taking $\varphi\in C_0^{\infty}(\Omega)$ and $G\Subset \Omega$ with $\operatorname{supp}\varphi \subset G$, we have
\begin{align*}
	\iint_{\Omega}I_fu(z)\varphi_x(z)dA_z& =\iint_{G}  \left[\int_{0}^{1}(x\phi(tz)+y\psi(tz))dt\right] \varphi_x(z)dA_z\\
	& = \int_0^1\left[\iint_{G}(x\phi(tz)+y\psi(tz)) \varphi_x(z)dA_z\right]dt.
\end{align*}	
For every $t>0$, we obtain
\begin{align*}
	\iint_{G}(x\phi(tz)+y\psi(tz)) \varphi_x(z)dA_{z}& =-\iint_{G}(\phi(tz)+tx\phi_x(tz)+ty\psi_x(tz))\varphi(z)dA_z\\
	&=-\iint_{G}(\phi(tz)+tx\phi_x(tz)+ty\phi_y(tz))\varphi(z)dA_z,
\end{align*}
where the last equality is by \eqref{irrotationalcondition}. Note that $\frac{d}{dt}(t\phi(tz))=\phi(tz)+tx\phi_x(tz)+ty\phi_y(tz)\in L_p(\Omega)$ for every $t>0$. Given $\epsilon>0$, we get
\begin{align*}
	\int_{\epsilon}^1\left[\iint_{G}(\phi(tz)+tx\phi_x(tz)+ty\phi_y(tz))\varphi(z)dA_z\right]dt& =\iint_{G}\left[\int_{\epsilon} ^1\frac{d}{dt}(t\phi(tz))dt\right]\varphi(z)dA_z\\
	& =\iint_{G}\left(\phi(z)-\epsilon \phi(\epsilon z)\right)\varphi(z)dA_z.
\end{align*}
Since $\phi\in C(\overline{G})$ (because $U_x,U_y\in C(\overline{G}))$ and $f\in C(\overline{G})$ by Remark \ref{remarksobolevembedding}), the integrant is uniformly bounded in $z$ and $\varepsilon$. Thus, by dominated convergence
\begin{align*}
	\iint_{\Omega}I_fu(z)\varphi_x(z)dA_z =-\lim\limits_{\epsilon\rightarrow 0^+}\iint_{G}\left(\phi(z)-\epsilon \phi(\epsilon z)\right)\varphi(z)dA_z=-\iint_{G}\phi(z)\varphi(z)dA_z.
\end{align*}
Therefore $\frac{\partial}{\partial x}I_fu=\phi =f^2U_x$. Similarly, $\frac{\partial}{\partial y}I_fu=\psi =-f^2U_y$ and hence $\|\nabla I_fu\|_{L_p(\Omega)}\leqslant\|f\|_{L_{\infty}(\Omega)}^2M_f\|u\|_{W^{1,p}(\Omega)}$. Consequently, $I_f\in \mathcal{B}(\widehat{\operatorname{Sol}}_f^p(\Omega), W^{1,p}(\Omega))$. 
\end{proof}

\begin{theorem}\label{theoremmetaharmonic}
Let $\Omega\subset \mathbb{C}$ be a bounded star-shaped domain with respect to $z=0$. Given $u\in \widehat{\operatorname{Sol}}_f^p(\Omega)$, 
\begin{equation}\label{metharmonicconjugate1}
	v=\frac{1}{f}I_fu\in \widehat{\operatorname{Sol}}_{\frac{1}{f}}^p(\Omega) \quad \mbox{and}\quad W=u+\mathbf{j}v\in \mathcal{A}_f^p(\Omega;\mathbb{B})\cap W^{1,p}(\Omega;\mathbb{B}).
\end{equation}
If $v_1\in W^{1,p}(\Omega)$ is another function such that $W=u+\mathbf{j}v_2\in \mathcal{A}_f^p(\Omega;\mathbb{B})\cap W^{1,2}(\Omega;\mathbb{B})$, then $v_2=\frac{1}{f}I_fu+\frac{c}{f}$, where $c\in \mathbb{C}$.
\end{theorem}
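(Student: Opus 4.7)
The plan is to build the metaharmonic conjugate $v$ directly through the operator $I_f$ supplied by Proposition \ref{propradialoprator}, verify the main Vekua equation via its equivalent first-order system \eqref{system21}--\eqref{system22}, and then obtain uniqueness by exploiting the $\mathbb{R}$-linearity of the Vekua operator together with connectedness of $\Omega$. Given $u \in \widehat{\operatorname{Sol}}_f^p(\Omega)$, the function $U = u/f$ lies in $\operatorname{Sol}_{f^2}^p(\Omega)$, so Proposition \ref{propradialoprator} produces $I_f u \in W^{1,p}(\Omega)$ together with an explicit formula for $\nabla I_f u$ in terms of $f^2 U_x$ and $f^2 U_y$. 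Setting $v := I_f u/f$ and using $1/f \in W^{1,\infty}(\Omega)$, one gets $v \in W^{1,p}(\Omega)$, and hence $W = u + \mathbf{j}v \in W^{1,p}(\Omega;\mathbb{B})$.

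Because $fv = I_f u$, the gradient of $fv$ is precisely the one computed in Proposition \ref{propradialoprator}. Substituting this expression into the equivalent system \eqref{system21}--\eqref{system22} reduces both identities to tautologies in $U_x$ and $U_y$, so the system holds a.e.\ in $\Omega$; equivalently, $W$ is a weak solution of the main Vekua equation and therefore belongs to $\mathcal{A}_f^p(\Omega;\mathbb{B}) \cap W^{1,p}(\Omega;\mathbb{B})$. Applying Proposition \ref{propscalarandvectorialconductivity} to this $W$, the vector-part transform $V = fv = I_f u$ is a weak solution of $\operatorname{div}(f^{-2}\nabla V)=0$; since $I_f u$ already lies in $W^{1,p}(\Omega)$, it follows that $v \in \widehat{\operatorname{Sol}}_{1/f}^p(\Omega)$.

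For uniqueness, observe that the Vekua operator $W \mapsto \overline{\boldsymbol{\partial}}W - (\overline{\boldsymbol{\partial}}f/f)\overline{W}$ is $\mathbb{R}$-linear, because the bicomplex conjugation $\overline{\cdot}$ is. Thus if $v_2 \in W^{1,p}(\Omega)$ also satisfies $u + \mathbf{j}v_2 \in \mathcal{A}_f^p(\Omega;\mathbb{B})$, the difference $\mathbf{j}(v_2 - v)$ lies in $\mathcal{A}_f^p(\Omega;\mathbb{B}) \cap W^{1,p}(\Omega;\mathbb{B})$ and has zero scalar part. Feeding $u \equiv 0$ and $\tilde v := v_2 - v$ into \eqref{system21}--\eqref{system22} gives $\partial_x(f\tilde v) = \partial_y(f\tilde v) = 0$, whence $f\tilde v$ is a constant $c \in \mathbb{C}$ on the connected domain $\Omega$, so $v_2 = v + c/f = I_f u/f + c/f$. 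The main obstacle in this argument is the bookkeeping of the previous paragraph: matching the explicit form of $\nabla I_f u$ provided by Proposition \ref{propradialoprator} with the precise signs in \eqref{system21}--\eqref{system22}; once this verification is carried out, everything else follows directly from results established earlier.
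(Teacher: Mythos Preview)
Your proposal is correct and follows essentially the same approach as the paper: both hinge on the gradient formula for $I_fu$ from Proposition~\ref{propradialoprator}, and the uniqueness argument is identical. The only cosmetic differences are that the paper verifies the Vekua equation via the $\overline{\boldsymbol{\partial}}$-identity $\bigl(\overline{\boldsymbol{\partial}}-\tfrac{\overline{\boldsymbol{\partial}}f}{f}\mathbf{C}_{\mathbb{B}}\bigr)W=f\overline{\boldsymbol{\partial}}U+\tfrac{\mathbf{j}}{f}\overline{\boldsymbol{\partial}}I_fu=0$ rather than the real system \eqref{system21}--\eqref{system22}, and it checks $v\in\widehat{\operatorname{Sol}}_{1/f}^p(\Omega)$ by a direct weak-formulation computation rather than invoking Proposition~\ref{propscalarandvectorialconductivity} after the fact.
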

\begin{proof}
	Let $u\in \mbox{Sol}_f^p(\Omega)$. By Proposition \ref{propradialoprator}, $v=\frac{1}{f}I_fu\in W^{1,p}(\Omega)$ and $\nabla I_fu=f^2(U_y,-U_x)$. Given $\varphi\in C_0^{\infty}(\Omega)$, we get
	\begin{align*}
		\iint_{\Omega}\frac{1}{f^2}\nabla (fv)\cdot \nabla \varphi =\iint_{\Omega}\frac{1}{f^2}\nabla I_fu\cdot \nabla \varphi = \iint_{\Omega}(U_y\varphi_x-U_x\varphi_y)=-\iint_{\Omega}U(\varphi_{xy}-\varphi_{yx})=0.
	\end{align*}
Thus, $v\in \widehat{\operatorname{Sol}}_{\frac{1}{f}}^p(\Omega)$. Taking $W=u+\mathbf{j}v=fU+\mathbf{j}\frac{1}{f}I_fu\in W^{1,p}(\Omega;\mathbb{B})$, we obtain
\[
\left(\overline{\boldsymbol{\partial}}-\frac{\overline{\boldsymbol{\partial}}f}{f}\mathbf{C}_{\mathbb{B}}\right)W= f\overline{\boldsymbol{\partial}}U+\frac{\mathbf{j}}{f}\overline{\boldsymbol{\partial}}I_fu=f\overline{\boldsymbol{\partial}}U-f\overline{\boldsymbol{\partial}}U=0,
\] 
and $W\in \mathcal{A}_f^p(\Omega;\mathbb{B})$. If $v_2\in W^{1,p}(\Omega)$ is another function such that $W_2=u+\mathbf{j}v_2\in \mathcal{A}_f^p(\Omega;\mathbb{B})$, then $\mathbf{j}(v-v_2)=W-W_2\in \mathcal{A}_f^p(\Omega;\mathbb{B})$, and $0=\left(\overline{\boldsymbol{\partial}}-\frac{\overline{\boldsymbol{\partial}}f}{f}\mathbf{C}_{\mathbb{B}}\right)W=\frac{\mathbf{j}}{f}\overline{\boldsymbol{\partial}}f(v-v_2)$. This is equivalent to $\nabla f(v_2-v_2)=0$. Therefore $v_2=v+\frac{c}{f}$ for some constant $c\in \mathbb{C}$.
\end{proof}

A similar result is valid for the operator associated with $\frac{1}{f}$ and $v$.

\begin{theorem}\label{propcompletionv}
	Let $\Omega\subset \mathbb{C}$ be a star-shaped bounded domain with respect to $z=0$. The operator $I_{\frac{1}{f}}\in \mathcal{B}(\widehat{\operatorname{Sol}}_{\frac{1}{f}}^p(\Omega), W^{1,p}(\Omega))$ and for any $v\in \widehat{\operatorname{Sol}}_{\frac{1}{f}}^p(\Omega)$
	\begin{equation}\label{metharmonicconjugate2}
		u=-fI_{\frac{1}{f}}u\in \widehat{\operatorname{Sol}}_{f}^p(\Omega) \quad \mbox{and}\quad W=u+\mathbf{j}v\in \mathcal{A}_f^p(\Omega;\mathbb{B})\cap W^{1,p}(\Omega;\mathbb{B}).
	\end{equation}
If $u_2\in W^{1,p}(\Omega)$ is another function such that $W=u+\mathbf{j}v\in \mathcal{A}_f^p(\Omega;\mathbb{B})$, then $u_2=-fI_{\frac{1}{f}}v+fc$, where $c\in \mathbb{C}$.
\end{theorem}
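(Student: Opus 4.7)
The strategy is to reduce the result to Theorem \ref{theoremmetaharmonic} applied with $f$ replaced by $\tilde{f} := 1/f$, via the involution $W \mapsto \mathbf{j}W$ which intertwines $\mathcal{A}_{1/f}^p(\Omega;\mathbb{B})$ and $\mathcal{A}_f^p(\Omega;\mathbb{B})$. First I would verify that $1/f$ is itself a proper conductivity: $\nabla(1/f) = -\nabla f/f^2 \in L_\infty(\Omega)$ (from $\nabla f \in L_\infty$ and $1/f \in L_\infty$), and $1/\tilde{f} = f \in L_\infty(\Omega)$. Proposition \ref{propradialoprator} applied to $\tilde f$ then immediately gives $I_{1/f} \in \mathcal{B}(\widehat{\operatorname{Sol}}_{1/f}^p(\Omega), W^{1,p}(\Omega))$.

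Next, given $v \in \widehat{\operatorname{Sol}}_{1/f}^p(\Omega)$, I would apply Theorem \ref{theoremmetaharmonic} with $f$ replaced by $\tilde{f}$, using the tautology $\widehat{\operatorname{Sol}}_{1/\tilde{f}}^p(\Omega) = \widehat{\operatorname{Sol}}_f^p(\Omega)$. This yields $\tilde{u} := fI_{1/f}v \in \widehat{\operatorname{Sol}}_f^p(\Omega)$ and $\widetilde{W} := v + \mathbf{j}\tilde{u} \in \mathcal{A}_{1/f}^p(\Omega;\mathbb{B})\cap W^{1,p}(\Omega;\mathbb{B})$. The crux is the symmetry step: since $\overline{\boldsymbol{\partial}}(1/f)/(1/f) = -\overline{\boldsymbol{\partial}}f/f$, the algebra $\mathbb{B}$ is commutative, and $\overline{\mathbf{j}Z} = -\mathbf{j}\overline{Z}$, a short computation shows $\overline{\boldsymbol{\partial}}(\mathbf{j}\widetilde{W}) = \mathbf{j}\overline{\boldsymbol{\partial}}\widetilde{W} = -\mathbf{j}(\overline{\boldsymbol{\partial}}f/f)\overline{\widetilde{W}} = (\overline{\boldsymbol{\partial}}f/f)\overline{\mathbf{j}\widetilde{W}}$. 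Therefore $W := \mathbf{j}\widetilde{W} \in \mathcal{A}_f^p(\Omega;\mathbb{B}) \cap W^{1,p}(\Omega;\mathbb{B})$, and expanding one finds $W = \mathbf{j}v - \tilde{u} = -fI_{1/f}v + \mathbf{j}v$, precisely the representation \eqref{metharmonicconjugate2} with $u = -fI_{1/f}v \in \widehat{\operatorname{Sol}}_f^p(\Omega)$.

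For uniqueness, if $u_2 \in W^{1,p}(\Omega)$ also gives $W_2 = u_2 + \mathbf{j}v \in \mathcal{A}_f^p(\Omega;\mathbb{B})$, then $g := u - u_2 \in W^{1,p}(\Omega)$ is a scalar (complex-valued) weak solution of the main Vekua equation. Since $\overline{g} = g$, the equation collapses to $\overline{\boldsymbol{\partial}}g = (\overline{\boldsymbol{\partial}}f/f)g$, equivalently $\overline{\boldsymbol{\partial}}(g/f) = 0$, so $g/f$ is a complex-valued $\mathbb{B}$-analytic function; separating scalar and vector parts of $\overline{\boldsymbol{\partial}}(g/f) = \tfrac{1}{2}((g/f)_x + \mathbf{j}(g/f)_y)$ forces $g/f$ to be a constant $c \in \mathbb{C}$, giving $u_2 = u - cf = -fI_{1/f}v + fc'$ with $c' = -c$. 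The only nontrivial obstacle is correctly identifying and verifying the $\mathbf{j}$-symmetry $\mathcal{A}_{1/f}^p \leftrightarrow \mathcal{A}_f^p$; once that short verification is in hand, the result is obtained from Theorem \ref{theoremmetaharmonic} essentially by substitution.
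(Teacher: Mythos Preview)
Your proposal is correct. The paper itself does not give a detailed proof of this theorem; it simply states ``A similar result is valid for the operator associated with $\frac{1}{f}$ and $v$'' and leaves the verification to the reader, so your argument is precisely the kind of justification the paper omits.

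Your route via the involution $W\mapsto \mathbf{j}W$ intertwining $\mathcal{A}_{1/f}^p$ and $\mathcal{A}_f^p$ is a clean way to make the reduction to Theorem~\ref{theoremmetaharmonic} (with $\tilde f=1/f$) explicit, rather than repeating the direct computation of that proof mutatis mutandis. Either reading of the paper's hint leads to the same conclusion, and the checks you give---that $1/f$ is again a proper conductivity, the identity $\overline{\boldsymbol{\partial}}(1/f)/(1/f)=-\overline{\boldsymbol{\partial}}f/f$, and $\overline{\mathbf{j}Z}=-\mathbf{j}\,\overline{Z}$---are exactly what is needed. Your uniqueness argument, reducing to $\overline{\boldsymbol{\partial}}(g/f)=0$ for a scalar $g$ and concluding $g/f$ is constant, is also valid; alternatively you could have pushed the uniqueness through the same $\mathbf{j}$-symmetry and invoked the uniqueness clause of Theorem~\ref{theoremmetaharmonic} directly, but your direct computation is equally short.
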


\begin{remark}\label{remarkHilberttransform}
	Suppose that $p=2$ and $\Omega$ is a Lipschitz domain. Since $\frac{1}{f}\in L_{\infty}(\Omega)$, Eq. \eqref{conductivityequation1} is strongly elliptic in $\Omega$ \cite[p. 119]{maclean}. Let $\varphi\in W^{1,\frac{1}{2}}(\Gamma)$. There exists a unique $u\in \widehat{\operatorname{Sol}}_f^2(\Omega)$ such that $\operatorname{tr}_{\Gamma}\left(\frac{u}{f}\right)=\varphi$, and moreover, there is a constant $C_{\Omega,f}>0$, which does not depend on $\varphi$ such that $\|u\|_{W^{1,2}(\Omega)}\leqslant C_{\Omega,f}\|\varphi\|_{W^{1,\frac{1}{2}}(\Gamma)}$ (see \cite[Sec. 6.2]{evans} and \cite[Ch. 4]{maclean}). Thus, it is possible to define a composition of operators as in the following diagram 
	\[
	W^{1,\frac{1}{2}}(\Gamma) \overset{u}{\longrightarrow} \widehat{\operatorname{Sol}}_f^2(\Omega) \overset{\frac{1}{f}I_fu}{\longrightarrow}\widehat{\operatorname{Sol}}_{\frac{1}{f}}^2(\Omega)\overset{\operatorname{tr}_{\Gamma}}{\longrightarrow} W^{1,\frac{1}{2}}(\Gamma).
	\]
	Then the operator $\mathcal{H}_f:W^{1,\frac{1}{2}}(\Gamma)\rightarrow W^{1,\frac{1}{2}}(\Gamma)$ given by $\mathcal{H}_f\varphi:=\operatorname{tr}_{\Gamma}\left(\frac{1}{f}I_fu\right)$ is a type of ``Hilbert transform", in the sense that $I+\mathbf{j}\mathcal{H}_f: W^{1,\frac{1}{2}}(\Gamma)\rightarrow W^{1,\frac{1}{2}}(\Gamma;\mathbb{B})$ provides the trace of a function $W\in \mathcal{A}_f^2(\Omega;\mathbb{B})\cap W^{1,2}(\Omega;\mathbb{B})$ such that $\operatorname{tr}_{\Gamma}\operatorname{Sc}W=\varphi$.
\end{remark}

Finally, Proposition \ref{theoremmetaharmonic} allows us to extend the so-called {\it Runge property} to the main Vekua equation, for the case $p=2$.

\begin{theorem}
	Let  $\Omega_1\Subset \Omega$ be a bounded Lipschitz star-shaped domain with respect to $z=0$ such that $\Omega\setminus \overline{\Omega_1}$ is connected. Suppose that $f$ is real-valued. Then for every $\varepsilon>0$ and $W_1\in \mathcal{A}_f^2(\Omega_1;\mathbb{B})\cap W^{1,2}(\Omega_1;\mathbb{B})$, there exists $W_2\in \mathcal{A}_f^2(\Omega;\mathbb{B})\cap W^{1,2}(\Omega;\mathbb{B})$ satisfying 
	\begin{equation}\label{rungeproperty}
		\left\|W_1-W_2|_{\Omega_1}\right\|_{W^{1,2}(\Omega_1;\mathbb{B})}<\varepsilon.
	\end{equation}
\end{theorem}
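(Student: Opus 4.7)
My plan is to reduce the Runge property for the main Vekua equation to the known Runge property for the strongly elliptic conductivity equation $\operatorname{div}(f^2\nabla U)=0$ \cite{Colton1,minerunge}, which is available under the topological hypothesis that $\Omega\setminus\overline{\Omega_1}$ is connected. Decompose $W_1=u_1+\mathbf{j}v_1$; by Proposition~\ref{propscalarandvectorialconductivity}, $U_1:=u_1/f\in \operatorname{Sol}_{f^2}^2(\Omega_1)$. Since $f$ is real-valued, the equation has real coefficients, and the Runge property applies componentwise to the complex-valued $U_1$. For any $\varepsilon_1>0$, I would obtain $U_2\in \operatorname{Sol}_{f^2}^2(\Omega)$ with $\|U_1-U_2|_{\Omega_1}\|_{W^{1,2}(\Omega_1)}<\varepsilon_1$; then $u_2:=fU_2\in \widehat{\operatorname{Sol}}_f^2(\Omega)$ and the boundedness of multiplication by $f\in W^{1,\infty}(\Omega)$ on $W^{1,2}$ yields $\|u_1-u_2|_{\Omega_1}\|_{W^{1,2}(\Omega_1)}\leqslant C_f\varepsilon_1$.

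Next I would construct a metaharmonic conjugate of $u_2$ on the whole of $\Omega$ (simply connected by the standing hypothesis of Section~2.3). The system \eqref{system21}--\eqref{system22} shows that any conjugate $v$ of $u_2$ must satisfy $\nabla(fv)=\mathbf{F}_2:=(-f^2U_{2,y},f^2U_{2,x})$, and $\operatorname{div}(f^2\nabla U_2)=0$ is precisely the weak curl-freeness of $\mathbf{F}_2\in L^2(\Omega;\mathbb{C}^2)$. The Poincar\'e lemma for $L^2$ vector fields on a bounded, simply connected, Lipschitz domain produces $V_2\in W^{1,2}(\Omega)$, unique up to an additive constant, with $\nabla V_2=\mathbf{F}_2$. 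Setting $\tilde v_2:=V_2/f\in W^{1,2}(\Omega)$ and $\tilde W_2:=u_2+\mathbf{j}\tilde v_2$, a direct verification (reversing the computation in Proposition~\ref{propscalarandvectorialconductivity}) gives $\tilde W_2\in \mathcal{A}_f^2(\Omega;\mathbb{B})\cap W^{1,2}(\Omega;\mathbb{B})$.

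To control the vector-part error on $\Omega_1$ I would use the uniqueness statement of Theorem~\ref{theoremmetaharmonic}, which applies since $\Omega_1$ is star-shaped: both $v_1$ and $\tilde v_2|_{\Omega_1}$ are metaharmonic conjugates on $\Omega_1$, hence
\[
v_1=\tfrac{1}{f}I_fu_1+\tfrac{c_1}{f},\qquad \tilde v_2|_{\Omega_1}=\tfrac{1}{f}I_f(u_2|_{\Omega_1})+\tfrac{c_2}{f},
\]
for some $c_1,c_2\in\mathbb{C}$, where $I_f$ is the radial operator on $\Omega_1$. Replacing $\tilde v_2$ by $v_2:=\tilde v_2+\tfrac{c_1-c_2}{f}$ on $\Omega$ still yields $W_2:=u_2+\mathbf{j}v_2\in\mathcal{A}_f^2(\Omega;\mathbb{B})\cap W^{1,2}(\Omega;\mathbb{B})$, because $\mathbf{j}c/f$ is itself a solution of the main Vekua equation for every $c\in\mathbb{C}$ (a direct check using $f^\dagger=f$). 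The identity $v_1-v_2|_{\Omega_1}=\tfrac{1}{f}I_f(u_1-u_2|_{\Omega_1})$ then follows, so the boundedness of $I_f$ (Proposition~\ref{propradialoprator}) and $1/f\in W^{1,\infty}(\Omega_1)$ give $\|v_1-v_2|_{\Omega_1}\|_{W^{1,2}(\Omega_1)}\leqslant C_f'\varepsilon_1$. Summing the scalar and vector estimates and choosing $\varepsilon_1$ small enough delivers \eqref{rungeproperty}.

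The main obstacle is the global construction of $\tilde v_2$ on $\Omega$, because Theorem~\ref{theoremmetaharmonic} only supplies the radial-integral formula on star-shaped domains, whereas $\Omega$ is only assumed simply connected. Handling this via the Poincar\'e lemma requires keeping $V_2$ in $W^{1,2}(\Omega)$ globally (not merely in $W^{1,2}_{\mathrm{loc}}$), which relies on the global $W^{1,2}$-approximation supplied by the elliptic Runge step together with the Poincar\'e inequality on bounded Lipschitz domains. A secondary technical point is matching the free constants of the two conjugate constructions (the radial one on $\Omega_1$ and the Poincar\'e one on $\Omega$); this is resolved by the additive adjustment $\mathbf{j}(c_1-c_2)/f$, which stays in $\mathcal{A}_f^2(\Omega;\mathbb{B})$.
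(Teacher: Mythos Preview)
Your approach is correct and follows the same overall strategy as the paper: reduce to the Runge property for the conductivity equation on the scalar part, lift back to the Vekua equation via a metaharmonic conjugate, and control the vector-part error through the boundedness of $I_f$ on the star-shaped domain $\Omega_1$.

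The one substantive difference lies in how the global conjugate of $u_2$ on $\Omega$ is produced. The paper applies the radial operator $I_f$ directly on $\Omega$, setting $W_2=fU_2+\tfrac{\mathbf{j}}{f}I_f(fU_2)+\tfrac{c}{f}\mathbf{j}$ (same constant $c$ as for $W_1$), and then uses the restriction identity $I_f[fU_2|_{\Omega_1}]=I_f[fU_2]|_{\Omega_1}$, so that no constant matching is needed and the vector-part error is immediately $\tfrac{1}{f}I_f\bigl(f(U_1-U_2|_{\Omega_1})\bigr)$. This tacitly requires $\Omega$ itself to be star-shaped with respect to $0$ (so that Theorem~\ref{theoremmetaharmonic} applies there), a hypothesis not spelled out in the statement. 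You instead build the potential $V_2$ via the Poincar\'e lemma on a simply connected Lipschitz $\Omega$ and afterwards reconcile the two conjugate constructions on $\Omega_1$ by the additive adjustment $\mathbf{j}(c_1-c_2)/f$. Your route is slightly more general---it needs only the standing simple-connectedness of $\Omega$, not star-shapedness---at the price of the extra constant-matching step and of securing $V_2\in W^{1,2}(\Omega)$ globally (which you correctly flag as the main technical point); the paper's route is shorter but leans on the stronger geometric assumption on $\Omega$.
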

\begin{proof}
	Let $W_1\in \mathcal{A}_f^2(\Omega_1;\mathbb{B})\cap W^{1,2}(\Omega_1;\mathbb{B})$ and $\varepsilon>0$. By Proposition \ref{theoremmetaharmonic}, $W=u+\frac{\mathbf{j}}{f}I_fu+\frac{c}{f}\mathbf{j}$, where $u=\operatorname{Sc}W\in \widehat{\operatorname{Sol}}_f^2(\Omega_1)$ and $c\in \mathbb{C}$. Since $f$ is a real-valued proper conductivity, Eq. \eqref{conductivityequation1} satisfies the Runge property \cite[Th. 3.9]{minerunge}. Hence, for $U_1=\frac{u}{f}\in \operatorname{Sol}_{f^2}^2(\Omega_1)$, there exists $U_2\in \operatorname{Sol}_{f^2}^2(\Omega)$ satisfying $\|U_1-U_2|_{\Omega_1}\|_{W^{1,2}(\Omega_1)}<\frac{\varepsilon}{C_f}$. where $C_f=\|f\|_{W^{1,\infty}(\Omega)}\left(1+\left\|\frac{1}{f}I_f\right\|_{\mathcal{B}(\operatorname{Sol}_f^2(\Omega_1,), W^{1,2}(\Omega_1))}\right)$. By Proposition \ref{theoremmetaharmonic}, $W_2=fU_1+\frac{\mathbf{j}}{f}I_f(fU_2)+\frac{c}{f}\mathbf{j}\in \mathcal{A}_f^2(\Omega;\mathbb{B})\cap W^{1,2}(\Omega;\mathbb{B})$. Note that $I_f\left[fU_2|_{\Omega_1}\right]=I_f[fU_2]|_{\Omega_1}$. Thus,
	\begin{align*}
		\|W_1-W_2|_{\Omega_1}\|_{W^{1,2}(\Omega_1;\mathbb{B})} &\leqslant \|fU_1-fU_2\|_{W^{1,2}(\Omega_1)}+\left\|\frac{1}{f}I_f[fU_1]-\frac{1}{f}I_f\left[fU_2|_{\Omega_1}\right]\right\|_{W^{1,2}(\Omega_1)}\\
		&\leqslant\left(\|f\|_{W^{1,\infty}(\Omega)}+\left\|\frac{1}{f}I_f\right\|_{\mathcal{B}(\operatorname{Sol}_f^2(\Omega_1,), W^{1,2}(\Omega_1))}\|f\|_{W^{1,\infty}(\Omega)}\right) \|U_1-U_2\|_{W^{1,2}(\Omega_1)}\\
		&<\varepsilon.
	\end{align*}
\end{proof}
\section{Conclusions}
We provide a definition of the Vekua-Bergman space of weak solutions of Eq. \eqref{bicomplexvekua}, along with a discussion of its completeness and the existence of a reproducing kernel. The existence of the reproducing kernel leads us to an explicit expression for the orthogonal projection onto the Vekua-Bergman space. Furthermore, an explicit expression for the orthogonal complement was obtained. For the case of the main Vekua equation, the relations between the scalar and vector parts of its solutions with the conductivity equation to the class of weak solutions were extended. Additionally, we present the construction of metaharmonic conjugates using a bounded operator in $W^{1,p}(\Omega)$. This construction enables us to establish a Runge property for the main Vekua equation, which is an important tool in constructing complete systems of solutions \cite{hugo,minerunge}. 
\section*{Acknowledgments}
The author thanks to  Instituto de Matem\'aticas de la U.N.A.M. (M\'exico), where this work was developed, and CONAHCYT for their support through the program {\it Estancias Posdoctorales por México Convocatoria 2023 (I)},


\appendix \label{appendixproofs}

\section{Appendix: Proof of some technical results}
First, we establish the continuity of the Cauchy operator $C_{\Gamma}$.

\begin{proof}[Proof of Proposition \ref{propkcauchyintegraloperator}]
	Let  $\varphi\in C^{\infty}(\overline{\Omega})$. By (\ref{BorelPompeiuformulacomplex}), $C_{\Gamma}\left[\varphi|_{\Gamma}\right](z)=\varphi(z)-A_{\Omega}\left[\frac{\partial \varphi}{\partial z^*}\right](z)$ for $z\in \Omega$. Due to Proposition \ref{proptheodorescucomplex}(ii), $\varphi-A_{\Omega}\left[\frac{\partial \varphi}{\partial z^*}\right] \in W^{1,p}(\Omega)$, and hence
	\[
	\|C_{\Gamma}\left[\varphi|_{\Gamma}\right]\|_{W^{1,p}(\Omega)}\leqslant \|\varphi\|_{W^{1,p}(\Omega)}+\left\|A_{\Omega}\left[\frac{\partial \varphi}{\partial z^*}\right]\right\|_{W^{1,p}(\Omega)}\leqslant M_1\|\varphi\|_{W^{1,p}(\Omega)},
	\] 
	with $M_1=\max\{1,\|A_{\Omega}\|_{\mathcal{B}\left(L_p(\Omega),W^{1,p}(\Omega)\right)}\}$. Since $\Omega$ is a Lipschitz domain, $\overline{C_0^{\infty}(\overline{\Omega})}^{W^{1,p}}=W^{1,p}(\Omega)$ (see \cite[Th.  3.29]{maclean}). Consequently, $C_{\Gamma} \operatorname{tr}_{\Gamma}$ can be extended to a bounded operator $\widetilde{C}_{\Gamma}: W^{1,p}(\Omega)\rightarrow W^{1,p}(\Omega)$. Let $u\in W^{1,p}(\Omega)$ and take a sequence $\{\phi_n\}\subset C^{\infty}(\overline{\Omega})$ with $\phi_n\rightarrow u$ in $W^{1,p}(\Omega)$, and $\phi_n|_{\Gamma}\rightarrow \operatorname{tr}_{\Gamma}u$ in $W^{1,1-\frac{1}{p}}(\Gamma)$. Let $v=\widetilde{C}_{\Gamma}u=\displaystyle \lim_{n\rightarrow \infty}C_{\Gamma}\left[\phi_n|_{\Gamma}\right]$ (in $W^{1,p}(\Omega)$). Since $C_{\Gamma}\left[\phi_n|_{\Gamma}\right]$ belongs to the analytic complex Bergman space $A_p(\Omega)$ and $C_{\Gamma}\left[\phi_n|_{\Gamma}\right]\rightarrow v$ in $L_p(\Omega)$, hence $C_{\Gamma}\left[\phi_n|_{\Gamma}\right]\rightarrow v$ uniformly on compact subsets of $\Omega$ \cite[Ch. 1]{duren}. Take $K\subset \Omega$ a compact, and let $\rho=\operatorname{dist}(K,\Gamma)>0$. For $z\in K$ we have
	\[
	|C_{\Gamma}[\operatorname{tr}u](z)-C_{\Gamma}\left[\phi_n|_{\Gamma}\right](z)|\leqslant \frac{1}{2\pi \rho}\int_{\Gamma}|\operatorname{tr}u(z)-\phi_n|_{\Gamma}(z)||dz|\leqslant \frac{|\Gamma|^{\frac{1}{q}}}{2\pi \rho}\|\operatorname{tr}u-\phi_n|_{\Gamma}\|_{L_p(\Gamma)}\rightarrow 0, \quad n\rightarrow \infty.
	\] 
	Thus, $C_{\Gamma}\left[\phi_n|_{\Gamma}\right]\rightarrow C_{\Gamma}\operatorname{tr}_{\Gamma}u$ uniformly on $K$. Since $K$ was arbitrary, we obtain that $C_{\Gamma}\operatorname{tr}_{\Gamma}u=v\in W^{1,p}(\Omega)$, and $ \|C_{\Gamma}\operatorname{tr}_{\Gamma}u\|_{W^{1,p}(\Omega)}= \displaystyle \lim_{n\rightarrow \infty}\|C_{\Gamma}\left[\phi_n|_{\Gamma}\right]\|_{W^{1,p}(\Omega)}\leqslant M_1\lim_{n\rightarrow \infty} \|\phi_n\|_{W^{1,p}(\Omega)}=M_1\|u\|_{W^{1,p}(\Omega)}$.
	Hence $C_{\Gamma}\operatorname{tr}_{\Gamma}\in \mathcal{B}( W^{1,p}(\Omega))$. Finally, let $\mathcal{E}:W^{1,1-\frac{1}{p}}(\Gamma)\rightarrow W^{1,p}(\Omega)$ be a bounded extension operator satisfying $\operatorname{tr}_{\Gamma}\mathcal{E}=I_{\Gamma}$ \cite[pp. 102]{maclean}. Therefore $C_{\Gamma}= \left(C_{\Gamma} \operatorname{tr}_{\Gamma}\right)\mathcal{E}\in \mathcal{B}\left(W^{1,1-\frac{1}{p}}(\Gamma),W^{1,p}(\Omega)\right)$. 
\end{proof}

Let $G\subset \mathbb{C}$ be a bounded domain. We recall that the logarithmic potential in $G$ is given by
\[
L_Gu(x)=\int_G\mathfrak{G}(x-y)u(y)dy, \quad \mbox{where }\; \mathfrak{G}(x)=
	-\frac{1}{2\pi}\log|x|.
\]
It is known that $L_Gu\in W^{2,p}(G)$ for $u\in L_p(G)$, $1<p<\infty$, and $\bigtriangleup L_Gu=u$ (see \cite[Th. 9.9]{gilbargtrudinger}). We recall that $p'=\frac{p}{p-1}$.

\begin{proof}[Proof of Lemma \ref{lemaregularidadconductivyeq}]
	\begin{itemize}
		\item[(i)] It follows from the continuity of the bilinear form
		\begin{equation}\label{apendixaux1}
			W^{1,p}(\Omega)\times W_0^{1,p'}(\Omega)\ni (u,v)\mapsto \int_G \sigma \nabla u\cdot \nabla v\in \mathbb{C}.
		\end{equation}
		\item[(ii)] Given $u\in \operatorname{Sol}_{\sigma}^p(G)$, set $h=u+L_G[\vec{\sigma}\cdot \nabla u]$, where $\vec{\sigma}=\frac{\nabla \sigma}{\sigma}$. Since $\frac{\partial u}{\partial x_i}\in L_p(\Omega)$, $i=1,\dots, N$, $L_G[\vec{\sigma}\cdot \nabla u]\in W^{1,p}(G)$. Given $\varphi\in C_0^{\infty}(G)$, we get
		\begin{align*}
			\int_G\nabla h\cdot \nabla \varphi & = \int_G \nabla u\cdot \nabla \varphi+\int_G \nabla \left(L_G[\vec{\sigma}\cdot \nabla u]\right)\cdot \nabla \varphi \\
			&= \int_G \nabla u\cdot \nabla \varphi-\int_G \left(\bigtriangleup L_G[\vec{\sigma}\cdot \nabla u]\right) \varphi \\
			&= \int_G \nabla u\cdot\left( \nabla\varphi-\varphi\frac{\nabla \sigma}{\sigma}\right)
			=\int_G\sigma \nabla u\cdot \nabla \left(\dfrac{\varphi}{\sigma}\right).
		\end{align*}
		By the continuity of the bilinear form \eqref{apendixaux1} and the density of $C_0^{\infty}(G)$ in $W_0^{1,p'}(G)$, we have that $\int_G\sigma \nabla u\cdot \nabla v=0$ for all $v\in W_0^{1,p'}(G)$. Since $\frac{1}{\sigma}\in W^{1,\infty}(G)$, $\frac{\varphi}{\sigma}\in W_0^{1,p'}(G)$ and we obtain that $\int_G \nabla h\cdot \nabla \varphi=0$ for all $\varphi\in C_0^{\infty}(G)$. Thus, $h$ is a weak solution of the Laplace equation, and by the Weyl Lemma, $h\in \operatorname{Har}(G)$. Therefore $u=h-L_G[\vec{\sigma}\cdot \nabla u]\in W^{2,p}_{loc}(G)$.

		Now, take a disk $D\Subset G$ and  write $u|_D=h_D+L_D[\vec{\sigma}\cdot \nabla u|_D]$, with $h_D\in \operatorname{Har}(D)$. Since $\frac{\partial u}{\partial x_i}\in W^{1,p}(D)$,$i=1,\dots,N$, Remark \ref{remarksobolevembedding} yields:
		
		For $1<p<2$, $\vec{\sigma}\cdot \nabla u\in L_q(D)$ for all $p\leqslant q< p^*$. Consequently $L_G[\vec{\sigma}\cdot \nabla u|_D]\in W^{2,q}(D)$. Thus, $u|_D=h_D+L_D[\vec{\sigma}\cdot \nabla u|_D]\in W_{loc}^{2,q}(D)$. Since $D$ was arbitrary, $u\in W^{2,q}_{loc}(G)$ for all $p\leqslant q<p^*$.
		
		For $p\geqslant 2$, $\vec{\sigma}\cdot \nabla u\in L_q(D)$ for $p\leqslant q<\infty$. Hence $L_D[\vec{\sigma}\cdot \nabla u|_D]\in W^{2,q}(D)$. As in the previous case, since $D$ was arbitrary, we conclude that $u\in W^{2,q}_{loc}(G)$ for all $p\leqslant q<\infty$.

		\item[(iii)] According to \cite[Th. 6 of Subsec. 5.6.3]{evans}, for every disk $D\Subset G$ and all $r>2$, $W^{2,r}(D)\hookrightarrow C^{1,1-\frac{2}{r}}(\overline{D})$. Using the fact that $p^*>2$ for $1<p<2$ and point (ii), we deduce(iii).

		\item[(iv)] Let $D\Subset G$ be a disk. Consider another disk $D'$ such that $D\Subset D'\Subset G$, and let $r>2$ be such that $u|_{D'}\in W^{2,r}(D')$ for all $u\in \operatorname{Sol}_{\sigma}^p(G)$, and the embeddings $W^{1,p}(D')\hookrightarrow L_r(D')$ and $W^{2,r}(D)\hookrightarrow C^{1,1-\frac{2}{r}}(\overline{D})$ are continuous. Let $C_{D'}^1$ and $C_{D}^1$ be their respective norms.

		Thus, for every $u\in\operatorname{Sol}_{\sigma}(G)$, $u|_{D'}\in W^{2,r}(D')$ and satisfies the conductivity equation 
		\begin{equation}\label{appendixaux2}
			\sigma \bigtriangleup u+\nabla \sigma \cdot \nabla u=\operatorname{div}\sigma \nabla u=0 \quad \mbox{ a.e. in } D'.
		\end{equation}
		Note that $\sigma \in C(\overline{D'})$ (by \cite[Th. 9.16]{brezis}), $\frac{\partial \sigma }{\partial x_i}\in L_{\infty}(D')$, $i=1,\dots, N$, and the condition $\frac{1}{\sigma}\in L_{\infty}(G)$ implies that equation \eqref{appendixaux2} is strongly elliptic in $D'$. Thus,  there exits a constant $C_{D}^2$ which does not depend on $u$ such that 
		\[
		\|u|_{D}\|_{W^{2,r}(D)}\leqslant C_{D}^2\|u|_{D'}\|_{L_r(D')}
		\]
		(see \cite[Th. 9.11]{gilbargtrudinger} and the proof of Th. 1 in Sec. 6.3.1 of \cite{evans}). Consequently,
		\begin{align*}
			\|u|_D\|_{C^1(\overline{D})}&\leqslant C_{D}^1\|u|_D\|_{W^{2,r}(D)}\leqslant C_D^1C^2_D\|u|_{B'}\|_{L_r(D')}\\
			&\leqslant C_D^1C^2_DC_{D'}^1\|u|_{D'}\|_{W^{1,p}(D')}\leqslant C_D\|u\|_{W^{1,p}(G)},
		\end{align*}
		with $C_D=C_D^1C^2_DC_{D'}^1$. For a general subset $G\Subset \Omega$, taking disks $D_1,\dots, D_M$ with $G\subset \bigcup_{j=1}^{M}D_j\Subset G$ and $C_G= \max\limits_{1\leqslant j\leqslant M}C_{D_j}$ ,we obtain \eqref{lemaregularidadconductivyeq}.
	\end{itemize}
\end{proof}
\end{document}